\makeatletter\@addtoreset{equation}{section}\makeatother
\newtheorem{theorem}{Theorem}[section]
\newtheorem{corollary}[theorem]{Corollary}
\newtheorem{lemma}[theorem]{Lemma}
\newtheorem{proposition}[theorem]{Proposition}
\theoremstyle{remark}
\theoremstyle{remark}
\theoremstyle{remark}
\newtheorem{remark}[theorem]{Remark}
\newcommand{\cd}{\circledast}
\newcommand{\la}{\langle}
\newcommand{\ra}{\rangle}
\newcommand{\h}{\mathcal{H}}
\newcommand{\di}{\partial}
\newcommand{\R}{{\mathbb R}}
\newcommand{\Sym}{\operatorname{Sym}}
\newcommand{\id}{\operatorname{id}}
\newcommand{\SSym}{\mathbf{Sym}}
\begin{document}

\vspace{-20mm}
\begin{center}{\large \bf
An extended anyon Fock space and noncommutative Meixner-type orthogonal polynomials in infinite dimensions}\\[4mm]
{\it Dedicated to Professor Anatoly Moiseevich Vershik\\
on the occasion of his 80th birthday}
\end{center}

{\large Marek Bo\.zejko}\\
Instytut Matematyczny, Uniwersytet Wroc{\l}awski, Pl.\ Grunwaldzki 2/4, 50-384 Wroc{\l}aw, Poland; e-mail: \texttt{bozejko@math.uni.wroc.pl}\vspace{2mm}

{\large Eugene Lytvynov}\\ Department of Mathematics,
Swansea University, Singleton Park, Swansea SA2 8PP, U.K.;
e-mail: \texttt{e.lytvynov@swansea.ac.uk}\vspace{2mm}

{\large Irina Rodionova}\\ Department of Mathematics,
Swansea University, Singleton Park, Swansea SA2 8PP, U.K.;
e-mail: \texttt{i.rodionova@swansea.ac.uk}\vspace{2mm}


{\small

\begin{center}
{\bf Abstract}
\end{center}
\noindent Let $\nu$ be a finite measure on $\mathbb R$ whose Laplace transform is analytic in a neighborhood of zero. An anyon L\'evy white noise on $(\mathbb R^d,dx)$ is a certain family of noncommuting operators $\langle\omega,\varphi\rangle$ in the anyon Fock space over $L^2(\mathbb R^d\times\mathbb R,dx\otimes\nu)$. Here $\varphi=\varphi(x)$ runs over a space of test functions on $\mathbb R^d$, while $\omega=\omega(x)$ is interpreted as an operator-valued distribution on $\mathbb R^d$. Let $L^2(\tau)$ be the noncommutative $L^2$-space generated by the algebra of polynomials in variables $\langle \omega,\varphi\rangle$, where $\tau$ is  the vacuum expectation state. We construct noncommutative orthogonal polynomials in $L^2(\tau)$ of the form $\langle P_n(\omega),f^{(n)}\rangle$, where $f^{(n)}$ is a test function on $(\mathbb R^d)^n$. Using these orthogonal polynomials, we derive a unitary isomorphism $U$ between $L^2(\tau)$ and an extended anyon Fock space over $L^2(\mathbb R^d,dx)$, denoted by $\mathbf F(L^2(\mathbb R^d,dx))$. The usual anyon
 Fock space over $L^2(\mathbb R^d,dx)$,  denoted by $\mathcal F(L^2(\mathbb R^d,dx))$, is a subspace of $\mathbf F(L^2(\mathbb R^d,dx))$. Furthermore, we have the equality $\mathbf F(L^2(\mathbb R^d,dx))=\mathcal F(L^2(\mathbb R^d,dx))$ if and only if the measure $\nu$ is concentrated at one point, i.e., in the Gaussian/Poisson case. Using the unitary isomorphism $U$, we realize the operators $\langle \omega,\varphi\rangle$ as a Jacobi (i.e., tridiagonal) field in $\mathbf F(L^2(\mathbb R^d,dx))$. We derive a Meixner-type class of anyon L\'evy white noise for which the respective Jacobi field in $\mathbf F(L^2(\mathbb R^d,dx))$ has a relatively simple structure. Each anyon L\'evy white noise  of the Meixner type is characterized by  two parameters: $\lambda\in\mathbb R$ and $\eta\ge0$. Furthermore, we get the representation
$\omega(x)=\partial_x^\dag+\lambda \partial_x^\dag\partial_x +\eta\partial_x^\dag\partial_x\partial_x+\partial_x$.
Here $\partial_x$ and $\partial_x^\dag$ are   annihilation and creation operators at point $x$. } \vspace{2mm}

\newpage

\section{Meixner polynomials in infinite dimensions}

\subsection{Meixner class of orthogonal polynomials}

In  1934, Meixner \cite{Meixner} studied the following  problem. Consider complex-valued functions $u(z)$ and $\Phi(z)$ which can be expanded into a  power
series of $z\in\mathbb C$ in a neighborhood of zero and suppose that $u(0)=1$, $\Phi(0)=0$, and $\Phi'(0)=1$. Then the function
\begin{equation}\label{cyd6i}
 G(x,z)=\exp\big[x\Phi(z)\big]u(z)=\sum_{n=0}^\infty \frac{P_n(x)}{n!}\,z^n\end{equation}
generates a system of monic polynomials $P_n(x)$. Find all such polynomials which are orthogonal with respect to a probability measure $\mu$ on $\R$. Such polynomials are sometimes called  orthogonal polynomials with generating function of exponential type.  

Meixner \cite{Meixner} proved that a system of polynomials $P_n(x)$ belongs to this class if and only if it satisfies the recurrence relation
\begin{equation}\label{uyfu87t}
xP_n(x)=P_{n+1}(x)+(l+n\lambda)P_{n}(x)+n(k+\eta(n-1))P_{n-1}(x),\quad n\in\mathbb N_0,\end{equation}
where $l\in\R$, $k>0$, $\lambda\in\R$, $\eta\ge0$. 
For each choice of the parameters, the corresponding measure of orthogonality, $\mu$, is infinitely divisible. If $l=0$,  $\mu$ becomes centered, whereas $l\ne0$ corresponds to the shift of $\mu$ by $l$. For $l=0$ and $k\ne1$, the  measure $\mu$ is the $k$-th convolution power of the corresponding measure $\mu$ for $k=1$. 

One distinguishes five classes of polynomials satisfying \eqref{uyfu87t} (see \cite{Meixner, Chihara}):

(i) For $\lambda=\eta=0$, $\mu$ is a Gaussian measure,  $(P_n)_{n=0}^\infty$ is a system of Hermite polynomials.

(ii) For $\lambda\ne0$ and $\eta=0$, $\mu$ is similar to a Poisson distribution ($\mu$ being a real Poisson distribution  when $\lambda=1$ and  $l=1$),  $(P_n)_{n=0}^\infty$ is a system of Charlier polynomials.

(iii) For $|\lambda|=2$ and $\eta\ne0$, $\mu$ is a gamma distribution, 
$(P_n)_{n=0}^\infty$ is a system of Laguerre polynomials.

(iv) For $|\lambda|<2$ and $\eta\ne0$, $\mu$ is a Pascal (negative binomial) distribution, $(P_n)_{n=0}^\infty$ is a system of Meixner polynomials of the first kind. 

(v) For $|\lambda|>2$ and $\eta\ne0$, $\mu$ is a Meixner distribution, $(P_n)_{n=0}^\infty$ is a system of Meixner polynomials of the second kind, or Meixner--Polaczek polynomials.  

Note that, in each case, for $z$ from a neighborhood of zero in $\mathbb C$,
\begin{equation}\label{ftse5sw5}
 G(x,z)=\exp\big[x\Phi(z)-\mathcal C(\Phi(z))\big], \end{equation}
where $\mathcal C(z):=\log\left(\int_{\R}e^{xz}\,\mu(dx)\right)$ is the cumulant transform of $\mu$. We refer to \cite{Meixner, Chihara} for explicit formulas of $\Phi(z)$ and $\mathcal C(z)$. If one introduces complex parameters $\alpha,\beta\in\mathbb C$ such that   
$\alpha+\beta=-\lambda$ and $\alpha\beta=\eta$, using Taylor's expansion, one can write down explicit formulas for  $\Phi(z)$ and $\mathcal C(z)$ in a unique form for all parameters $\alpha$ and $\beta$, see \cite{Rodionova}. 

The two observations below will be crucial for our considerations. 
First, setting $l=0$ and $k=1$, we can rewrite formula \eqref{uyfu87t} as follows
\begin{equation}\label{txsjusxt} x=\di^\dag+\lambda \di^\dag\di +\di+\eta\di^\dag\di\di.\end{equation}
Here (with an abuse of notation) $x$ denotes the operator of multiplication by the variable $x$ in $L^2(\R,\mu)$, $\di^\dag$ is a creation (raising) operator: $\di^\dag P_n(x)=P_{n+1}(x)$,  and $\di$ is an annihilation (lowering) operator:  $\di P_n(x)=nP_{n-1}(x)$.

Second, Kolmogorov's representation of the Fourier transform of the infinitely divisible measure $\mu$ (with $l=0$) has the form  \cite{Schoutens,Schoutens_Teugels} 
$$\int_\R e^{iux}\,\mu(dx)=\exp\left[k\int_\R
(e^{ius}-1-ius)s^{-2}\,\nu(ds)\right],\quad u\in\R, $$
see also \cite{Grigelionis}. 
Here, for $\eta=0$ (Gaussian and Poisson cases), $\nu=\delta_\lambda$, the Dirac measure with mass at $\lambda$, whereas for $\eta\ne0$ (cases (iii)--(v)) $\nu$ is the probability measure on $\R$, whose system of monic orthogonal polynomials, $(p_n)_{n=0}^\infty$, satisfies the recurrence formula
\begin{equation}\label{noh9y}
sp_n(s)=p_{n+1}(s)+\lambda(n+1)s+\eta n(n+1)p_{n-1}(s).
\end{equation} 
In particular, $(p_n)_{n=0}^\infty$ is again a system of orthogonal polynomials from the Meixner class.

\subsection{An infinite dimensional extension}\label{fytry}

It appears that the Meixner class of orthogonal polynomials is fundamental for infinite dimensional analysis, in particular, for the theory of L\'evy white noise, see e.g.\ \cite{afs,Ly2,L2,Schoutens,TsVY}  and the references therein. Let 
$X:=\R^d$ and let 
$$\mathscr D(X)\subset L^2(X,dx)\subset\mathscr D'(X)$$ be a standard triple of spaces in which $\mathscr D(X)$ is the nuclear space of smooth, compactly supported functions on $X$ and
$\mathscr D'(X)$  is the dual space of $\mathscr D(X)$ with respect to zero space $L^2(X,dx)$.
For  $\omega\in\mathscr D'(X)$ and $\varphi\in\mathscr D(X)$, we denote by $\la \omega,\varphi\ra$ the dual pairing between   $\omega$ and $\varphi$. Let $\mu$ be 
a probability measure  on $\mathscr D'(X)$, and assume that  $\mu$ is a generalized stochastic process with independent values, in the sense of \cite{GV}, or using another terminology, a L\'evy white noise measure \cite{DOP}. 
We will assume that $\mu$ is centered and its Fourier transform has Kolmogorov's representation 
\begin{equation}\label{jig8yugtf8}\int_{\mathscr D'(X)}e^{i\la\omega,\varphi\ra}\mu(d\omega)=\exp\bigg[\int_{X}\int_\R \big(e^{is\varphi(x)}-1-is\varphi(x)\big)s^{-2}\nu(ds)\,dx\bigg],\quad \varphi\in \mathscr D(X),\end{equation}
where 
$\nu$ is a probability measure on $\R$ which satisfies:
\begin{equation}\label{ft7er7i57}\int_{\R}e^{\varepsilon|s|}\,\nu(ds)<\infty\quad \text{for some $\varepsilon>0$.}\end{equation} 
 Note that the measure $s^{-2}\nu(ds)$ on $\R\setminus\{0\}$ is called the L\'evy measure of $\mu$, while $\nu(\{0\})$  describes the Gaussian part of  $\mu$ (for $s=0$, the function under the integral sign in \eqref{jig8yugtf8} is equal to $-(1/2)\varphi^2(x)$).

In the case $d=1$, for each $t\ge0$, one can define by approximation in $L^2(\mathscr D'(X),\mu)$ a random variable $L_t(\omega)=\la\omega,\chi_{[0,t]}\ra$. Here $\chi_{[0,t]}$ denotes the indicator function of $[0,t]$. Then $(L_t)_{t\ge0}$ is a (version of a) L\'evy process with Kolmogorov measure $\nu$:
$$ \int_{\mathscr D'(X)}e^{iuL_t(\omega)}\,\mu(d\omega)=
\exp\left[t\int_\R (e^{ius}-1-ius)s^{-2}\nu(ds)\right].$$
Thus, the measure $\mu$ is indeed a L\'evy white noise.

Denote by $\mathscr {CP}$ the set of all continuous polynomials on $\mathscr D'(X)$, i.e., functions on $\mathscr D'(X)$ of the form
\begin{equation}\label{ytfdytdey7} f^{(0)}+\sum_{i=1}^n\la \omega^{\otimes i},f^{(i)}\ra,\quad \omega\in\mathscr D'(X),\ f^{(0)}\in\R,\ f^{(i)}\in \mathscr D(X)^{\otimes i},\ i=1,\dots,n,\ n\in\mathbb N.\end{equation}
If $f^{(n)}\ne0$, one says that the polynomial in \eqref{ytfdytdey7} has order $n.$
The set 
 $\mathscr{CP}$ is  dense in  $L^2(\mathscr D'(X),\mu)$.
So using the approach proposed by Skorohod \cite{Sko}, we may  
orthogonalize these polynomials. More precisely, we denote by $\mathscr {CP}_n$ the linear space of all continuous polynomials on $\mathscr D'(X)$  of order $\le n$.
Let $\mathscr{MP}_n$ denote the closure of $\mathscr {CP}_n$ in $L^2(\mathscr D'(X),\mu)$ (the set of measurable polynomials of order $\le n$). Let $\mathscr{OP}_n:=\mathscr{MP}_n\ominus \mathscr{MP}_{n-1}$, the set of orthogonalized polynomials on $\mathscr D'(X)$ of order $n$. 
We clearly have
\begin{equation}\label{gyut8t}
L^2(\mathscr D'(X),\mu)=\bigoplus_{n=0}^\infty \mathscr{OP}_n. \end{equation}
 
 \begin{remark}
 An alternative orthogonal decomposition of the $L^2$-space of a L\'evy process was derived by  Vershik and Tsilevich in \cite{VTs}.
 \end{remark}

 For each $f^{(n)}\in \mathscr D(X)^{\otimes n}$, we denote by
$\la P_n(\omega), f^{(n)}\ra$ the orthogonal projection of the continuous monomial $\la \omega^{\otimes n}, f^{(n)}\ra$ onto $\mathscr{OP}_n$.   We denote by  $\mathscr {OCP}$ the linear space of orthogonalized continuous polynomials, i.e., the space of finite sums of functions of the form
$\la P_n(\omega),f^{(n)}\ra$ and constants. It should be stressed that the function $\la P_n(\omega), f^{(n)}\ra$ does not necessarily belong to $\mathscr{CP}$. 

\begin{theorem}\label{ur7o67r6}
Let $\mu$ be a probability measure on $\mathscr D'(X)$ which has Fourier transform  \eqref{jig8yugtf8} with $\nu$ being a probability measure on $\mathbb R$ satisfying \eqref{ft7er7i57}. 
Then we have 
$$\mathscr{CP}=\mathscr {OCP}$$
 if and only if there exist  $\lambda\in\R$ and $\eta\ge0$ such that, if $\eta=0$ then $\nu=\delta_\lambda$, and if $\eta>0$ then 
 the system of monic polynomials $(p_n)_{n=0}^\infty$ which are orthogonal with respect to the measure $\nu$ satisfies the  recurrence formula
\eqref{noh9y} with $\lambda$ and $\eta$.
 \end{theorem}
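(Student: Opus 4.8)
The plan is to reduce the equality $\mathscr{CP}=\mathscr{OCP}$ to a tower of identities among the moments of $\nu$ and to recognise that tower as equivalent to the recurrence \eqref{noh9y}. \emph{Reduction to single test functions.} Since $\langle\omega^{\otimes n},f^{(n)}\rangle$ depends on $f^{(n)}$ only through its symmetrization, and the symmetric part of $\mathscr D(X)^{\otimes n}$ is spanned by the tensors $\varphi^{\otimes n}$, $\varphi\in\mathscr D(X)$, I would first establish
\begin{equation*}
\mathscr{CP}=\mathscr{OCP}\quad\Longleftrightarrow\quad Q_n(\varphi):=\langle P_n(\omega),\varphi^{\otimes n}\rangle\in\mathscr{CP}\ \text{ for all }n\in\mathbb N\text{ and }\varphi\in\mathscr D(X).
\end{equation*}
The implication ``$\Rightarrow$'' is immediate; for ``$\Leftarrow$'' one gets $\mathscr{OCP}\subseteq\mathscr{CP}$ at once, and $\mathscr{CP}\subseteq\mathscr{OCP}$ by induction on the polynomial order, writing $\langle\omega^{\otimes n},\varphi^{\otimes n}\rangle=Q_n(\varphi)+r$ with $r\in\mathscr{MP}_{n-1}$ and observing that $r$ is then a continuous polynomial of order $\le n-1$ (a continuous polynomial lying in $\mathscr{MP}_{n-1}$ has order $\le n-1$, since otherwise its projection onto some higher $\mathscr{OP}_m$ would be a nonzero element of the form $\langle P_m(\omega),b^{(m)}\rangle$).

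\emph{The Jacobi field and the diagonal obstruction.} Because $\langle\omega,\varphi\rangle\,\mathscr{MP}_k\subseteq\mathscr{MP}_{k+1}$ and multiplication by $\langle\omega,\varphi\rangle$ is symmetric on $L^2(\mathscr D'(X),\mu)$, this operator is tridiagonal with respect to the decomposition \eqref{gyut8t}: $\langle\omega,\varphi\rangle\,Q_n(\varphi)=Q_{n+1}(\varphi)+A_n(\varphi)+B_n(\varphi)$ with $A_n(\varphi)\in\mathscr{OP}_n$ and $B_n(\varphi)\in\mathscr{OP}_{n-1}$. I would then compute, for each $n$, the scalar product of $\mathscr{OP}_n$ at the level of kernels: for $f^{(n)},g^{(n)}\in\mathscr D(X)^{\otimes n}$ it equals a finite sum, indexed by the set partitions of $\{1,\dots,n\}$, of $L^2$-pairings of the corresponding partial diagonals of $f^{(n)}$ and $g^{(n)}$, a block of size $k$ being weighted by a polynomial in the central moments $\kappa_2,\kappa_3,\dots$ of $\nu$; for $n=2$ one gets $2\langle f^{(2)},g^{(2)}\rangle_{L^2(X^2)}+\kappa_2\langle f^{(2)}|_{\mathrm{diag}},g^{(2)}|_{\mathrm{diag}}\rangle_{L^2(X)}$, where $\kappa_2=\mathrm{Var}(\nu)$ and $f^{(2)}|_{\mathrm{diag}}(x)=f^{(2)}(x,x)$. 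Thus $\mathscr{OP}_n$ is isometric to an orthogonal sum of $L^2$-spaces over the partition diagonals of $X^n$, and a vector of this sum belongs to $\mathscr{OCP}_n:=\{\langle P_n(\omega),a^{(n)}\rangle:a^{(n)}\in\mathscr D(X)^{\otimes n}\}$ only if every one of its lower-diagonal components is the restriction to a diagonal of its top ($L^2(X^n)$) component. This is the only mechanism by which $Q_{n+1}(\varphi)$ can fail to be a continuous polynomial.

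\emph{The two implications.} For ``$\Leftarrow$'': if $\nu=\delta_\lambda$ (the case $\eta=0$), or the monic $\nu$-orthogonal polynomials satisfy \eqref{noh9y} (the case $\eta>0$), then — this is essentially classical, cf.\ \cite{Ly2,L2,Schoutens,TsVY} — one derives an explicit recurrence for $\langle\omega,\varphi\rangle\langle P_n(\omega),f^{(n)}\rangle$ in which the lower-order terms $A_n(\varphi),B_n(\varphi)$ arise from $f^{(n)}$ by pointwise multiplication by $\varphi$ and by restriction of $f^{(n)}$ to a partial diagonal times $\varphi$, i.e.\ by operations preserving $\mathscr D(X)^{\otimes\bullet}$; an induction on $n$ then gives $\mathscr{OCP}\subseteq\mathscr{CP}$, and with $\mathscr{CP}\subseteq\mathscr{OCP}$ (valid for any $\mu$) we obtain $\mathscr{CP}=\mathscr{OCP}$. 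For ``$\Rightarrow$'': assume $\mathscr{CP}=\mathscr{OCP}$. If $\kappa_2=0$ then $\nu=\delta_{m_1}$ and we are done with $\eta=0$, $\lambda=m_1$. If $\kappa_2>0$, I would induct on $n$: $Q_1(\varphi)$ and $Q_2(\varphi)$ lie in $\mathscr{CP}$ for every $\nu$, and, assuming that $p_0,\dots,p_n$ already obey the recurrence-coefficient pattern of \eqref{noh9y} (which determines $\lambda$ and $\eta$ in terms of the first two moments of $\nu$), one computes $A_n(\varphi)$ and uses the previous paragraph: the requirement $A_n(\varphi)\in\mathscr{CP}$ forces the first non-top diagonal component of the $\mathscr{OP}_n$-projection of $\langle\omega,\varphi\rangle Q_n(\varphi)$ to equal the diagonal restriction of its top component, and this identity is exactly the relation that makes the next recurrence coefficient of $(p_k)_{k\ge0}$ continue the pattern of \eqref{noh9y} — for $n=2$ it reads $\kappa_3=m_1\kappa_2$, equivalently $m_3=4m_1m_2-3m_1^3$. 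Since the resulting coefficients $\eta k(k+1)$ are strictly positive, $\nu$ has infinite support and \eqref{noh9y} holds for all $n$.

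\emph{Main obstacle.} The delicate part is the combinatorics behind the diagonal obstruction: computing the scalar product on $\mathscr{OP}_n$ explicitly enough in terms of the moments of $\nu$, separating the off-diagonal from the diagonal degrees of freedom so that the obstruction to $Q_{n+1}(\varphi)\in\mathscr{CP}$ is correctly localized, and checking that the induced tower of identities on $\kappa_2,\kappa_3,\dots$ is precisely \eqref{noh9y} — neither stronger nor weaker. The ``$\Leftarrow$'' direction, by contrast, stays close to known results on Meixner-type polynomials in infinite dimensions.
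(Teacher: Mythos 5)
Your plan is essentially the paper's own route: Theorem~\ref{ur7o67r6} is obtained there as the $q=1$ specialization of Theorem~\ref{urr8r}, whose proof runs through exactly the objects you describe --- the inner product on $\mathscr{OP}_n$ realized as a sum over the partition diagonals of $X^n$ with a block of size $k$ weighted by $c_k=a_0a_1\dotsm a_{k-1}=\|p_{k-1}\|^2_{L^2(\nu)}$, the tridiagonal action of multiplication by $\la\omega,\varphi\ra$, and an induction in which the requirement that the neutral and lowering parts carry test-function kernels to test-function kernels forces the full-diagonal values to be limits of the off-diagonal ones, yielding $b_k=\lambda(k+1)$ and $a_k=\eta k(k+1)$ (your $n=2$ identity $\kappa_3=m_1\kappa_2$ is the first instance, and for $q=1$ the one-constraint-per-step induction does close, whereas the paper's extra $\varepsilon$-elimination via the partition $\{\{1,2\},\{3,4,5\}\}$ is only needed for $q\ne1$). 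The one slip is the parenthetical claim that $\mathscr{CP}\subseteq\mathscr{OCP}$ holds for every $\mu$ --- it does not, but it follows from $\mathscr{OCP}\subseteq\mathscr{CP}$ by the order-induction in your first paragraph, so the argument is unharmed.
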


This theorem can be derived from the main result of \cite{BML}. It will also be a corollary of Theorem~\ref{urr8r} below. 

We define the generating function of the orthogonal polynomials by
$$G_\mu(\omega,\varphi):=\sum_{n=0}^\infty \frac1{n!}\la P_n(\omega),\varphi^{\otimes n}\ra,$$
and the cumulant transform of the measure $\mu$ by
$$\mathcal C_\mu(\varphi):=\log\left(\int_{\mathscr D'(X)}e^{\la\omega,\varphi\ra}\mu(d\omega)\right).$$  
The following theorem shows, in particular,  that formula  \eqref{ftse5sw5} admits an extension to infinite dimensions, see \cite{Ly2} for a proof.

\begin{theorem}\label{igt8t} Fix any $\lambda\in\R$ and $\eta\ge0$. 
Let $\mu$ be the  probability measure  on $\mathscr D'(X)$ 
which has Fourier transform  \eqref{jig8yugtf8} with $\nu$ being the probability measure on $\R$ corresponding to the parameters $\lambda$ and $\eta$ as in 
 Theorem~\ref{ur7o67r6}.
 Let $\mathcal C(\cdot)$ and $\Phi(\cdot)$ be the functions as in \eqref{ftse5sw5} for parameters $l=0$, $k=1$ and $\lambda$ and $\eta$ as above. Then
\begin{align*}
\mathcal C_\mu(\varphi)&=\int_{X}\mathcal C(\varphi(x))\,dx,\\
G_\mu(\omega,\varphi)&=\exp\left[
\la \omega,\Phi(\varphi)\ra-\int_{X}\mathcal C(\Phi(\varphi(x)))\,dx\right], 
\end{align*}
the formulas hold for $\varphi$ from (at least) a neighborhood of zero in $\mathscr D(X)$.
\end{theorem}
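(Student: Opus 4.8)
The plan is to reduce both identities to the one–dimensional Meixner theory, exploiting the product structure of the L\'evy white noise; I would establish the cumulant identity first and then bootstrap to the generating function. \emph{Step 1 (the cumulant transform).} First I would use \eqref{ft7er7i57} to show that $\psi\mapsto\int_{\mathscr D'(X)}e^{\la\omega,\psi\ra}\,\mu(d\omega)$ is finite and holomorphic on a neighborhood of zero in $\mathscr D(X)$: every such $\psi$ is supported in a fixed compact set and $\nu$ has the exponential moment \eqref{ft7er7i57}, so the integrand in \eqref{jig8yugtf8} is dominated, uniformly on such a neighborhood, by a fixed function in $L^1(dx\otimes\nu)$; hence \eqref{jig8yugtf8} extends analytically from $i\psi$ to $\psi$ and
\[
\mathcal C_\mu(\psi)=\int_X\int_\R\big(e^{s\psi(x)}-1-s\psi(x)\big)s^{-2}\,\nu(ds)\,dx.
\]
The same analytic continuation of the one–dimensional Kolmogorov representation recalled before \eqref{noh9y} (with $k=1$) identifies $\mathcal C(z)=\int_\R(e^{sz}-1-sz)s^{-2}\,\nu(ds)$ as the cumulant transform of the one–dimensional Meixner measure attached to $l=0$, $k=1$, $\lambda$, $\eta$, which by \eqref{ftse5sw5} (cf.\ \cite{Meixner,Chihara}) is precisely the function $\mathcal C$ in the statement. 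Substituting $z=\psi(x)$ and integrating over $X$ gives $\mathcal C_\mu(\psi)=\int_X\mathcal C(\psi(x))\,dx$ on a neighborhood of zero.

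\emph{Step 2 (a candidate generating function).} Since $\Phi$ is analytic near $0$ with $\Phi(0)=0$ and $\Phi'(0)=1$, for $\varphi$ in a small enough neighborhood of zero in $\mathscr D(X)$ the composition $\Phi(\varphi)$ again lies in $\mathscr D(X)$, so
\[
\widetilde G(\omega,\varphi):=\exp\Big[\la\omega,\Phi(\varphi)\ra-\int_X\mathcal C\big(\Phi(\varphi(x))\big)\,dx\Big]
\]
is well defined and, for each fixed $\omega\in\mathscr D'(X)$, analytic in $\varphi$ near $0$. Expanding the exponential and using $\Phi(\varphi)=\varphi+(\text{terms of order}\ge 2\text{ in }\varphi)$ together with $\mathcal C(\Phi(0))=(\mathcal C\circ\Phi)'(0)=0$, one checks that the part of $\widetilde G(\omega,\varphi)$ homogeneous of order $n$ in $\varphi$ is a \emph{continuous} polynomial in $\omega$ whose top-degree-in-$\omega$ part equals $\tfrac1{n!}\la\omega^{\otimes n},\varphi^{\otimes n}\ra$; write $\la\widetilde P_n(\omega),\varphi^{\otimes n}\ra$ for $n!$ times this part. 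Using Step 1 and $\Phi(\varphi)\in\mathscr D(X)$, for $\varphi,\psi$ near $0$,
\[
\int_{\mathscr D'(X)}\widetilde G(\omega,\varphi)\,e^{\la\omega,\psi\ra}\,\mu(d\omega)=\exp\Big[\int_X\big(\mathcal C(\Phi(\varphi(x))+\psi(x))-\mathcal C(\Phi(\varphi(x)))\big)\,dx\Big];
\]
in particular $\psi=0$ gives $\int_{\mathscr D'(X)}\widetilde G(\omega,\varphi)\,\mu(d\omega)=1$.

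\emph{Step 3 (orthogonality and identification).} In one dimension the Meixner polynomials $P_n$ are orthogonal with respect to their measure, so from $x^m=P_m(x)+(\text{lower degree})$ and $P_n\perp\{\text{polynomials of degree}<n\}$ the identity $\int_\R G(x,z)e^{xw}\,\mu(dx)=\exp[\mathcal C(\Phi(z)+w)-\mathcal C(\Phi(z))]$ shows that the $z^nw^m$-coefficient of this analytic function of $(z,w)$ vanishes whenever $m<n$. Expanding the right-hand side of the last display of Step 2 accordingly, every monomial occurring in $(\varphi,\psi)$ has $\psi$-degree $\ge$ $\varphi$-degree, so comparing with the expansion of the left-hand side yields $\int_{\mathscr D'(X)}\la\widetilde P_n(\omega),\varphi^{\otimes n}\ra\,\la\omega^{\otimes m},\psi^{\otimes m}\ra\,\mu(d\omega)=0$ for all $m<n$. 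By polarization and continuity in the test functions this means $\la\widetilde P_n(\omega),f^{(n)}\ra\perp\mathscr{MP}_{n-1}$, while Step 2 gives $\la\widetilde P_n(\omega),f^{(n)}\ra-\la\omega^{\otimes n},f^{(n)}\ra\in\mathscr{CP}_{n-1}$. Hence $\la\widetilde P_n(\omega),f^{(n)}\ra$ is exactly the orthogonal projection of $\la\omega^{\otimes n},f^{(n)}\ra$ onto $\mathscr{OP}_n$, i.e.\ $\widetilde P_n=P_n$. Summing the convergent Taylor expansion of $\widetilde G(\omega,\varphi)$ then gives $G_\mu(\omega,\varphi)=\widetilde G(\omega,\varphi)$, which is the second formula.

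The algebraic heart of the argument (Step 3) is just a transcription of the one–dimensional orthogonality relations; the genuine difficulty lies in the analytic bookkeeping of Steps 1 and 2 — establishing the holomorphy and the \emph{uniform} exponential-integrability bound for $\mu$ on an $\omega$-independent neighborhood of zero in the non-normable space $\mathscr D(X)$, and justifying the extraction of homogeneous components together with the interchanges of summation and integration. Once these estimates are in hand (as in \cite{Ly2}) the remaining steps reduce to matching Taylor coefficients.
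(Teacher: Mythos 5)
The paper does not actually prove Theorem~\ref{igt8t}: it defers entirely to \cite{Ly2}, where the identity for $G_\mu$ is obtained within the Jacobi-field/extended-Fock-space framework developed there (i.e., via the unitary onto the extended Fock space and the explicit tridiagonal action of $\la\omega,\varphi\ra$). Your argument takes a genuinely different and more self-contained route: you characterize $\la P_n(\omega),f^{(n)}\ra$ abstractly as the unique element of $\mathscr{MP}_n$ that differs from $\la\omega^{\otimes n},f^{(n)}\ra$ by an element of $\mathscr{MP}_{n-1}$ and is orthogonal to $\mathscr{MP}_{n-1}$, and you verify both properties for the homogeneous components of the candidate $\widetilde G$ by computing the joint Laplace transform $\int\widetilde G(\omega,\varphi)e^{\la\omega,\psi\ra}\,\mu(d\omega)$ and reducing its triangularity (vanishing of the bidegree-$(n,m)$ components for $m<n$) to the one-dimensional orthogonality $\int_\R P_n(x)x^m\,\mu(dx)=0$. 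This is sound; the one step you leave implicit is that the triangular support of $e^{L}$, $L(z,w)=\mathcal C(\Phi(z)+w)-\mathcal C(\Phi(z))$, passes to $L$ itself (via the $\log(1+u)$ expansion), hence to $\int_XL(\varphi(x),\psi(x))\,dx$, and back to its exponential. What your approach buys is independence from the Fock-space realization and from the explicit measures $m_\nu^{(n)}$; what it costs is that all the analytic burden sits in two places you only sketch: (a) finiteness and holomorphy of $\varphi\mapsto\int e^{\la\omega,\varphi\ra}\,\mu(d\omega)$ on a neighborhood of zero, where one should invoke the standard equivalence between exponential integrability of an infinitely divisible law and of its L\'evy measure (this is precisely where \eqref{ft7er7i57} enters), and (b) the interchange of the $\mu$-integral with the double Taylor expansion in $(\varphi,\psi)$ needed to isolate bihomogeneous components. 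Since you flag both and they are standard under \eqref{ft7er7i57}, I regard the proof as correct in outline rather than gappy.
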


In the case $\lambda=0$, $\eta=0$, $\mu$ is a Gaussian white noise measure. We refer to e.g.\ \cite{BK,DOP,HKPS} for Gaussian white noise analysis. 

In the case  $\lambda\ne0$ and  $\eta=0$,   $\mu$ is a Poisson random measure (or point process), see e.g.\ \cite{Kal}. 
We refer to \cite{VGG} for  a discussion of representations of the group of diffeomorphisms in the Poisson space, to 
\cite{IK} for Poisson white noise analysis, and to \cite{AKR} for Poisson analysis on the configuration space. 

For $\eta\ne 0$, the most important case of $\mu$ is when $\lambda=2$  and $\eta=1$. Then $\mu$ is the centered gamma measure. The gamma measure is concentrated on discrete Radon measure on $X$, $\sum_i s_i\delta_{x_i}$, such that the configuration of atoms, $\{x_i\}$, is a dense subset of $X$. A very important property of the gamma measure is that it is quasi-invariant with respect to a natural group of transformations  of the weights, $s_i$, see  \cite{TsVY} and the references therein. Furthermore, as shown in \cite{TsVY}, the gamma measure is the unique law of a measure-valued L\'evy process which has an equivalent $\sigma$-finite measure which is projective invariant with respect to the action of the group acting on the weights, $s_i$. This $\sigma$-finite measure is called in \cite{TsVY} the infinite dimensional Lebesgue measure, see also \cite{Vershik}. We also note that, in papers \cite{TsVY,VGG1,VGG2,VGG3}, the gamma measure was used in the representation theory of the group $SL(2,F)$, where $F$ is an algebra of functions on a manifold. White noise analysis related to the gamma measure was initiated  in \cite{KSSU}, and further developed in \cite{KL}. Gibbs perturbations of the gamma measure were constructed in 
\cite{HKPR}. A Laplace operator associated with the gamma measure was constructed and studied in \cite{HKLV}. 
Finally,  infinite dimensional analysis related to the case of a general    $\eta\ne0$ was studied in \cite{Ly1,Ly2}.

It is well known that, in the Gaussian and  Poisson cases ($\eta=0$), the decomposition of $L^2(\mathscr D'(X),\mu)$ in orthogonal polynomials yields the Wiener--It\^o--Segal isomorphism between $L^2(\mathscr D'(X),\mu)$ and the symmetric Fock space over $L^2(X,dx)$. (An alternative derivation of this result is achieved by using multiple stochastic integrals, see e.g.\ \cite{Surgailis} for the Poisson case.) This result admits the following extension, see \cite{KSSU,KL,Ly2}.   

\begin{theorem} \label{uyt8t8} 
Let $\lambda\in\R$ and $\eta\ge0$, and let $\mu$ be the corresponding probability measure on $\mathscr D'(X)$ as in  Theorem~\ref{igt8t}.

{\rm (i)}  For each $n\in\mathbb N$, there exists a measure $m_\nu^{(n)}$ on $X^n$ which satisfies
\begin{equation}\label{gfuyfrluf}\int_{\mathscr D'(X)}\la P_n(\omega), f^{(n)}\ra ^2\,\mu(d\omega)=\int_{X^n}(\operatorname{Sym}_n f^{(n)})^2\, dm_\nu^{(n)},\quad f^{(n)}\in\mathscr D(X)^{\otimes n}.\end{equation}
Here $\operatorname{Sym}_n f^{(n)}$ denotes the usual symmetrization of a function $f^{(n)}$. For $\eta=0$, $m_\nu^{(n)}=\frac1{n!}\,dx_1\dotsm dx_n$, for $\eta\ne0$ see subsec.~\ref{utf7r5svgy} below for the explicit construction of $m_\nu^{(n)}$.

{\rm (ii)} We define a Hilbert space
\begin{equation}\label{urtur}\mathbf F_{\mathrm{sym}}(L^2(X,dx),\nu):=\R\oplus\bigoplus_{n=1}^\infty L^2_{\mathrm{sym}}(X^n,m_\nu^{(n)}),\end{equation}
where $L^2_{\mathrm{sym}}(X^n,m_\nu^{(n)})$ is the subspace of $L^2(X^n,m_\nu^{(n)})$ consisting of all symmetric functions from this space. For $\eta=0$, $\mathbf F_{\mathrm{sym}}(L^2(X,dx),\nu)$ is the symmetric Fock space over 
$L^2(X,dx)$. For $\eta\ne0$, $\mathbf F_{\mathrm{sym}}(L^2(X,dx),\nu)$ contains the symmetric Fock space as a proper subspace. We then call $\mathbf F_{\mathrm{sym}}(L^2(X,dx),\nu)$ an extended symmetric Fock space. The mapping
\begin{equation}\label{huhgiuyfg}
f^{(0)}+\sum_{i=i}^n\la P_i(\omega),f^{(i)}\ra\mapsto(f^{(0)},\,\Sym_1f^{(1)},\dots,\,\Sym_nf^{(n)},0,0\dots)\in\mathbf F_{\mathrm{sym}}(L^2(X,dx),\nu)\end{equation}
extends by continuity to a unitary operator
$ U:L^2(\mathscr D'(X),\mu)\mapsto \mathbf F_{\mathrm{sym}}(L^2(X,dx),\nu)$.

{\rm (iii)} For each $\varphi\in\mathscr D(X)$, we keep the notation
$\la \omega,\varphi\ra$ for the image of the operator of multiplication by the monomial $\la \omega,\varphi\ra$ in $L^2(\mathscr D'(X),\mu)$ under the unitary operator $U$.
Then, analogously to \eqref{txsjusxt}, we have the following representation of the operator $\la \omega,\varphi\ra$ realized in the (extended) symmetric Fock space $\mathbf F_{\mathrm{sym}}(L^2(X,dx),\nu)$:
\begin{equation}\label{yufru7rf}\la \omega,\varphi\ra=\int_{X}dx\,\varphi(x)
(\di_x^\dag+\lambda \di_x^\dag\di_x +\di_x+\eta\di_x^\dag\di_x\di_x).\end{equation}
Here $\di_x$ is the annihilation operator at point $x$:
\begin{equation}\label{kgiygti9}(\di_x f^{(n)})(x_1,\dots,x_{n-1}):=n f^{(n)}(x,x_1,\dots,x_{n-1}),\end{equation}
and $\di_x^\dag$ is the creation operator at point $x$, satisfying
\begin{equation}\label{ufgutfr8ub}\int_{X}dx\,\varphi(x)\di_x^\dag\, f^{(n)}:=\operatorname{Sym}_{n+1}(\varphi\otimes f^{(n)}),\end{equation}
see \cite{Ly2} for further details.
\end{theorem}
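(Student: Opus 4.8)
The plan is to deduce all three parts from the single explicit formula for the generating function $G_\mu(\omega,\varphi)$ furnished by Theorem~\ref{igt8t}; for $\eta=0$ part~(i) is the classical Gaussian (Wiener--It\^o) resp.\ Poisson (Charlier) computation, so the real work is the case $\eta\ne0$. \emph{Step 1 --- the $n$-particle scalar products, giving (i).} I would first combine the formula $G_\mu(\omega,\varphi)=\exp[\la\omega,\Phi(\varphi)\ra]\,\exp[-\int_X\mathcal C(\Phi(\varphi(x)))\,dx]$ of Theorem~\ref{igt8t} with the Laplace-transform identity $\int_{\mathscr D'(X)}e^{\la\omega,\zeta\ra}\mu(d\omega)=\exp[\int_X\mathcal C(\zeta(x))\,dx]$ and the mutual orthogonality of the $\la P_n(\omega),\varphi^{\otimes n}\ra$, obtaining, for $\varphi,\psi$ in a neighbourhood of $0$ in $\mathscr D(X)$,
\[
\sum_{n=0}^\infty\frac1{(n!)^2}\int_{\mathscr D'(X)}\la P_n(\omega),\varphi^{\otimes n}\ra\,\la P_n(\omega),\psi^{\otimes n}\ra\,\mu(d\omega)=\exp\Big[\int_X\mathcal K(\varphi(x),\psi(x))\,dx\Big],
\]
where $\mathcal K(z_1,z_2):=\mathcal C(\Phi(z_1)+\Phi(z_2))-\mathcal C(\Phi(z_1))-\mathcal C(\Phi(z_2))$ vanishes to second order at the origin (as $\mu$ is centred, $\mathcal C'(0)=0$). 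Expanding the right-hand side in a double power series and comparing the coefficients of bidegree $(n,n)$ identifies the bilinear form $(f^{(n)},g^{(n)})\mapsto\int\la P_n(\omega),f^{(n)}\ra\la P_n(\omega),g^{(n)}\ra\,\mu(d\omega)$ on $\mathscr D(X)^{\otimes n}$: it depends on $f^{(n)},g^{(n)}$ only through $\Sym_nf^{(n)}$ and $\Sym_ng^{(n)}$, and, since the monomials $\varphi^k\psi^l$ occurring in $\mathcal K$ are integrated at coinciding points of $X$, it is a finite sum, indexed by set partitions of $\{1,\dots,n\}$, of integrals over the associated partial diagonals of $X^n$, a block of size $b$ carrying a weight read off from the Taylor coefficients of $\mathcal C\circ\Phi$ --- equivalently, from the moments of $\nu$, i.e.\ from the recurrence \eqref{noh9y}. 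I would then verify that this positive form is exactly $\int_{X^n}(\Sym_nf^{(n)})(\Sym_ng^{(n)})\,dm_\nu^{(n)}$ for the measure $m_\nu^{(n)}$ constructed in subsec.~\ref{utf7r5svgy} (which collapses to a multiple of $dx_1\dotsm dx_n$ when $\eta=0$, because then $\mathcal K(z_1,z_2)=z_1z_2$); this is (i).

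\emph{Step 2 --- the isomorphism, giving (ii).} Granting (i), the assignment $\la P_n(\omega),f^{(n)}\ra\mapsto\Sym_nf^{(n)}$ is well defined and isometric from the span of such vectors, which is dense in $\mathscr{OP}_n$, into $L^2_{\mathrm{sym}}(X^n,m_\nu^{(n)})$ by \eqref{gfuyfrluf}; its range is dense because $\Sym_n(\mathscr D(X)^{\otimes n})$ is dense in $L^2_{\mathrm{sym}}(X^n,m_\nu^{(n)})$; so it extends to a unitary $\mathscr{OP}_n\to L^2_{\mathrm{sym}}(X^n,m_\nu^{(n)})$, and I would take $U$ to be the orthogonal sum over $n$ using \eqref{gyut8t} and \eqref{urtur}. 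Note for later that $U$ sends $G_\mu(\omega,\varphi)$ to the exponential vector $\mathbf e(\varphi):=\big(\frac1{n!}\Sym_n\varphi^{\otimes n}\big)_{n\ge0}$ and that the family $\{\mathbf e(\varphi)\}$, $\varphi$ near $0$, is total in $\mathbf F_{\mathrm{sym}}(L^2(X,dx),\nu)$ (because $\{G_\mu(\omega,\varphi)\}$ is total in $L^2(\mathscr D'(X),\mu)$, by polarization of $\varphi^{\otimes n}$).

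\emph{Step 3 --- the Jacobi field, giving (iii).} Differentiating \eqref{cyd6i}--\eqref{ftse5sw5} in $z$ and using the recurrence \eqref{uyfu87t}--\eqref{txsjusxt} I would first record the one-dimensional Meixner identities $1/\Phi'(z)=1+\lambda z+\eta z^2$ and $\mathcal C'(\Phi(z))=z$. Using these and the explicit $G_\mu$, computing the directional derivative of $G_\mu(\omega,\cdot)$ should yield, for $\psi\in\mathscr D(X)$ and $\varphi$ near $0$,
\[
\la\omega,\psi\ra\,G_\mu(\omega,\varphi)=\frac{d}{dt}\Big|_{t=0}G_\mu\big(\omega,\varphi+t\,\psi(1+\lambda\varphi+\eta\varphi^2)\big)+\Big(\int_X\varphi(x)\psi(x)\,dx\Big)G_\mu(\omega,\varphi),
\]
where $\psi(1+\lambda\varphi+\eta\varphi^2)$ denotes the function $x\mapsto\psi(x)(1+\lambda\varphi(x)+\eta\varphi(x)^2)$. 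Applying $U$, using $UG_\mu(\omega,\varphi)=\mathbf e(\varphi)$, the identity $\frac{d}{dt}|_{t=0}\mathbf e(\varphi+t\chi)=\di^\dag(\chi)\mathbf e(\varphi)$ with $\di^\dag(\chi):=\int_X\chi(x)\di_x^\dag\,dx$, the linearity of $\di^\dag(\cdot)$, and the relation $\di_x\mathbf e(\varphi)=\varphi(x)\mathbf e(\varphi)$ that follows from \eqref{kgiygti9}, the right-hand side becomes $\big(\int_X\psi(x)(\di_x^\dag+\lambda\di_x^\dag\di_x+\di_x+\eta\di_x^\dag\di_x\di_x)\,dx\big)\mathbf e(\varphi)$; that is, $\la\omega,\psi\ra$ and the operator in \eqref{yufru7rf} have the same action on every $\mathbf e(\varphi)$. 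Since these vectors span a common dense core on which both operators are well defined and symmetric, \eqref{yufru7rf} follows, which is (iii).

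\emph{Main obstacle.} I expect the only nontrivial point to be in Step~1: showing that the positive $n$-particle bilinear form is representable as integration against an honest positive measure $m_\nu^{(n)}$ on $X^n$ with the correct partial-diagonal structure, together with the density of $\Sym_n(\mathscr D(X)^{\otimes n})$ in $L^2_{\mathrm{sym}}(X^n,m_\nu^{(n)})$ needed for (ii). This is precisely where the Meixner form of $\nu$ --- the recurrence \eqref{noh9y}, cf.\ Theorem~\ref{ur7o67r6} --- is used: for a general $\nu$ the chaos spaces need not be $L^2$-spaces of ordinary functions on $X^n$ at all. Once Step~1 is in place, Steps~2 and~3 are essentially formal.
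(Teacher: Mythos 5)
Your outline is correct, and it is essentially the generating-function argument of the reference \cite{Ly2} that the paper cites for this theorem. The paper itself does not reprove Theorem \ref{uyt8t8}: it obtains the general-$q$ analogue (Theorems \ref{utu8}, \ref{fu7r7}, \ref{hfu8fr78}) by a genuinely different route, which for $q=1$ specializes to the present statement. There the starting point is the realization $\la\omega,\varphi\ra=a^+(\varphi\otimes 1)+a^0(\varphi\otimes\id)+a^-(\varphi\otimes 1)$ in the Fock space over $L^2(X\times\R,dx\otimes\nu)$; the orthogonal projections are identified through the Nualart--Schoutens-type decomposition of Proposition \ref{ui8t868} and the explicit partition formula $I\la P_n(\omega),f^{(n)}\ra=\Sym\sum_{\theta\in\Pi(n)}\mathcal E_\theta f^{(n)}$ of Proposition \ref{iyr867r}, after which \eqref{gfuyfrluf} and \eqref{yufru7rf} follow from direct computation of inner products of the $\mathcal E_\theta f^{(n)}$ using $\int_{\R}p_{k-1}^2\,d\nu=c_k$. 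That route needs no generating function (indispensable for $q\ne1$, where $G_\mu$ is unknown) and makes the partial-diagonal structure of $m_\nu^{(n)}$ emerge directly from partition combinatorics; yours is shorter in the commutative case but hinges on the one identity you rightly single out as the main obstacle, namely that $\mathcal K(z_1,z_2)=\mathcal C(\Phi(z_1)+\Phi(z_2))-\mathcal C(\Phi(z_1))-\mathcal C(\Phi(z_2))$ is a power series in the single variable $z_1z_2$, with $n$-th coefficient matching the weights $c_{|\theta_1|}\dotsm c_{|\theta_l|}\,n!\,(|\theta_1|!\dotsm|\theta_l|!)^{-1}$ of subsec.~\ref{utf7r5svgy} (e.g.\ $\mathcal K=-\log(1-z_1z_2)$ in the gamma case). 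This diagonality must be proved, not just asserted: a cross term $\kappa_{kl}$ with $k\ne l$ would produce, already at bidegree $(2,2)$, a contribution $\kappa_{12}\kappa_{21}\int_X\varphi\psi^2\,dx\int_X\varphi^2\psi\,dy$ which cannot be written as integration of $(\Sym_2 f^{(2)})(\Sym_2 g^{(2)})$ against any measure on $X^2$, so it is exactly here that the Meixner form of $\Phi$ and $\mathcal C$ enters. Two smaller points to make explicit: in Step~2, surjectivity requires that the tuples of restrictions of $\Sym_n(\mathscr D(X)^{\otimes n})$ to the various partial diagonals be \emph{jointly} dense, which works because each diagonal is null for the Lebesgue measure carried by the others; and in Step~3, the passage from agreement on exponential vectors to the operator identity \eqref{yufru7rf} is cleanest if you expand both sides of your exponential-vector identity in homogeneous components of $\varphi$, which simultaneously shows that both sides act on finite vectors and have the stated tridiagonal structure.
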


Note that, in view of formula \eqref{yufru7rf}, we may heuristically write
\begin{equation}\label{ytr765r}
\omega(x)=\di_x^\dag+\lambda \di_x^\dag\di_x +\di_x+\eta\di_x^\dag\di_x\di_x.\end{equation}

As follows from Theorem \ref{uyt8t8}, (iii), the operators 
$\la \omega,\varphi\ra$ realized in $\mathbf F_{\mathrm{sym}}(L^2(X,dx),\nu)$ form a Jacobi field, i.e., 
 they have a tridiagonal structure; compare with e.g.\ \cite{B,BML,Br1,Br2,Ly1}.

\subsection{A noncommutative extension for anyons --- an introduction}

The above discussed results 
 have  noncommutative analogs in the framework of free probability \cite{BL1,BL2}, see also \cite{a2,a5,bb,bd,Biane} and the references therein. See also 
 \cite{BBLS,BH} for further connections between the classical distributions from the Meixner class and free probability.  

However, in this paper, we will be interested in a noncommutative extension of Meixner polynomials for  a so-called anyon statistics \cite{LM,GM,GS}, see also \cite{Bozejko}. The latter statistics, indexed by a complex number $q$ of modulus one, 
forms a continuous bridge between the boson statistics ($q=1$) and the fermi statistics ($q=-1$). One of the main aims of the present paper is to show that, in the anyon  setting, one  naturally arrives at noncommutative Meixner-type polynomials which have a representation like in \eqref{yufru7rf}.

In fact, one could think that it was hopeless to expect
a counterpart of formula \eqref{yufru7rf} in the fermion setting. Indeed,  if the operators $\di_x$ and $\di_y$ anticommute,
i.e.,  $\di_x\di_y=-\di_y\di_x$,
then  $\di_x\di_x=0$, so that the term $\eta\di_x^\dag\di_x\di_x$ must be equal to zero. However, we do show that, even in the fermion setting, the integral $\int_{X}dx\,\varphi(x)\di_x^\dag\di_x\di_x$ leads to a well-defined, nontrivial operator in an extended antisymmetric  Fock space $\mathbf F_{\mathrm{as}}(L^2(X,dx),\nu)$. The latter space contains the usual
antisymmetric (fermion)  Fock space  $\mathcal F_{\mathrm{as}}(L^2(X,dx))$ as a  subspace. On the space $\mathcal F_{\mathrm{as}}(L^2(X,dx))$, the operators $\di_x$ and $\di_y$ indeed anticommute. However, this anticommutaion fails on the whole  space   $\mathbf F_{\mathrm{as}}(L^2(X,dx),\nu)$. As a result, the extended antisymmetric Fock space leads to a proper renormalization (rather a nontrivial extension) of the operators $\di_x$ and $\di^\dag_x$.

Our discussion of this noncommutative extension is organized as follows. In Section~\ref{uiy976}, following \cite{GM,BLW,LM}, we  briefly recall 
the construction of the
anyon Fock space,  standard operators on them, and the anyon commutation relations. We  also recall the construction of a L\'evy white noise for anyon statistics as a family of noncommutative self-adjoint operators $\la\omega,\varphi\ra$ in the anyon Fock space over $L^2(X\times\R,dx\,\nu(ds))$, see \cite{BLW} for details. Note that, in this section, we do not explain why the `increments' of this process can  be understood as being `anyon independent.' For this, we refer the reader to \cite{BLW}. We  only note that in the commutative, boson setting ($q=1$), we  indeed recover a classical L\'evy white noise, being realized as a family of commuting self-adjoint operators in the symmetric Fock space over $L^2(X\times\R,dx\,\nu(ds))$.

In Section~\ref{yre6tr5}, we   formulate the main results of the paper. In particular, starting with a space $\mathscr{CP}$ of noncommutative continuous  polynomials of anyon white noise, we  construct 
a space $\mathscr{OCP}$ of orthogonalized 
continuous polynomials. By analogy with \eqref{gfuyfrluf}, for each $n\in\mathbb N$, we  construct a measure $m_\nu^{(n)}$ on $X^n$ and find the corresponding symmetrization operator $\operatorname{Sym}_n$. This symmetry extends the anyon symmetry (in particular, the fermion symmetry) in a  non-trivial way. By analogy with   \eqref{urtur}, we  define an extended anyon Fock space, and then by analogy with \eqref{huhgiuyfg}, we  construct a unitary operator  $U$ between the noncommutative $L^2$-space and the extended anyon Fock space. Under the unitary $U$, each operator $\la\omega,\varphi\ra$ takes a Jacobi  form in the extended anyon Fock space. We  show that this Jacobi field has the simplest form (in a sense) when  $\nu$ is the same measure as in Theorem~\ref{ur7o67r6},   i.e., $\nu$ is Kolmogorov's measure  of a white noise measure $\mu$ from the Meixner class. Furthermore, in this  case, analogs of formulas \eqref{yufru7rf}--\eqref{ufgutfr8ub} hold.

Finally, Section~\ref{ur867} is devoted to the proofs of the main results.

Among numerous open problems regarding  the anyon Meixner-type  white noise, let us mention only  two:

(i) In both the classical and free cases, the generating functions of the Meixner-type orthogonal polynomials are explicitly known and play an important role in the studies of these polynomials. In the anyon case, the form of the generating function is not yet known, even in the Gaussian case. The main difficulty lies in the fact that both the classical and  free Meixner-type polynomials have corresponding systems of orthogonal polynomials on the real line. However, the anyon case is purely infinite dimensional and has no related one-dimensional theory.

(ii) As shown in   \cite{afs}, in the classical case,
the   L\'evy processes from the Meixner class with $\eta>0$ are related to the renormalized squares of boson white noise. Is it possible to interpret anyon Meixner-type  white noises as those related to renormalized squares of anyon white noise?

\section{Noncommutative L\'evy white noise for anyon statistics}\label{uiy976}
\subsection{Anyon Fock space and anyon commutation relations}
\label{hfgyufzua}

Let  $\mathcal B(X)$ denote the Borel $\sigma$-algebra on $X$, and let $\mathcal B_0(X)$ denote the family of  all sets from  $\mathcal B(X)$ which have compact closure. Let $m=m(dx)=dx$ denote  the Lebesgue measure on $(X,\mathcal B(X))$.

For each $n\ge2$, we define 
\begin{equation}\label{ydr5w54} X^{(n)}:=\big\{(x_1,\dots,x_n)\in X^n\mid \forall 1\le i<j\le n:\  x_i\ne x_{j}\big\}.\end{equation}
Since the measure $m$ is non-atomic,
\begin{equation}\label{rtsew6uwy}m^{\otimes n}(X\setminus X^{(n)})=0.\end{equation}

 We introduce a strict total order on $X$ as follows, for any $x=(x^1,\dots,x^d),y=(y^1,\dots,y^d)\in X$, $x\ne y$, we set $x<y$ if for some $j\in\{1,\dots,d\}$, we have $x^1=y^1$,\dots, $x^{j-1}=y^{j-1}$ and $x^j<y^j$.

We  fix a number $q\in\mathbb C$ with $|q|=1$, and define a function $Q:X^{(2)}\to\mathbb C$ as follows:
$$ Q(x,y)=\begin{cases}
q,&\text{if }x<y,\\
\bar q,&\text{if }y<x.
\end{cases}$$
Note that the function $Q$ is Hermitian:
\[
Q(x, y) = \overline{Q(y, x)},\quad (x, y)\in X^{(2)}.
 \]
A function $f^{(n)}:X^{(n)}\to\mathbb C$ ($n\ge2$) is called $Q$-symmetric if, for each $j=1,\dots,n-1$,
\begin{equation}\label{uyr57eses} f^{(n)}(x_1,\dots,x_n)=Q(x_j,x_{j+1})f^{(n)}(x_1,\dots,x_{j-1},x_{j+1},x_{j},x_{j+2},\dots,x_n).\end{equation}

 Let $\mathcal H:=L^2(X,m)$ be the  Hilbert space of all complex-valued, square-integrable  functions on $X$. Thus, for each $n\in\mathbb N$, $\mathcal H^{\otimes n}=L^2(X^n,m^{\otimes n})$. 
 In view of \eqref{rtsew6uwy}, we have $\mathcal H^{\otimes n}=L^2(X^{(n)},m^{\otimes n})$.  
 We define a complex Hilbert space  $\mathcal H^{\cd n}$ as the (closed) subspace of   $\mathcal H^{\otimes n}$ consisting of all ($m^{\otimes n}$-versions of) $Q$-symmetric functions in $\mathcal H^{\otimes n}$. Let  $\Sym_n$ denote the orthogonal projection of $\mathcal H^{\otimes n}$ onto $\mathcal H^{\cd n}$. This operator has the following explicit form: for each $f^{(n)}\in\mathcal H^{\otimes n}$,
\begin{multline}\label{oitr8o} (\Sym_nf^{(n)})(x_1,\dots,x_n)\\=\frac1{n!}\sum_{\pi\in \mathfrak S_n}
Q_\pi(x_1,\dots,x_n)f^{(n)}(x_{\pi^{-1}(1)},\dots,x_{\pi^{-1}(n)}),\quad (x_1,\dots,x_n)\in X^{(n)}.\end{multline}
Here $\mathfrak S_n$ denotes the group of all permutations of $1,\dots,n$
and
\begin{equation}\label{y7e57ie} Q_\pi(x_1,\dots,x_n):=\prod_{\substack{1\le i<j\le n\\ \pi(i)>\pi(j)}} Q(x_{i},x_j),\quad (x_1,\dots,x_n)\in X^{(n)}.\end{equation}

We can now define a $Q$-symmetric tensor product $\cd$. For any $m,n\in\mathbb N$ and any $f^{(m)}\in\mathcal H^{\cd m}$ and $g^{(n)}\in\mathcal H^{\cd n}$, we set
$f^{(m)}\cd g^{(n)}:=\Sym_{m+n}(f^{(m)}\otimes g^{(n)})$. Note that this tensor product is associative. Note also that, for $q=1$, $\cd$ is the usual symmetric tensor product, while for $q=-1$, $\cd$ is the usual antisymmetric tensor product.

We define an anyon   Fock space by
$$\mathcal F^Q(\mathcal H):=\bigoplus_{n=0}^\infty \mathcal H^{\cd n}n!\, .$$
Thus, $\mathcal F^Q(\mathcal H)$ is the Hilbert space which consists of all sequences $F=(f^{(0)}, f^{(1)},f^{(2)},\dots)$ with $f^{(n)}\in  \mathcal H^{\cd n}$ ($\mathcal H^{\cd 0}:=\mathbb C$) satisfying
$$\|F\|^2_{\mathcal F^Q(\mathcal H)}:=\sum_{n=0}^\infty\|f^{(n)}\|^2_{\mathcal H^{\cd n}}n!<\infty.$$
(The inner product in $\mathcal F^Q(\mathcal H)$ is  induced by the norm in this space.)
The vector $\Omega:=(1,0,0,\dots)\in \mathcal F^Q(\mathcal H)$ is called the vacuum.
We  denote by $\mathcal F_{\mathrm{fin}}^Q(\mathcal H)$ the subspace of $\mathcal F^Q(\mathcal H)$ consisting of all finite sequences $$F=(f^{(0)},f^{(1)},\dots,f^{(n)},0,0,\dots)$$ in which $f^{(i)}\in \mathcal H^{\cd i}$ for $i=0,1,\dots,n$, $n\in\mathbb N$. This space can be endowed with the topology of the topological direct sum of the $\mathcal H^{\cd n}$ spaces. Thus, convergence in $\mathcal F_{\mathrm{fin}}^Q(\mathcal H)$ means uniform finiteness of non-zero components and coordinate-wise convergence in $\mathcal H^{\cd n}$.

For each $h\in\mathcal H$, we define a  creation operator $a^+(h)$ and an annihilation operator $a^-(h)$ as the linear operators acting on $\mathcal F_{\mathrm{fin}}^Q(\mathcal H)$ given by
$$
a^{+}(h)f^{(n)} := h \cd f^{(n)},\quad f^{(n)}\in \mathcal H^{\cd n},\quad a^{-}(h):= a^{+}(h)^*\restriction _{\mathcal F_{\mathrm{fin}}^Q(\mathcal H)}.
$$
Both $a^+(h)$ and $a^-(h)$ act continuously on $\mathcal F_{\mathrm{fin}}^Q(\mathcal H)$. In fact, for any $h\in\mathcal H$ and $f^{(n)}\in\mathcal H^{\cd n}$, we have
\begin{align}
& (a^+(h)f^{(n)})(x_1,\dots,x_{n+1})=\frac1{n+1}\Big[
h(x_1)f^{(n)}(x_2,\dots,x_{n+1})\notag\\
&\quad +\sum_{k=2}^{n+1}Q(x_1,x_k)Q(x_2,x_k)\dotsm Q(x_{k-1},x_k)h(x_k)f^{(n)}(x_1,\dots,x_{k-1},x_{k+1},\dots,x_{n+1})\Big],\notag\\
& (a^-(h)f^{(n)})(x_1, \dots , x_{n-1}) =
 n\int_{X}\overline{h(y)}\,f^{(n)}(y, x_1, \dots ,
x_{n-1})\,dy.\label{ufcu}
\end{align}
The action of the annihilation operator can also be written in the following form: for any $h\in\mathcal H$ and $f^{(n)}\in\mathcal H^{\otimes n}$,
\begin{multline} (a^-(h) \Sym_n f^{(n)})(x_1,\dots,x_{n-1})
=\Sym_{n-1}\bigg(\int_X\overline{h(y)}\bigg[\sum_{k=1}^n
Q(y,x_1)Q(y,x_2)\\
\times\dotsm \times Q(y,x_{k-1}) f^{(n)}
(x_1,x_2,\dots,x_{k-1},y,x_k,\dots,x_{n-1})
\bigg]dy\bigg).\label{hyfd7urf}\end{multline}

Let us now discuss the creation and annihilation operators at points of the space $X$.
At least informally, for each $x\in X$, we may consider a delta function at $x$, denoted by $\delta_x$.
Then we can heuristically define $\partial_x^\dag:=a^+(\delta_x)$ and $\partial_x:=a^-(\delta_x)$, so that
\begin{equation}\label{bgyiugtf8utfg}
\partial_x^\dag f^{(n)}=\delta_x\cd f^{(n)},\quad \partial_x f^{(n)}:=nf^{(n)}(x,\cdot).\end{equation}
Thus,
\begin{equation}\label{out979p}
a^+(h):=\int_X dx\, h(x)\partial_x^\dag\,,\quad
a^-(h)=\int_X  dx\, \overline{h(x)}\,\partial_x\,.
\end{equation}
Note that the second formula in  \eqref{bgyiugtf8utfg} is a rigorous definition of $\di_x$ (for $m$-a.a.\ $x\in X$), while the first formula in \eqref{out979p} is  the rigorous definition of the integral $\int_X dx\, h(x)\partial_x^\dag$.

 Let $B_0(X^n)$ denote the space of all complex-valued bounded measurable functions on $X^n$ with compact support. Let $g^{(n)}\in B_0(X^n)$.
Fix any sequence of $+$ and $-$ of length $n\ge 2$, and denote
it by $(\sharp_1, \dots , \sharp_n)$. It is easy to see that the expression
\[
\int_{X^n}dx_1\dotsm dx_n\,g^{(n)}(x_1, \dots ,
x_n)\partial^{\sharp_1}_{x_1}\dotsm \partial^{\sharp_n}_{x_n}
\]
identifies a linear continuous operator on $\mathcal  F_{\mathrm{fin}}^Q(\mathcal H)$. Here
 we used the notation $\partial_x^+:=\partial_x^\dag$, $\partial^-_x:=\partial_x$.

The creation and annihilation operators satisfy the anyon commutation
relations:
\begin{align}
\partial_x\partial_y^\dag &= \delta(x, y)+Q(x, y)\partial^\dag_y\partial_x,\label{fdytde} \\
\partial_x\partial_y &= Q(y,x)\partial_y\partial_x ,\label{jhgyufd}\\
\partial^\dag_x\partial^\dag_y &= Q(y, x)\partial^\dag_y\partial^\dag_x .\label{pr++}
\end{align}
Here  $\delta(x, y)$ is understood as:
\[
\int_{X^2}dx\, dy\,g^{(2)}(x, y)\delta(x, y) :=
\int_{X}dx\, g^{(2)}(x, x).
\]
Formulas \eqref{fdytde}--\eqref{pr++} make  rigorous sense after smearing with   functions $g^{(2)}\in B_0(X^2)$.
Note that, for $q=1$, equations \eqref{fdytde}--\eqref{pr++} become the canonical commutation relations, while for $q=-1$ they become the canonical anticommutation relations.

\begin{remark}
Let $D:=\{(x,x)\mid x\in X\}$. Note that, for each $g^{(2)}\in B_0(X^2)$ which has support in $D$, the operator $\int_{X^2}dx\,dy\, g^{(2)}(x,y)\partial^\dag_y\partial_x$ is equal to zero. Hence, it does not influence \eqref{fdytde} that we have not identified the function $Q$ on $D$.
\end{remark}

For a bounded linear operator $A$ in $\mathcal H$, we define
the differential second quantization of $A$, denoted by $d\Gamma(A)$,
as a linear continuous operator on $\mathcal F_{\mathrm{fin}}^Q(\mathcal H)$ given by
$d\Gamma(A)\Omega:=0$ and
$$d\Gamma(A)\restriction \mathcal H^{\cd n}:=\Sym_n(A\otimes \mathbf 1\otimes\dots\otimes \mathbf 1+
\mathbf 1\otimes A\otimes \mathbf 1\otimes\dots\otimes\mathbf 1+\dots+\mathbf 1\otimes\dots\otimes\mathbf1\otimes A)$$
for each $n\in\mathbb N$. For each a.e.\ bounded function $h\in L^\infty(X,m)$, we define a neutral operator
\begin{equation}\label{ur75r7} a^0(h):=\int_X dx\,h(x)\partial_x^\dag\partial_x.\end{equation}
 According to formulas \eqref{bgyiugtf8utfg} and \eqref{out979p}, we have
 \begin{align}
 \big(a^0(h)f^{(n)}\big)(x_1,\dots,x_n)&=\bigg(\int_X dx\, h(x)\di_x^\dag f^{(n)}(x,\cdot)\bigg)(x_1,\dots,x_n)\notag\\
 &=n\operatorname{Sym}_n\big(h(x_1)f^{(n)}(x_1,x_2,\dots,x_n)\big).\label{jfiu}
 \end{align}
From here one easily gets
\begin{equation}\label{gtdytlut9t8ghogt}
(a^0(h)f^{(n)})(x_1,\dots,x_n)=\big(h(x_1)+\dots+h(x_n)\big)f^{(n)}(x_1,\dots,x_n).\end{equation}
Hence, $a^0(h)=d\Gamma(M_h)$, where $M_h$ is the operator of multiplication by $h$.

\subsection{Anyon L\'evy white noise and noncommutative orthogonal polynomials}\label{utfr57rtf}
Let us now recall the construction of a L\'evy white noise over $X$ for anyon statistics, see \cite{BLW}.
Let $\nu$ be a probability measure on $(\mathbb{R},\mathcal B(\R))$. (In fact, we can  instead assume that $\nu$ is a finite measure. The results below will then require a trivial modification.)
We denote by $\mathscr{P}(\R)$ the linear space of polynomials on $\mathbb R$. We  assume that $\mathscr P(\R)$ is a dense subset of $L^2(\R,\nu)$. Note that the latter assumption is satisfied if, for example, \eqref{ft7er7i57} holds.

 We extend the function $Q$  by setting
$$Q(x_1,s_1,x_2,s_2):=Q(x_1,x_2),\quad (x_1,x_2)\in X^{(2)},\ (s_1,s_2)\in\mathbb R^2.$$
Thus, the value of the function $Q$ does not depend on $s_1$ and $s_2$.
Analogously to \eqref{uyr57eses}, we define the notion of a $Q$-symmetric function $f^{(n)}$ defined on the set
$$\left\{(x_1,s_1,\dots,x_n,s_n)\in (X\times\R)^n\mid (x_1,\dots,x_n)\in X^{(n)}\right\}.$$
For example, for $n=2$, the $Q$-symmetry means:
$$ f^{(2)}(x_1,s_1,x_2,s_2)=Q(x_1,x_2)f^{(2)}(x_2,s_2,x_1,s_1).$$

We   next set $$\mathcal G:=L^2(X\times \mathbb R,m\otimes \nu)=\h\otimes L^2(\mathbb R,\nu),$$
 and consider the corresponding $Q$-symmetric Fock space $\mathcal F^Q(\mathcal G)$, which is constructed by analogy with  $\mathcal F^Q(\mathcal H)$.
  Let $\mathcal F_{\mathrm{fin}}^Q(\h\otimes\mathscr P(\mathbb R))$ denote the linear subspace  of $\mathcal F^Q(\mathcal G)$ which consists of all finite sequences
$$F=(F^{(0)},F^{(1)},\dots,F^{(n)},0,0,\dots),\quad n\in\mathbb N_0,$$
such that each $F^{(k)}$ with $k\ne0$ has the form
$$F^{(k)}(x_1,s_1,\dots,x_k,s_k)=\Sym_k\left[\sum_{(i_1,i_2,\dots,i_k)\in\{0,1,\dots,N\}^k}f_{(i_1,i_2,\dots,i_k)}
(x_1,x_2,\dots,x_k)s_1^{i_1}s_2^{i_2}\dotsm s_{k}^{i_k}\right],$$
where $f_{(i_1,i_2,\dots,i_k)}\in \h^{\otimes k}$ and $N\in\mathbb N$. Clearly, $\mathcal F_{\mathrm{fin}}^Q(\h\otimes\mathscr P(\mathbb R))$ is dense in $\mathcal F^Q(\mathcal G)$.

  We denote $1(s):=1$ and $\id(s):=s$ for $s\in\R$. Thus, $1,\id\in\mathscr P(\R)$. We denote by $C_0(X\mapsto\mathbb R)$ the space of all real-valued continuous  functions on $X$ with compact support. 
  For each $f\in C_0(X\mapsto\mathbb R)$, we define an operator
\begin{equation}\label{g7r75e}\la \omega,f\ra:=a^{+}(f\otimes 1)+a^0(f\otimes \id)+a^-(f\otimes 1)\end{equation}
in $\mathcal F^Q(\mathcal G)$ with domain $\mathcal F_{\mathrm{fin}}^Q(\h\otimes\mathscr P(\mathbb R))$. Clearly, each operator $\la \omega,f\ra$ maps \linebreak  $\mathcal F_{\mathrm{fin}}^Q(\h\otimes\mathscr P(\mathbb R))$ into itself. In fact, under  assumption \eqref{ft7er7i57}, each  $F\in\mathcal F_{\mathrm{fin}}^Q(\h\otimes\mathscr P(\mathbb R))$ is an analytic vector for each operator $\la \omega,f\ra$ with  $f\in C_0(X\mapsto\mathbb R)$, which implies that the operators $\la \omega,f\ra$  are essentially self-adjoint on $\mathcal F_{\mathrm{fin}}^Q(\h\otimes\mathscr P(\mathbb R))$ (see e.g.\ \cite[Sec.~X.2]{RS2}).

\begin{remark}
Let us keep the notation $\la \omega,f\ra$ for the closure of this operator in $\mathcal F^Q(\mathcal G)$. Thus the operators $\la \omega,f\ra$ are self-adjoint. 
In the boson case, $q=1$, these operators also commute in the sense of commutation of their resolutions of the identity. By using e.g.\ the projection spectral theorem \cite{BK}, one  shows \cite{DL} that there exists a unitary isomorphism between the symmetric Fock space 
$\mathcal F^Q(\mathcal G)$ and the space $L^2(\mathscr D'(X),\mu)$, where $\mu$ is the L\'evy white noise measure with Fourier transform \eqref{jig8yugtf8}. 
Under this unitary isomorphism, the vacuum vector $\Omega$ becomes the constant function $1$, and each  operator $\la \omega,f\ra$
becomes the operator of multiplication by the random variable $\la \omega,f\ra$ in 
$L^2(\mathscr D'(X),\mu)$. In other words, $\mu$ is the spectral measure of the family of commuting self-adjoint operators $\big(\la \omega,f\ra\big)_{f\in C_0(X\mapsto\R)}$. 
In particular, the operators $\big(\la \omega,f\ra\big)_{f\in C_0(X\mapsto\R)}$ in the symmetric Fock space $\mathcal F^Q(\mathcal G)$ can indeed be thought of as a L\'evy white noise. Let us also note that the unitary operator between $\mathcal F^Q(\mathcal G)$ and  $L^2(\mathscr D'(X),\mu)$ was originally derived by It\^o, by using multiple stochastic integrals, see 
\cite{Ito}. 
\end{remark}

\begin{remark}
Note that, if the measure $\nu$ is concentrated at one point, $\lambda\in\mathbb R$, then
$\mathcal G=\h$ and each operator $\la \omega,f\ra$ 
has the following form in $\mathcal F^Q(\mathcal H)$:
\begin{equation}\label{vgfi} \la \omega,f\ra:=a^+(f)+a^-(f)+\lambda a^0(f).\end{equation}
The choice $\lambda=0$ corresponds to an anyon Gaussian white noise, while $\lambda\ne0$ corresponds to an anyon centered white noise.  If we denote
\begin{equation}\label{vcydcyd}\omega(x):=\partial_x^\dag+\lambda\partial_x^\dag\partial_x+\partial_x,\quad x\in X,\end{equation}
then, by \eqref{out979p}, \eqref{ur75r7}, \eqref{vgfi}, and \eqref{vcydcyd}, we get
$$ \la 
\omega,f\ra=\int_X dx\, \omega(x)f(x),\quad f\in C_0(X\mapsto\R),$$ which justifies the notation $\la \omega,f\ra$.
Thus, $(\omega(x))_{x\in X}$ is the anyon Gaussian/Poisson white noise.
Note that $\omega(x)$ is informally treated as an operator-valued distribution.
\end{remark}

We further denote by $C_0(X)$ the space of all complex-valued, continuous  functions on $X$ with compact support. For $f\in C_0(X)$, we set $\la \omega,f\ra:=\la
\omega,\Re f\ra+i\la \omega,\Im f\ra$.

Let $\mathscr P$ denote the complex unital $*$-algebra generated by $(\la \omega,f\ra)_{f\in C_0(X)}$, i.e., the algebra of noncommutative polynomials in  variables $\la \omega,f\ra$. In particular, elements of $\mathscr P$ are linear operators acting on $\mathcal F_{\mathrm{fin}}^Q(\h\otimes\mathscr P(\mathbb R))$, and for each $p\in\mathscr P$, $p^*$ is the adjoint operator of $p$ in $\mathcal F^Q(\mathcal G)$, restricted to $\mathcal F_{\mathrm{fin}}^Q(\h\otimes\mathscr P(\mathbb R))$.

We define a vacuum state on $\mathscr P$ by
$$\tau(p):=(p\Omega,\Omega)_{\mathcal F^Q(\mathcal G)},\quad p\in\mathscr P.$$ We introduce a scalar product on $\mathscr P$
by
$$(p_1,p_2)_{L^2(\tau)}:=\tau(p_2^*p_1)=(p_1\Omega,p_2\Omega)_{\mathcal F^Q(\mathcal G)}, \quad p_1,p_2\in\mathscr P.$$
Let
\begin{equation}\label{hufdy676}
\widetilde {\mathscr P}:=\{p\in\mathscr P\mid (p,p)_{L^2(\tau)}=0\},
 \end{equation}
 and define the noncommutative $L^2$-space $L^2(\tau)$ as the completion  of the quotient space $\mathscr P/\widetilde{\mathscr P}$ with respect to the norm generated by the scalar product $(\cdot,\cdot)_{L^2(\tau)}$. Elements $p\in\mathscr P$ are treated as representatives of the equivalence classes from
$\mathscr P/\widetilde{\mathscr P}$, and so $\mathscr P$ becomes a dense subspace of $L^2(\tau)$. As shown in \cite{BLW}, the vacuum vector $\Omega$ is cyclic for the family of operators $(\la \omega,f\ra)_{f\in C_0(X\mapsto\mathbb R)}$.
Consider a linear mapping $I:\mathscr P\to
\mathcal F^Q(\mathcal G)$ defined by 
$$Ip:=p\Omega\quad \text{for $p\in\mathscr P$}.$$
 Then $Ip_1=Ip_2$ if $p_1,p_2\in\mathscr P$ are such that   $p_1-p_2\in\widetilde{\mathscr P}$, and   $I$ extends to a unitary operator $I:L^2(\tau)\to\mathcal F^Q(\mathcal G)$.

Note that, for each $p\in\mathscr P$ and $f\in C_0(X)$,
\begin{equation}\label{tyr75}
 I\big(\la \omega,f\ra p\big)=\la \omega,f\ra  (Ip),\end{equation}
i.e., under the unitary $I$, the operator of left multiplication by $\la \omega,f\ra$ in $L^2(\tau)$ becomes  the operator $\la \omega,f\ra$ in $\mathcal F^Q(\mathcal G)$.

Let us consider the  topology on $C_0(X)$
which yields the following notion of convergence: $f_n\to f$ as $n\to\infty$ means that there exists a set $\Delta\in\mathcal B_0(X)$ such that
$\operatorname{supp}(f_n)\subset\Delta$ for all $n\in\mathbb N$ and
\begin{equation}\label{ilyufre75iei7}\sup_{x\in X}|f_n(x)-f(x)|\to0\quad\text{as }n\to\infty.\end{equation}
By linearity and continuity we can extend the mapping
$$C_0(X)^n\ni (f_1,\dots,f_n)\mapsto\la \omega^{\otimes n}, f_1\otimes\dots\otimes f_n\ra
=\la \omega, f_1\ra\dotsm \la \omega, f_n\ra\in\mathscr P
$$
   to a mapping
$$ C_0(X^n)\ni f^{(n)}\mapsto \la \omega^{\otimes n},f^{(n)}\ra \in L^2(\tau),$$
and $\la \omega^{\otimes n},f^{(n)}\ra $ can be thought of as a linear operator acting in $\mathcal F_{\mathrm{fin}}^Q(\h\otimes\mathscr P(\mathbb R))$. We will think of $\la \omega^{\otimes n},f^{(n)}\ra $ as a continuous monomial of order $n$. Sums of such operators and (complex) constants form the set $\mathscr{CP}$ of continuous polynomials (of $\omega$). Evidently, $\mathscr P\subset\mathscr{CP}$.

Completely analogously to \eqref{gyut8t}, we derive the orthogonal decomposition
\begin{equation}\label{tyr65}
L^2(\tau)=\bigoplus_{n=0}^\infty \mathscr{OP}_n\end{equation}
(we used obvious notations). For any $f^{(n)}\in C_0(X^n)$, we denote by $\la P_n(\omega),f^{(n)}\ra$ the orthogonal projection of $\la \omega^{\otimes n},f^{(n)}\ra$ onto $ \mathscr{OP}_n$. The set of finite linear sums of $\la P_n(\omega),f^{(n)}\ra$ and (complex) constants is denoted by $\mathscr{OCP}$ (orthogonalized continuous polynomials).

\begin{remark}
Note that $\la P_1(\omega),f\ra=\la \omega,f\ra$.
\end{remark}

\begin{remark}
Note that, in subsec.~\ref{fytry}, we used functions $f^{(n)}\in\mathscr D(X)^{\otimes n}$ when defining $\mathscr{CP}$ and $\mathscr{OCP}$, while now we are using $f^{(n)}\in C_0(X^n)$ to define $\mathscr{CP}$ and $\mathscr{OCP}$. The reason is that, in the noncommutative setting, there is no need for $f^{(n)}$ to be smooth, while in the classical case, $q=1$, Theorem~\ref{ur7o67r6} still holds for the sets $\mathscr{CP}$ and $\mathscr{OCP}$ as defined in this section.
\end{remark}

\section{Main results}\label{yre6tr5}

\subsection{The measures $m_\nu^{(n)}$}\label{utf7r5svgy}

Let $(p_k)_{k=0}^\infty$ denote the system of monic orthogonal polynomials in $L^2(\mathbb R,\nu)$. (If the support of $\nu$ is finite and consists of $N$ points, we set $p_k:=0$ for $k\ge N$.) Hence, $(p_k)_{k=0}^\infty$ satisfy  the recursion formula
\begin{equation}\label{hdtrss}sp_k(s)=p_{k+1}(s)+b_kp_{k}(s)+a_kp_{k-1}(s),\quad k\in\mathbb N_0,\end{equation}
with $p_{-1}(s):=0$, $a_k>0$, and $b_k\in\mathbb R$. (If the support of $\nu$ has $N$ points, $a_k=0$ for $k\ge N$.)

We define
\begin{equation}\label{yuft8uotfr8o}c_k:=a_0a_1\dotsm a_{k-1},\quad k\in\mathbb N,\end{equation}
where $a_0:=1$ and the $a_k$'s for $k\in\mathbb N$ are the coefficients from  formula \eqref{hdtrss}.
We equivalently have:
\begin{equation}\label{yufrur}c_k=\int_{\mathbb R}p_{k-1}(s)^2\,\nu(ds),\quad k\in\mathbb N,\end{equation}
which is a well known fact of the theory of orthogonal polynomials.
Note that $c_1=1$ and $c_k=0$ for $k\ge2$
 if and only if the measure $\nu$ is concentrated at one point.

We denote  by $\Pi(n)$ the set of all (unordered) partitions of the set $\{1,\dots,n\}$. For each partition
$\theta=\{\theta_1,\dots,\theta_l\}\in\Pi(n)$, we set
$|\theta|:=l$. For each  $\theta\in\Pi(n)$, we denote by $X^{(n)}_\theta$ the subset of $X^n$ which consists of all $(x_1,\dots,x_n)\in X^n$ such that, for all $1\le i<j\le n$, $x_i=x_j$ if and only if $i$ and $j$ belong to the same element of the partition $\theta$. Note that the sets $X^{(n)}_\theta$ with $\theta\in \Pi(n)$
form a partition of $X^n$. Note also that, by \eqref{ydr5w54}, $X^{(n)}=X_\theta^{(n)}$ for the minimal partition $\theta=\{\{1\},\,\{2\},\dots,\, \{n\}\}$.

Let us fix $n\in\mathbb N$,  a permutation $\pi\in \mathfrak S_n$,
and a partition $\theta=\{\theta_1,\dots,\theta_l\}\in\Pi(n)$ satisfying
\begin{equation}\label{ft6e}\max \theta_1<\max\theta_2<\dots<\max\theta_l.\end{equation}
We define a measure $m_{\nu,\,\theta}^{(n)}$ on $X_\theta^{(n)}$ as the push-forward of the measure
$$  \big(c_{|\theta_1|}\dotsm c_{|\theta_l|}\big)n!
\big(|\theta_1|!\dotsm |\theta_l|!\big)^{-1}\, m^{\otimes l}$$
on $X^{(l)}$ under the mapping
$$ X^{(l)}\ni y=(y_1,\dots,y_l)\mapsto (R_\theta^1y,\dots, R_\theta^n y)\in X_\theta^{(n)},$$
where $R_\theta^iy=y_j$ for $i\in\theta_j$. 
Here $|\theta_i|$ denotes the number of elements of the set $\theta_i$.
Recalling that the sets $X_\theta^{(n)}$ with $\theta\in\Pi(n)$ form a partition of $X^{n}$, we define a measure $m_\nu^{(n)}$ on $X^n$ such that the restriction of $m_\nu^{(n)}$ to each $X_\theta^{(n)}$ is equal to $m_{\nu,\,\theta}^{(n)}$. Note that the restriction of 
$m_\nu^{(n)}$ to $X^{(n)}$ is equal to $n!\, m^{\otimes n}$.

For example, for $n=2$, we get
\begin{align*}
\int_{X^2}f^{(2)}(x_1,x_2)\,m_\nu^{(2)}(dx_1\times dx_2)
&=\int_{\{x_1\ne x_2\}}f^{(2)}(x_1,x_2)\,dx_1\,dx_2\, 2+\int_X f^{(2)}(x,x)\, dx\, c_2\\
&=\int_{X^2}f^{(2)}(x_1,x_2)\,dx_1\,dx_2\, 2+\int_X f^{(2)}(x,x)\, dx\, c_2.
\end{align*}

\subsection{An extended anyon Fock space}\label{guyf7r}
Let us recall that, in subsec.~\ref{hfgyufzua}, see in particular \eqref{uyr57eses}, we defined the notion of a $Q$-symmetric function $f^{(n)}:X^{(n)}\to\mathbb C$. Our next  aim is to extend this notion to a complex-valued function 
defined on the whole $X^n$.

Let us fix  a permutation $\pi\in \mathfrak S_n$
and a partition $\theta=\{\theta_1,\dots,\theta_l\}\in\Pi(n)$ satisfying \eqref{ft6e}. The permutation $\pi$ maps the  partition $\theta$ into a new partition
$$\{\pi\theta_1,\dots,\pi\theta_l\}\in\Pi(n).$$
We call this new partition $\beta=\{\beta_1,\dots,\beta_l\}$, where the elements of the partition $\beta $ are enumerated in such a way that
\begin{equation}\label{adiov}\max\beta_1<\max\beta_2<\dots<\max\beta_l.\end{equation}
Thus, the permutation $\pi\in \mathfrak S_n$ identifies a permutation $\widehat \pi\in \mathfrak S_l$ (dependent on $\theta$) such that
\begin{equation}\label{huyfr7i5er}\pi\theta_i=\beta_{\widehat \pi(i)},\quad i=1,\dots,l.\end{equation}

Recall the complex-valued function $Q_\pi(x_1,\dots,x_n)$ on $X^{(n)}$ defined by \eqref{y7e57ie}. We will now extend this function to the whole set $X^{n}$ as follows. We fix any $\theta=\{\theta_1,\dots,\theta_l\}\in\Pi(n)$  satisfying  \eqref{ft6e} and any  $(x_1,\dots,x_n)\in X_\theta^{(n)}$. We
denote by $x_{\theta_1},x_{\theta_2},\dots,x_{\theta_l}$ the elements $x_{i_1},x_{i_2},\dots,x_{i_l}$ with $i_1\in\theta_1, i_2\in\theta_2,\dots,i_l\in\theta_l$, respectively.
We  set
\begin{equation}\label{uyr75rw}  Q _\pi(x_1,\dots,x_n):= \prod_{\substack{1\le i<j\le l\\ \widehat\pi(i)>\widehat \pi(j)}} Q(x_{\theta_i},x_{\theta_j}),\end{equation}
where the permutation $\widehat\pi\in\mathfrak S_l$ is as above.
Note that, for the  partition
$$\theta=\big\{\{1\},\{2\},\dots,\{n\}\big\},$$ the restriction of the function $ Q_\pi$ to the set $X_\theta^{(n)}=X^{(n)}$ is indeed equal to the function  $Q_\pi$  defined by \eqref{y7e57ie}.

We will say that a function $f^{(n)}:X^n\to\mathbb C$  is $Q$-symmetric if, for each permutation $\pi\in\mathfrak S_n$, 
\begin{equation}\label{ft7ier5ird} f^{(n)}(x_1,\dots,x_n)= Q_\pi(x_1,\dots,x_n)f^{(n)}(x_{\pi^{-1}(1)},\dots,x_{\pi^{-1}(n)}),\quad (x_1,\dots,x_n)\in X^n.\end{equation}
In particular, the restriction of such a function to $X^{(n)}$ is then $Q$-symmetric according to our definition in 
subsec.~\ref{hfgyufzua}, i.e., it satisfies \eqref{uyr57eses}.

Next, for a function $f^{(n)}:X^n\to\mathbb C$, we define
\begin{multline} (\Sym_n\, f^{(n)})(x_1,\dots,x_n)\\=\frac1{n!}\sum_{\pi\in \mathfrak S_n}
 Q_\pi(x_1,\dots,x_n)f^{(n)}(x_{\pi^{-1}(1)},\dots,x_{\pi^{-1}(n)}),\quad (x_1,\dots,x_n)\in X^n.\label{tr866}\end{multline}
Clearly, the restriction of the function  $\Sym_n\,f^{(n)}$
to the set $X^{(n)}$ is still given by  \eqref{oitr8o}.

We denote by $\mathbf F^Q_n(\mathcal H,\nu)$ the subspace of the complex $L^2$-space $L^2(X^n,m_\nu^{(n)})$ which consists of ($m_\nu^{(n)}$-versions of) $Q$-symmetric functions.

\begin{proposition}\label{yfd6rd6} For each $n\in\mathbb N$, $\Sym_n$ is the orthogonal projection of $L^2(X^n,m_\nu^{(n)})$ onto $\mathbf F^Q_n(\mathcal H,\nu)$.
\end{proposition}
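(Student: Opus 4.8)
The plan is to show that $\Sym_n$ is a bounded linear operator on $L^2(X^n,m_\nu^{(n)})$ which is idempotent, self-adjoint, and has range exactly $\mathbf F^Q_n(\mathcal H,\nu)$. Since the sets $X^{(n)}_\theta$ with $\theta\in\Pi(n)$ form a measurable partition of $X^n$ and each carries the measure $m^{(n)}_{\nu,\theta}$ described in subsec.~\ref{utf7r5svgy}, I would work separately on each stratum $X^{(n)}_\theta$ and then reassemble, noting that $\Sym_n$ maps each stratum into itself (the permuted point $(x_{\pi^{-1}(1)},\dots,x_{\pi^{-1}(n)})$ lies in the same stratum $X^{(n)}_\theta$ as $(x_1,\dots,x_n)$, because the coincidence pattern is permuted but the partition $\theta$ is fixed when we insist on \eqref{ft6e}). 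This reduces matters to understanding, on a single stratum $X^{(n)}_\theta\cong X^{(l)}$ with $l=|\theta|$, the action of the finite group $\mathfrak S_n$ through the cocycle $Q_\pi$ and the induced homomorphism $\pi\mapsto\widehat\pi\in\mathfrak S_l$.

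The key algebraic facts I would establish are: (a) the cocycle identity $Q_{\pi\sigma}(x_1,\dots,x_n)=Q_\pi(x_1,\dots,x_n)\,Q_\sigma(x_{\pi^{-1}(1)},\dots,x_{\pi^{-1}(n)})$ on each stratum, which follows from the corresponding identity for $Q_{\widehat\pi}$ on $X^{(l)}$ (this is the standard anyon cocycle identity already implicit in subsec.~\ref{hfgyufzua}) together with the multiplicativity $\widehat{\pi\sigma}=\widehat\pi\,\widehat\sigma$ of the map $\pi\mapsto\widehat\pi$; and (b) the unitarity relation $|Q_\pi|=1$ and $\overline{Q_\pi(x_1,\dots,x_n)}=Q_{\pi^{-1}}(x_{\pi^{-1}(1)},\dots,x_{\pi^{-1}(n)})$, coming from $\overline{Q(x,y)}=Q(y,x)$. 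Granting (a), a direct computation shows $\Sym_n^2=\Sym_n$: expanding $\Sym_n(\Sym_n f^{(n)})$ and using the cocycle identity to combine the two sums over $\mathfrak S_n$, re-indexing $\tau=\pi\sigma$, produces $\frac1{n!}\sum_{\pi}\cdot\frac1{n!}\sum_\tau Q_\tau(\cdot)f^{(n)}(\tau^{-1}\cdot)=\Sym_n f^{(n)}$. Granting (b), a similar manipulation of $(\Sym_n f,g)_{L^2(X^n,m_\nu^{(n)})}$ — changing variables $x\mapsto x_{\pi(\cdot)}$ in each summand, which is measure-preserving because $m^{(n)}_{\nu,\theta}$ is symmetric (it is a push-forward of a symmetric product measure $c_{|\theta_1|}\cdots c_{|\theta_l|}\,n!(|\theta_1|!\cdots|\theta_l|!)^{-1}m^{\otimes l}$ and the weights depend only on the block sizes, which $\pi$ preserves) — gives $(\Sym_n f,g)=(f,\Sym_n g)$, so $\Sym_n$ is self-adjoint. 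Boundedness (indeed $\|\Sym_n\|\le1$) is immediate from $|Q_\pi|=1$ and the triangle inequality together with the measure-preserving change of variables.

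It then remains to identify the range. The inclusion $\Sym_n(L^2)\subseteq\mathbf F^Q_n(\mathcal H,\nu)$ follows from the cocycle identity: applying a permutation $\sigma$ to $\Sym_n f^{(n)}$ and using (a) to factor out $Q_\sigma$ shows $\Sym_n f^{(n)}$ satisfies \eqref{ft7ier5ird}. Conversely, if $f^{(n)}$ is already $Q$-symmetric, then each summand in \eqref{tr866} equals $f^{(n)}$ by \eqref{ft7ier5ird}, so $\Sym_n f^{(n)}=f^{(n)}$; hence $\mathbf F^Q_n(\mathcal H,\nu)$ is contained in the range and is pointwise fixed. A bounded self-adjoint idempotent whose range is a closed subspace is the orthogonal projection onto that subspace, so it only remains to note $\mathbf F^Q_n(\mathcal H,\nu)$ is closed in $L^2(X^n,m_\nu^{(n)})$ — which it is, being the fixed-point set of the bounded operator $\Sym_n$ (equivalently the intersection of kernels of the bounded operators $f\mapsto f - Q_\pi\cdot f(\pi^{-1}\,\cdot)$).

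I expect the main obstacle to be bookkeeping rather than a deep difficulty: one must carefully verify that the map $\pi\mapsto\widehat\pi$ is a well-defined group homomorphism from $\mathfrak S_n$ to $\mathfrak S_l$ when we consistently enumerate both $\theta$ via \eqref{ft6e} and $\beta=\pi\theta$ via \eqref{adiov}, and that $Q_\pi$ on $X^{(n)}_\theta$, defined via $Q_{\widehat\pi}$ on the ``collapsed'' coordinates $x_{\theta_1},\dots,x_{\theta_l}$, genuinely satisfies the anyon cocycle identity after this collapse. Once the correspondence $\pi\leftrightarrow\widehat\pi$ and the identity $\widehat{\pi\sigma}=\widehat\pi\widehat\sigma$ are nailed down, everything reduces to the already-known facts about $\Sym_l$ on $\mathcal H^{\otimes l}$ applied stratum-by-stratum, and the measure-theoretic gluing over the finite family $\{X^{(n)}_\theta\}_{\theta\in\Pi(n)}$ is routine.
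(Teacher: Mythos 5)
Your proposal is correct and follows essentially the same route as the paper: the paper likewise reduces everything to the operator identity $\Psi_\pi\Psi_\varkappa=\Psi_{\varkappa\pi}$ on each stratum $X^{(n)}_\theta$ (your cocycle identity (a), proved via the homomorphism property $\widehat{\varkappa\pi}=\widehat\varkappa\,\widehat\pi$ of the induced permutations), uses the permutation-invariance of $m^{(n)}_\nu$ together with $\overline{Q_{\pi^{-1}}(x_{\pi^{-1}(1)},\dots,x_{\pi^{-1}(n)})}=Q_\pi(x_1,\dots,x_n)$ for self-adjointness, and then identifies the range as the space of $Q$-symmetric functions. The only cosmetic discrepancy is the composition order in your identity (a) --- with the paper's conventions it reads $Q_{\sigma\pi}(x_1,\dots,x_n)=Q_\pi(x_1,\dots,x_n)\,Q_\sigma(x_{\pi^{-1}(1)},\dots,x_{\pi^{-1}(n)})$ rather than $Q_{\pi\sigma}$ --- which is immaterial, since idempotence and the range computation only use the sum over all of $\mathfrak S_n$.
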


We also set  $\mathbf F^{ Q}_{0}(\mathcal H,\nu)=\{c\Omega\mid c\in\mathbb C\}$, where $\Omega$ is the vacuum vector. We define an extended anyon Fock space
$$\mathbf F^{Q}(\mathcal H,\nu):=\bigoplus_{n=0}^\infty
\mathbf  F^{Q}_{n}(\mathcal H,\nu).$$

If  the measure $\nu$ is concentrated at one point (and so $c_1=1$ and $c_k=0$ for $k\ge2$),  we get $\mathbf F^{Q}(\mathcal H,\nu)=\mathcal F^Q(\mathcal H)$, i.e., $\mathbf F^{Q}(\mathcal H,\nu)$ is the usual anyon Fock space. Otherwise,   $\mathcal F^Q(\mathcal H)$ is a proper subspace of $\mathbf F^{\mathbf Q}(\mathcal H,\nu)$. Indeed, recalling formula \eqref{rtsew6uwy}, we may  embed  $\mathcal F^Q(\mathcal H)$ into $\mathbf F^{Q}(\mathcal H,\nu)$ by identifying  each function  $f^{(n)}\in\mathcal H^{\cd n}$ with the function from $\mathbf F^{Q}_{ n}(\mathcal H,\nu)$ which is equal to $f^{(n)}$ on $X^{(n)}$, and to 0 otherwise. Evidently, the orthogonal complement to $\mathcal F^Q(\mathcal H)$ in $\mathbf F^{\mathbf Q}(\mathcal H,\nu)$ is a non-zero space in this case.

Using the orthogonal decomposition \eqref{tyr65}, we will now construct a unitary isomorphism between $L^2(\tau)$ and the extended anyon Fock space $\mathbf F^{\mathbf Q}(\mathcal H,\nu)$.

\begin{theorem}\label{utu8}
Let $f^{(n)},g^{(n)}\in C_0(X^n)$. Then
\begin{equation}\label{gilyr7e5if}
\big(\la P_n(\omega),f^{(n)}\ra , \,\la P_n(\omega),g^{(n)}\ra\big)_{L^2(\tau)}=(\Sym_n\,f^{(n)},\Sym_n\, g^{(n)})_{\mathbf F_{ n}^{Q}(\mathcal H,\nu)}.
\end{equation}
\end{theorem}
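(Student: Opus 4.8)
The plan is to pass to tensor‑product test functions, to compute the vector $I\big(\la P_n(\omega),f^{(n)}\ra\big)=\la P_n(\omega),f^{(n)}\ra\Omega$ explicitly inside $\mathcal F^Q(\mathcal G)$, and then to match the resulting scalar product against the right‑hand side of \eqref{gilyr7e5if} cell by cell over the partitions of $\{1,\dots,n\}$. Both sides of \eqref{gilyr7e5if} are sesquilinear in $(f^{(n)},g^{(n)})$; with respect to the convergence on $C_0(X^n)$ given by a common compact support together with uniform convergence, the map $f^{(n)}\mapsto\Sym_n f^{(n)}\in L^2(X^n,m_\nu^{(n)})$ is continuous (by Proposition~\ref{yfd6rd6}, $\Sym_n$ is an orthogonal projection, and $m_\nu^{(n)}$ is finite on compacts), and so is $f^{(n)}\mapsto\la P_n(\omega),f^{(n)}\ra\in L^2(\tau)$ (the monomial map $\la\omega^{\otimes n},\cdot\ra$ was defined by continuous extension, and $\la P_n(\omega),\cdot\ra$ is its orthogonal projection onto $\mathscr{OP}_n$). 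Since the functions $f_1\otimes\dots\otimes f_n$ with $f_i\in C_0(X)$ span a dense subspace of $C_0(X^n)$, it suffices to prove \eqref{gilyr7e5if} for $f^{(n)}=f_1\otimes\dots\otimes f_n$ and $g^{(n)}=g_1\otimes\dots\otimes g_n$.

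For $0\le l\le n$, let $\mathcal K^{(n)}_l$ be the closed subspace of $\mathcal G^{\cd l}$ consisting of the $Q$‑symmetric functions whose dependence on $(s_1,\dots,s_l)$ is polynomial of total degree at most $n-l$, and put $\mathcal N_n:=\bigoplus_{l=0}^n\mathcal K^{(n)}_l$. Writing $\la\omega,f\ra=a^+(f\otimes1)+a^0(f\otimes\id)+a^-(f\otimes1)$, and noting that $a^0$ multiplies the $\R$‑component of each particle by $\id$ while $a^\pm$ change the particle number by one without touching the remaining $\R$‑components, an easy induction gives $I(\mathscr{MP}_n)\subseteq\mathcal N_n$. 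Since $(p_k)_{k\ge0}$ is an orthogonal basis of $L^2(\R,\nu)$ with $\deg p_k=k$, one has $\mathcal N_n\ominus\mathcal N_{n-1}=\bigoplus_{l=0}^n\Sym_l\bigl(\h^{\otimes l}\otimes\operatorname{span}\{p_{d_1}\otimes\dots\otimes p_{d_l}:d_1+\dots+d_l=n-l\}\bigr)$. Next I would expand $\la\omega,f_1\ra\dotsm\la\omega,f_n\ra\Omega$ and keep, in each Fock sector, only the component of top $s$‑degree --- which, by the recursion $sp_k=p_{k+1}+b_kp_k+a_kp_{k-1}$, amounts to replacing every monomial $s^{|\theta_j|-1}$ produced by repeated action of $a^0$ on a single particle by the orthogonal polynomial $p_{|\theta_j|-1}$ ---, obtaining that the component of $\la\omega^{\otimes n},f^{(n)}\ra\Omega$ in $\mathcal N_n\ominus\mathcal N_{n-1}$ equals
\begin{equation}\label{propplanPsi}
\Psi_n(f^{(n)}):=\sum_{\substack{\theta=\{\theta_1,\dots,\theta_l\}\in\Pi(n)\\ \max\theta_1<\dots<\max\theta_l}}\Bigl(\bigl(\prod\nolimits_{i\in\theta_1}f_i\bigr)\otimes p_{|\theta_1|-1}\Bigr)\cd\dots\cd\Bigl(\bigl(\prod\nolimits_{i\in\theta_l}f_i\bigr)\otimes p_{|\theta_l|-1}\Bigr),
\end{equation}
while the rest of $\la\omega^{\otimes n},f^{(n)}\ra\Omega$ lies in $\mathcal N_{n-1}$. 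An induction on $n$ --- in which one checks, using the density of $\mathscr P(\R)$ in $L^2(\R,\nu)$ and a polarization in the $f_i$, that the vectors $\Psi_n(f^{(n)})$ exhaust $\mathcal N_n\ominus\mathcal N_{n-1}$ --- then gives $I(\mathscr{MP}_n)=\mathcal N_n$, and, since $\la\omega^{\otimes n},f^{(n)}\ra\Omega-\la P_n(\omega),f^{(n)}\ra\Omega\in I(\mathscr{MP}_{n-1})=\mathcal N_{n-1}$ while $\la P_n(\omega),f^{(n)}\ra\Omega$ and $\Psi_n(f^{(n)})$ both lie in $\mathcal N_n\ominus\mathcal N_{n-1}$, we conclude $I\big(\la P_n(\omega),f^{(n)}\ra\big)=\Psi_n(f^{(n)})$.

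With this, \eqref{gilyr7e5if} reduces to $\bigl(\Psi_n(f^{(n)}),\Psi_n(g^{(n)})\bigr)_{\mathcal F^Q(\mathcal G)}=\bigl(\Sym_n f^{(n)},\Sym_n g^{(n)}\bigr)_{L^2(m_\nu^{(n)})}$, which I would prove by a partition‑by‑partition book‑keeping. On the left‑hand side, terms of \eqref{propplanPsi} indexed by partitions with different $|\theta|$ sit in different Fock sectors; inside a fixed sector one writes each $\cd$‑product as $\Sym_l$ of a tensor product, uses the sector norm $\|\cdot\|^2_{\mathcal G^{\cd l}}\,l!$, and carries out the $\R$‑integrations via $\int_\R\overline{p_{k-1}}\,p_{k-1}\,d\nu=c_k$ together with the orthogonality of the $p_k$. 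On the right‑hand side one splits $X^n=\bigsqcup_{\theta\in\Pi(n)}X^{(n)}_\theta$ and uses that on $X^{(n)}_\theta$ the symmetrization \eqref{tr866} is governed by the functions $Q_\pi$ of \eqref{uyr75rw} --- which depend on $\pi$ only through the induced permutation $\widehat\pi\in\mathfrak S_l$ and hence never evaluate $Q$ on a diagonal --- while the measure $m^{(n)}_{\nu,\theta}$ carries the weight $c_{|\theta_1|}\dotsm c_{|\theta_l|}\,n!\,(|\theta_1|!\dotsm|\theta_l|!)^{-1}$; the factors $c_{|\theta_j|}$ are exactly the squared norms of the $p_{|\theta_j|-1}$, the factor $l!$ is the sector normalization, and the factors $|\theta_j|!$, $n!$ account for the orderings within the blocks and for the $\widehat\pi$‑summation that reassembles the braiding $Q_\pi$. (The case $n=2$ is a transparent instance: $\Psi_2(f\otimes g)=(f\otimes1)\cd(g\otimes1)+(fg)\otimes p_1$, whose squared norm is $2\int_{X^2}|\Sym_2(f\otimes g)|^2\,dx_1\,dx_2+c_2\int_X|f(x)g(x)|^2\,dx=\|\Sym_2(f\otimes g)\|^2_{L^2(m_\nu^{(2)})}$.)

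The hard part will be this last identity. The terms of \eqref{propplanPsi} indexed by partitions with the same number of blocks are \emph{not} mutually orthogonal, so one cannot simply add squared norms; one must show that the anyon braiding factors produced inside the iterated $\cd$‑products reassemble, after summation over the induced permutations $\widehat\pi$, precisely into the functions $Q_\pi$ of \eqref{uyr75rw} that define $\Sym_n$ on $X^n$, and that all the combinatorial weights $c_{|\theta_j|}$, $|\theta_j|!$, $n!$, $l!$ balance at once --- uniformly in $q$ with $|q|=1$, so that the boson, fermion and genuinely anyonic cases are covered simultaneously. Once this is established, \eqref{gilyr7e5if} follows, and it is precisely this relation that allows one to define the unitary $U:L^2(\tau)\to\mathbf F^Q(\mathcal H,\nu)$ of the Introduction by $\la P_n(\omega),f^{(n)}\ra\mapsto\Sym_n f^{(n)}$.
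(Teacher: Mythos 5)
Your plan follows essentially the same route as the paper: filter $\mathcal F^Q(\mathcal G)$ by (number of particles)$\,+\,$(total $s$-degree), identify $I\la P_n(\omega),f^{(n)}\ra$ for a product function as the sum over partitions $\theta=\{\theta_1,\dots,\theta_l\}\in\Pi(n)$ of the $\cd$-products whose $j$-th factor is $\bigl(\prod_{i\in\theta_j}f_i\bigr)\otimes p_{|\theta_j|-1}$ (this is exactly the paper's $\Sym\sum_\theta\mathcal E_\theta f^{(n)}$), and then compare the two quadratic forms partition by partition. But as a proof it has a genuine gap, which you yourself flag: the identity $\bigl(\Psi_n(f^{(n)}),\Psi_n(g^{(n)})\bigr)_{\mathcal F^Q(\mathcal G)}=(\Sym_n f^{(n)},\Sym_n g^{(n)})_{\mathbf F^Q_n(\mathcal H,\nu)}$ is never established, and this is where the entire content of the theorem sits --- it is the only place where the weights $c_{|\theta_1|}\dotsm c_{|\theta_l|}\,n!\,(|\theta_1|!\dotsm|\theta_l|!)^{-1}$ in the definition of $m_\nu^{(n)}$ and the extension \eqref{uyr75rw} of $Q_\pi$ to the diagonal strata are actually tested. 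The delicate point you name but do not resolve is the cross terms between distinct partitions $\theta\ne\theta'$ with the same multiset of block sizes. In the paper this is handled by fixing $\zeta$ with $|\zeta|=l$, noting that for each admissible $\theta$ there are exactly $|\zeta_1|!\dotsm|\zeta_l|!$ permutations $\pi\in\mathfrak S_n$ with $\pi\zeta_i=\theta_{\widehat\pi(i)}$, that the sets $\mathfrak S_n[\zeta,\theta]$ are pairwise disjoint and exhaust $\mathfrak S_n$ as $\theta$ varies, and that the braiding factor $Q_{\widehat\pi}$ arising from the $l$-particle inner product is precisely the restriction of the extended $Q_\pi$ to $X^{(n)}_\zeta$; only after all of this do the factors $l!$, $n!$, $|\zeta_i|!$ and $c_{|\zeta_i|}$ cancel to give $\int_{X^{(n)}_\zeta}(\Sym_n f^{(n)})f^{(n)}\,dm^{(n)}_{\nu,\zeta}$. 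Your $n=2$ check is consistent with this but does not substitute for the general bookkeeping, which is the heart of the proof.

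Two secondary points are also asserted rather than proved. First, identifying $I\la P_n(\omega),f^{(n)}\ra$ with your $\Psi_n(f^{(n)})$ needs the \emph{equality} $I\mathscr{MP}_{n-1}=\mathcal N_{n-1}$ (with only the inclusion $I\mathscr{MP}_{n-1}\subset\mathcal N_{n-1}$, the projection onto $\mathcal N_n\ominus\mathcal N_{n-1}$ need not coincide with the projection onto $I\mathscr{OP}_n$); you propose to close this by showing the vectors $\Psi_n(f^{(n)})$ exhaust $\mathcal N_n\ominus\mathcal N_{n-1}$, but give no argument --- the paper imports exactly this from the proof of \cite[Proposition 6.7]{BLW}. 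Second, your computation of $\mathcal N_n\ominus\mathcal N_{n-1}$ and the discarding of lower-order terms implicitly use the mutual orthogonality of the sectors labelled by the degree multisets (the paper's Proposition~\ref{ui8t868}, again taken from \cite{BLW}); this should at least be cited. These two items are repairable by references, but the partition-by-partition identity of the previous paragraph must be written out in full before the theorem can be considered proved.
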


Since the set $C_0(X^n)$ is dense in  $L^2(X^n,m_\nu^{(n)})$, Theorem~\ref{utu8} implies that we can extended the mapping 
$$C_0(X^n)\ni f^{(n)}\mapsto \la P_n(\omega),f^{(n)}\ra\in L^2(\tau)$$
to a linear continuous operator 
$$L^2(X^n,m_\nu^{(n)})\ni f^{(n)}\mapsto \la P_n(\omega),f^{(n)}\ra\in L^2(\tau).$$ 
Note that, by Theorem~\ref{utu8}, for each $f^{(n)}\in L^2(X^n,m_\nu^{(n)})$,
$$\la P_n(\omega),f^{(n)}\ra=\la P_n(\omega),\Sym_nf^{(n)}\ra.$$

Thus, Theorem \ref{utu8} immediately implies

\begin{corollary}\label{igt87}
We have a unitary isomorphism
\begin{equation}\label{yut8t}
\mathbf F^{Q}(\mathcal H,\nu)\ni (f^{(n)})_{n=0}^\infty
\mapsto f^{(0)}+\sum_{n=1}^\infty \la P_n(\omega), f^{(n)}\ra\in L^2(\tau).\end{equation}
\end{corollary}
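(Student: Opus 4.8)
The plan is to assemble the unitary degree by degree, using Theorem~\ref{utu8} as the isometry statement on each homogeneous chaos, together with the orthogonal decomposition $L^2(\tau)=\bigoplus_{n\ge0}\mathscr{OP}_n$ from \eqref{tyr65} and the definition $\mathbf F^Q(\mathcal H,\nu)=\bigoplus_{n\ge0}\mathbf F^Q_n(\mathcal H,\nu)$.

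First I would fix $n\in\mathbb N$ and consider the linear map $V_n\colon C_0(X^n)\to\mathscr{OP}_n$ given by $V_nf^{(n)}:=\la P_n(\omega),f^{(n)}\ra$. By Theorem~\ref{utu8}, $\|V_nf^{(n)}\|_{L^2(\tau)}^2=\|\Sym_nf^{(n)}\|_{L^2(X^n,m_\nu^{(n)})}^2$, and by Proposition~\ref{yfd6rd6}, $\Sym_n$ is the orthogonal projection of $L^2(X^n,m_\nu^{(n)})$ onto $\mathbf F^Q_n(\mathcal H,\nu)$. Since $C_0(X^n)$ is dense in $L^2(X^n,m_\nu^{(n)})$, the map $V_n$ extends by continuity to all of $L^2(X^n,m_\nu^{(n)})$; its kernel is $\ker\Sym_n=\mathbf F^Q_n(\mathcal H,\nu)^\perp$, so $V_n$ descends to an isometry $\overline{V}_n\colon\mathbf F^Q_n(\mathcal H,\nu)\to\mathscr{OP}_n$ (equivalently, $\la P_n(\omega),f^{(n)}\ra=\la P_n(\omega),\Sym_nf^{(n)}\ra$, as already observed after Theorem~\ref{utu8}).

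Next I would check that $\overline{V}_n$ is onto. By construction $\la P_n(\omega),f^{(n)}\ra$ is the orthogonal projection of the continuous monomial $\la\omega^{\otimes n},f^{(n)}\ra$ onto $\mathscr{OP}_n$; as $f^{(n)}$ ranges over $C_0(X^n)$, these monomials together with continuous polynomials of order $<n$ span a dense subspace of $\mathscr{MP}_n$ (this is exactly the definition of $\mathscr{MP}_n$ as the $L^2(\tau)$-closure of $\mathscr{CP}_n$), hence their images under the orthogonal projection $\mathscr{MP}_n\to\mathscr{OP}_n$ span a dense subspace of $\mathscr{OP}_n$. Since the range of an isometry on a Hilbert space is closed, $\overline{V}_n$ is surjective, hence unitary. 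For $n=0$ the map is trivially unitary, sending $c\Omega\in\mathbf F^Q_0(\mathcal H,\nu)$ to the constant $c\in\mathscr{OP}_0$.

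Finally I would form the orthogonal direct sum $\bigoplus_{n\ge0}\overline{V}_n$. Because the summands act between mutually orthogonal subspaces that exhaust $\mathbf F^Q(\mathcal H,\nu)$ and $L^2(\tau)$ respectively, this direct sum is unitary, and on the $n$-th component it sends $(f^{(n)})$ to $\la P_n(\omega),f^{(n)}\ra$; this is precisely the map \eqref{yut8t}. There is no serious obstacle here, as Theorem~\ref{utu8} carries all the analytic content; the only point needing a line of care is the surjectivity onto $\mathscr{OP}_n$, i.e.\ that the continuous monomials $\la\omega^{\otimes n},f^{(n)}\ra$ with $f^{(n)}\in C_0(X^n)$ are, modulo lower order terms, dense in $\mathscr{MP}_n$.
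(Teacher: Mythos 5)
Your proof is correct and follows exactly the route the paper intends: the paper derives the corollary as an immediate consequence of Theorem~\ref{utu8} together with the decompositions \eqref{tyr65} and $\mathbf F^{Q}(\mathcal H,\nu)=\bigoplus_n\mathbf F^Q_n(\mathcal H,\nu)$, and you have simply written out the routine degree-by-degree details (isometry on each chaos, surjectivity onto $\mathscr{OP}_n$ via density of $\mathscr{CP}_n$ in $\mathscr{MP}_n$, and the orthogonal direct sum).
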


We denote the inverse of the unitary operator in \eqref{yut8t} by $U$. Thus, $U:L^2(\tau)\to \mathbf F^{Q}(\mathcal H,\nu)$ is a unitary operator; compare with Theorem~\ref{uyt8t8} (ii) in the boson case, $q=1$.

\subsection{Anyon L\'evy white noise as a Jacobi field}

In view of subsec.~\ref{utfr57rtf} and Corollary~\ref{igt87}, we have the following chain of unitary operators:
$$ \mathbf F^{Q}(\mathcal H,\nu)\overset {\text{}\ U}{\leftarrow} L^2(\tau)\overset I\to \mathcal F^Q(\mathcal G).$$
We also define a unitary operator 
$$\mathbf U:\mathbf F^{Q}(\mathcal H,\nu)\to \mathcal F^Q(\mathcal G),\quad \mathbf U:=IU^{-1}.$$

Let $h\in C_0(X)$. Recall formula \eqref{tyr75}, which says that, under $I^{-1}$, the operator $\la\omega,h\ra$ in 
$\mathcal F^Q(\mathcal G) $ becomes the operator of left multiplication by $\la\omega,h\ra$ in $L^2(\tau)$.  
We denote 
\begin{equation}\label{utr7r}
\mathbf J(h):=\mathbf U^{-1}\la\omega,h\ra \mathbf U.
\end{equation}
 Obviously, the operators $\mathbf J(h)$ form a Jacobi field  in the extended anyon Fock space $\mathbf F^{Q}(\mathcal H,\nu)$, i.e.,  each operator $\mathbf J(h)$ has a representation
\begin{equation}\label{iut78}
\mathbf J(h)=\mathbf J^+(h)+\mathbf J^0(f)+\mathbf J^-(h),\end{equation}
where $\mathbf J^+(h)$ is a creation operator, $\mathbf J^0(h)$ is a neutral operator, and $\mathbf J^-(h)$ is an annihilation operator. Equivalently, we have
$$\la \omega,h\ra\la P_n(\omega),f^{(n)}\ra=\la P_{n+1}(\omega),\mathbf J^+(h) f^{(n)}\ra +\la   P_{n}(\omega),\mathbf J^0(h) f^{(n)}\ra
+\la P_{n-1}(\omega),\mathbf J^-(h) f^{(n)}\ra. $$
Our next aim is to explicitly calculate the operators  $\mathbf J^\sharp(h)$, $\sharp=+,0,-$.

We define a linear space  $\mathcal F_{\mathrm{fin}}(B_0(X))$ of all finite vectors $(f^{(0)},f^{(1)},\dots,f^{(n)},0,0,\dots)$, where $f^{(0)}\in\mathbb C$, $f^{(i)}\in B_0(X^i)$, $i\ge1$. Evidently, the vacuum vector, $\Omega$, belongs to $\mathcal F_{\mathrm{fin}}(B_0(X))$.

For each $h\in C_0(X)$, we define   a neutral operator $\mathscr J^0(h)$ and an annihilation operator $\mathscr J_1^-(h)$
acting on $\mathcal F_{\mathrm{fin}}(B_0(X))$ as follows. We first set
\begin{equation}\label{uiti}
\mathscr J^0(h)\Omega=
 \mathscr J_1^-(h)\Omega:=0.\end{equation}
  Next,
 \begin{equation}\label{ir688u}\mathscr (\mathscr J^0(h)f^{(n)} )(x_1,\dots,x_n):=
\sum_{i=1}^n h(x_i)f^{(n)}(x_1,\dots,x_n)R^{(n)}_i(x_1,\dots,x_n).\end{equation}
Here, for each $\theta=\{\theta_1,\dots,\theta_l\}\in\Pi(n)$, the restriction of the function $R^{(n)}_i:X^n\to\R$ to the set $X_\theta^{(n)}$ is given by
\begin{equation}\label{ho9t97tg}R_i^{(n)}\restriction  X_\theta^{(n)} :=b_{\gamma(i,\theta)-1}\,/\gamma(i,\theta)\end{equation}
 In formula \eqref{ho9t97tg},  $\gamma(i,\theta):=|\theta_u|$ with
  $\theta_u\in\theta$ being chosen so that $i\in \theta_u$, and $(b_k)_{k=0}^\infty$ are the coefficients from \eqref{hdtrss}. Finally,
 \begin{multline}\label{igftr7y}(\mathscr J_1^-(h)f^{(n)})(x_1,\dots,x_{n-1})
 \\:=
\sum_{1\le i<j\le n}h(x_{j-1})f^{(n)}(x_1,\dots,x_{i-1},
\underbrace{x_{j-1}}_{\text{$i$-th place}},x_i,x_{i+1},\dots,\underbrace{x_{j-1}}_{\text{$j$-th place}},\dots,x_{n-1})\\
\times
S^{(n)}_{j-1}(x_1,\dots,x_{n-1}),\end{multline}
 where for any $\theta\in\Pi(n-1)$
\begin{equation}\label{oduhuaijo}S_{j-1}^{(n)}\restriction  X_\theta^{(n-1)} :=\frac{
2a_{\gamma(j-1,\theta)}}{\gamma(j-1,\theta)(\gamma(j-1,\theta)+1)}\,.\end{equation}
 Here  $(a_k)_{k=1}^\infty$ are also the coefficients from \eqref{hdtrss}. 

We define 
$$ \mathbf F_{\mathrm{fin}}^{Q}(B_0(X)):=\Sym\, \mathcal F_{\mathrm{fin}}(B_0(X)),$$
where $\Sym$ is the linear operator on $\mathcal F_{\mathrm{fin}}(B_0(X))$ satisfying $\Sym f^{(n)}:=\Sym_n f^{(n)}$ for $f^{(n)}\in B_0(X^n)$. We also denote $\mathbf B_0^Q(X^n):=\Sym_n B_0(X^n)$.

On $\mathbf F_{\mathrm{fin}}^{Q}(B_0(X))$, we define a $Q$-symmetric tensor product by setting, for any $f^{(m)}\in \mathbf B_0^Q(X^m)$, $g^{(m)}\in \mathbf B_0^Q(X^n)$,
\begin{equation}\label{vytfr7i6ed}
f^{(m)} \cd g^{(n)}:=\Sym_{m+n} (f^{(m)} \otimes g^{(n)}), \end{equation}
and extending it by linearity. 
Here $f^{(m)}\otimes g^{(n)}\in B_0(X^{m+n})$ is given by 
$$ (f^{(m)}\otimes g^{(n)})(x_1,\dots,x_{m+n})=f^{(m)}(x_1,\dots,x_m)g^{(n)}(x_{m+1},\dots,x_{m+n}).$$
We will prove below that the tensor product $\cd$ is associative. Furthermore, the restriction of $f^{(m)} \cd g^{(n)}$ to $X^{(m+m)}$ evidently coincides with $f^{(m)} \cd g^{(n)}$ as defined in subsec.~\ref{hfgyufzua}.

\begin{theorem}\label{fu7r7} For each $h\in C_0(X)$, 
$\mathbf J(h)$ is a linear operator on  $\mathbf F_{\mathrm{fin}}^{ Q}(B_0(X))$ which has representation 
\eqref{iut78}. 
For each $F\in \mathbf F_{\mathrm{fin}}^{Q}(B_0(X))$, we have
\begin{align*}
\mathbf J^+(h) F&=h\cd F,\\
\mathbf J^0(h)F&=\Sym(\mathscr J^0(h)F),
\end{align*}
and 
\begin{equation}\label{yur75}\mathbf J^-(h)=\mathbf J_1^-(h)+\mathbf J_2^-(h).
\end{equation}
 Here, 
$$\mathbf J^-_1(h) F=\Sym (\mathscr J_1^-(h)F)$$
and for each $f^{(n)}\in \mathbf B_0^Q(X^n)$ 
\begin{equation}
(\mathbf J_2^-(h)f^{(n)})(x_1,\dots,x_{n-1})=n\int_X dy\, h(y)f^{(n)}(y,x_1,\dots,x_{n-1}).\label{ufcxyf}\end{equation}
\end{theorem}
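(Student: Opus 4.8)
The plan is to carry out the computation in the anyon Fock space $\mathcal F^Q(\mathcal G)$ and then read off the answer in $\mathbf F^Q(\mathcal H,\nu)$ through the unitary $\mathbf U=IU^{-1}$. By \eqref{tyr75}, under $\mathbf U$ the operator $\mathbf J(h)$ becomes left multiplication by $\la\omega,h\ra$, which by \eqref{g7r75e} equals $a^+(h\otimes 1)+a^0(h\otimes\id)+a^-(\bar h\otimes 1)$ (the annihilation term carries $\bar h$ for complex $h$), acting on the vectors $\la P_n(\omega),f^{(n)}\ra\Omega=\mathbf U\,\Sym_n f^{(n)}$. The first step is therefore to record, explicitly in $\mathcal F^Q(\mathcal G)$, the form of $\la P_n(\omega),f^{(n)}\ra\Omega$ --- this is the natural companion of Theorem~\ref{utu8} and is produced in the course of proving it: the vector is a sum over partitions $\theta=\{\theta_1,\dots,\theta_l\}\in\Pi(n)$ satisfying \eqref{ft6e}, the $\theta$-summand lying in $\mathcal G^{\cd l}$ and being the $Q$-symmetrization of the function obtained by restricting $f^{(n)}$ to the partial diagonal $X^{(l)}_\theta$ (identifying the coordinates inside each block) and decorating the $j$-th $\mathbb R$-variable with the monic orthogonal polynomial $p_{|\theta_j|-1}$. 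For instance, for $n=2$ the vector has a degree-$2$ part equal to the $Q$-symmetrization of $f^{(2)}(x_1,x_2)$ (trivially decorated in the $\mathbb R$-variables, since $p_0=1$) and a degree-$1$ part $f^{(2)}(x,x)\,p_1(s)$; using $\int_{\mathbb R}p_{k-1}^2\,d\nu=c_k$ this already yields Theorem~\ref{utu8}.

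Next I would apply $\la\omega,h\ra$ to this vector via the explicit formulas \eqref{ufcu}, \eqref{hyfd7urf}, \eqref{jfiu}, \eqref{gtdytlut9t8ghogt}, and sort the outcome according to the three-term recursion \eqref{hdtrss}, $s\,p_k(s)=p_{k+1}(s)+b_kp_k(s)+a_kp_{k-1}(s)$, applied to each decoration under the multiplication by $\id$ hidden in $a^0(h\otimes\id)$. The bookkeeping splits as follows. The operator $a^+(h\otimes 1)$ adjoins a fresh $p_0$-decorated variable (a new singleton block), while the $p_{k+1}$-term of $a^0(h\otimes\id)$ replaces $p_{|\theta_j|-1}$ by $p_{|\theta_j|}$ in the $j$-th slot (enlarges a block by one); both contributions together assemble into $\Sym_{n+1}(h\otimes\,\cdot\,)=h\cd(\cdot)$, so $\mathbf J^+(h)F=h\cd F$. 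This identity can also be seen abstractly: since $\la P_n(\omega),f^{(n)}\ra-\la\omega^{\otimes n},f^{(n)}\ra\in\mathscr{MP}_{n-1}$ and left multiplication by $\la\omega,h\ra$ maps $\mathscr{MP}_{n-1}$ into $\mathscr{MP}_n\perp\mathscr{OP}_{n+1}$, the $\mathscr{OP}_{n+1}$-component of $\la\omega,h\ra\la P_n(\omega),f^{(n)}\ra$ equals the $\mathscr{OP}_{n+1}$-projection of $\la\omega^{\otimes(n+1)},h\otimes f^{(n)}\ra$, namely $\la P_{n+1}(\omega),h\otimes f^{(n)}\ra$, which $U$ carries to $\Sym_{n+1}(h\otimes f^{(n)})=h\cd\Sym_n f^{(n)}$ by the nesting identity $\Sym_{n+1}(h\otimes\Sym_n f^{(n)})=\Sym_{n+1}(h\otimes f^{(n)})$. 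The $b_k$-term of $a^0(h\otimes\id)$ leaves the partition unchanged and multiplies the $\theta$-summand by $h(x_i)\,b_{|\theta_u|-1}$ summed over the slots; dividing by $|\theta_u|$ to compensate for the equal coordinates inside a block reproduces exactly the coefficient $R^{(n)}_i\restriction X^{(n)}_\theta=b_{\gamma(i,\theta)-1}/\gamma(i,\theta)$ of \eqref{ho9t97tg}, hence $\mathbf J^0(h)F=\Sym(\mathscr J^0(h)F)$. The $p_{k-1}$-term of $a^0(h\otimes\id)$ shrinks a block by one element and produces $\mathbf J_1^-(h)=\Sym\circ\mathscr J_1^-(h)$, with the coefficient $S^{(n)}_{j-1}$ in \eqref{oduhuaijo} collecting the factor $c_{\gamma+1}/c_\gamma=a_\gamma$ together with the symmetrization multiplicities, while $a^-(\bar h\otimes 1)$ integrates each decoration against $1=p_0$ and so survives only on singleton slots, deleting one variable and yielding $\mathbf J_2^-(h)$ as in \eqref{ufcxyf}. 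As a consistency check, $\la\omega,h\ra$ is self-adjoint on $\mathcal F^Q(\mathcal G)$, so $\mathbf J^-(h)=(\mathbf J^+(h))^*$; computing the adjoint of $F\mapsto h\cd F$ against the inner product of $\mathbf F^Q(\mathcal H,\nu)$ --- using that in passing from $m_\nu^{(n)}$ to $m_\nu^{(n+1)}$ a variable either forms a fresh singleton (no extra factor, $c_1=1$) or joins a block of size $m$ (extra factor $c_{m+1}/c_m=a_m$) --- reproduces $\mathbf J_1^-(h)+\mathbf J_2^-(h)$.

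It remains to check the structural points: that the tensor product $\cd$ on $\mathbf F^Q_{\mathrm{fin}}(B_0(X))$ is associative and that each $\mathbf J(h)$ maps $\mathbf F^Q_{\mathrm{fin}}(B_0(X))$ into itself. Both follow from the cocycle property of the weights $Q_\pi$ in \eqref{y7e57ie} underlying the symmetrizations \eqref{tr866} --- which gives $\Sym_{m+n}(\Sym_m f^{(m)}\otimes\Sym_n g^{(n)})=\Sym_{m+n}(f^{(m)}\otimes g^{(n)})$ --- together with the fact that $a^+(h\otimes 1)$, $a^0(h\otimes\id)$ and $a^-(\bar h\otimes 1)$ act on the relevant finite-particle subspaces. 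I expect the main obstacle to be the interior-annihilation part $\mathbf J_1^-(h)$: one must keep track of the $Q$-phases that a variable accumulates while being moved out of the middle of a block under $\Sym$ --- precisely the product $Q(y,x_1)\dotsm Q(y,x_{k-1})$ appearing in \eqref{hyfd7urf} --- and then verify that, after summing over the blocks and over $\mathfrak S_n$, the combinatorial multiplicities collapse to the single coefficient $\dfrac{2a_{\gamma(j-1,\theta)}}{\gamma(j-1,\theta)(\gamma(j-1,\theta)+1)}$ of \eqref{oduhuaijo}. Everything else is a finite, if lengthy, reorganization of sums over $\Pi(n)$ and $\mathfrak S_n$.
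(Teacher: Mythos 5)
Your proposal is correct and follows essentially the same route as the paper: transport $\mathbf J(h)$ through $\mathbf U$ to left multiplication by $\la\omega,h\ra$, use the explicit partition-sum representation $I\la P_n(\omega),f^{(n)}\ra=\Sym\sum_\theta\mathcal E_\theta f^{(n)}$, split $a^0(h\otimes\id)$ by the three-term recursion into $d\Gamma(M_h\otimes A^\pm)$ and $d\Gamma(M_h\otimes A^0)$, and match the four resulting pieces with $\mathbf J^+,\mathbf J^0,\mathbf J_1^-,\mathbf J_2^-$, with the block-shrinking count $|\theta_k|(|\theta_k|-1)/2$ yielding the coefficient in \eqref{oduhuaijo}. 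One small correction: the phases $Q(y,x_1)\dotsm Q(y,x_{k-1})$ of \eqref{hyfd7urf} arise in the $a^-(h\otimes 1)$ contribution, i.e.\ in $\mathbf J_2^-(h)$ (where the $Q$-symmetry of $f^{(n)}$ cancels them, giving \eqref{ufcxyf}), not in $\mathbf J_1^-(h)$, whose derivation is purely combinatorial since $d\Gamma(M_h\otimes A^-)$ acts slot-wise.
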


\subsection{A characterization of Meixner-type polynomials}
Recall that the operators $\mathscr J^0(h)$ and $\mathscr J^-(h)$ were defined by using the coefficients of the recursion relation \eqref{hdtrss} (i.e., by the measure $\nu$), and these operators do not depend on the type of anyon statistics, i.e., they are independent of $Q$.   

Recall the set of orthogonalized continuous polynomials, $\mathscr{OCP}$, defined in subsec.~\ref{utfr57rtf}. Let us consider the following condition.   

\begin{enumerate}
\item[(C)] For each $h\in C_0(X\mapsto\R)$, the linear operators  $\mathbf J^0(h)$ and   $\mathbf J^-_1(h)$ map the set $\mathscr{OCP}$ into itself.
\end{enumerate}

\begin{theorem}\label{urr8r} Assume that either $q\ne- 1$ or $q=-1$ and the support of the measure $\nu$ does not consist of exactly two points. Then condition {\rm (C)} is satisfied if and only if there exist constants $\lambda\in\mathbb R$ and $\eta\ge0$ such that the coefficients $a_k$, $b_k$ in the recursion formula \eqref{hdtrss} are given by
\begin{equation}\label{vggyufd7u}a_k=\eta k(k+1)\quad
(k\in\mathbb N),\quad
b_k=\lambda(k+1)\quad (k\in\mathbb N_0).\end{equation}
In the latter case, for any $h, f_1,\dots,f_n\in C_0(X)$, we have
\begin{align}
&\mathbf J(h) f_1\cd\dots\cd f_n=
h\cd f_1\cd\dots\cd f_n\notag\\
&\quad\text{}+\lambda
 \sum_{i=1}^n  f_1\cd \dots\cd f_{i-1}\cd (hf_i)\cd f_{i+1}\cd \dots\diamond f_n\notag\\
&\quad\text{}+2\eta\sum_{1\le i<j\le n} f_1\cd \dots\cd
f_{i-1}\cd f_{i+1}\cd \dots \cd f_{j-1}\cd
 (hf_if_j)\cd f_{j+1}\cd\dots\cd f_n\notag\\
&\quad\text{} + n \int_X dy\, h(y)(f_1\cd\dots\cd f_n)(y,\cdot).\label{yufu7edseaa}
  \end{align}
\end{theorem}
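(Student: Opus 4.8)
The plan is to leverage Theorem~\ref{fu7r7}, which already gives the explicit action of $\mathbf J(h)=\mathbf J^+(h)+\mathbf J^0(h)+\mathbf J^-_1(h)+\mathbf J^-_2(h)$ on $\mathbf F^Q_{\mathrm{fin}}(B_0(X))$, and to reduce condition~(C) to an algebraic condition on the coefficients $(a_k),(b_k)$. Since $\mathbf J^+(h)$ and $\mathbf J^-_2(h)$ manifestly preserve $\mathscr{OCP}$ (the first is $h\cd\,\cdot$, the second is essentially the standard annihilation $\partial$-operator, and both correspond under $U$ to operators built only from the $\cd$-structure), the substance of~(C) concerns the operators $\mathbf J^0(h)=\Sym(\mathscr J^0(h)\,\cdot)$ and $\mathbf J^-_1(h)=\Sym(\mathscr J^-_1(h)\,\cdot)$. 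The first thing I would do is translate, via Corollary~\ref{igt87}, the requirement ``$\mathbf J^0(h)$ and $\mathbf J^-_1(h)$ map $\mathscr{OCP}$ into itself'' into ``$\mathscr J^0(h)$ and $\mathscr J^-_1(h)$ map $\mathbf F^Q_{\mathrm{fin}}(B_0(X))$ into itself after $\Sym$''. Looking at formulas \eqref{ir688u}--\eqref{oduhuaijo}, the functions $R^{(n)}_i$ and $S^{(n)}_{j-1}$ are locally constant on the strata $X^{(n)}_\theta$ with values $b_{\gamma-1}/\gamma$ and $2a_\gamma/(\gamma(\gamma+1))$; the output of $\mathscr J^0(h)f^{(n)}$ or $\mathscr J^-_1(h)f^{(n)}$ is again such a ``stratified'' function, and the key point is when the projection $\Sym_n$ of such a function lands back in $\mathbf F^Q_n(\mathcal H,\nu)$, equivalently when $\mathscr J^\sharp(h)$ already produces something whose $\Sym$-image represents an element expressible through the $\la P_n,\cdot\ra$. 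I would test this on the generating monomials $f_1\cd\dots\cd f_n$ with the $f_i$ supported so as to force prescribed coincidence patterns (this is the standard trick: by choosing supports one isolates a single stratum $X^{(n)}_\theta$ and reads off the constant there).

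The heart of the argument is then a recursion/consistency computation. Applying $\mathscr J^0(h)$ to $f_1\otimes\cdots\otimes f_n$ and symmetrizing, one sees that $\mathbf J^0(h)$ preserves $\mathscr{OCP}$ for all $h$ iff the weights $b_{\gamma(i,\theta)-1}/\gamma(i,\theta)$ assemble into a $Q$-symmetric function after the action of $\Sym_n$ — and this forces the value $b_{k-1}/k$ to be independent of $k$ when $i$ sits in a block of size $k$, which is exactly $b_{k-1}/k = \text{const}$, i.e.\ $b_k=\lambda(k+1)$ for some $\lambda$. Similarly, the $S^{(n)}_{j-1}$-weight $2a_\gamma/(\gamma(\gamma+1))$ must be independent of the block size $\gamma$, forcing $a_k = \eta k(k+1)$ for some $\eta\ge 0$ (nonnegativity because $a_k>0$ when the block exists, with the obvious degenerate cases when $\nu$ has finite support). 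The converse direction — that \eqref{vggyufd7u} implies~(C) and moreover implies the clean formula \eqref{yufu7edseaa} — is then a direct substitution: with these coefficients the functions $R^{(n)}_i$ become the constant $\lambda$ and $S^{(n)}_{j-1}$ become the constant $2\eta$ on every stratum, the $\Sym$ wrapping becomes harmless, and $\mathscr J^0(h)$, $\mathscr J^-_1(h)$ act by literally inserting $h$ multiplicatively into one slot (resp.\ contracting two slots into one with a factor $h$), which after symmetrization is precisely the second and third lines of \eqref{yufu7edseaa}; the fourth line is $\mathbf J^-_2(h)$ from \eqref{ufcxyf} and the first line is $\mathbf J^+(h)=h\cd\,\cdot$.

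The role of the hypothesis ``$q\ne-1$, or $q=-1$ and $\operatorname{supp}\nu\ne$ two points'' is to guarantee that the stratified test functions one uses to probe~(C) do not get annihilated by $\Sym_n$ for spurious reasons. When $q=-1$ and $\nu$ is supported on exactly two points, $\mathcal H^{\cd n}$ and the relevant extended spaces collapse in low degrees ($p_k=0$ for $k\ge2$, $c_k=0$ for $k\ge3$, plus fermionic vanishing), so the constraints that normally pin down $b_k$ and $a_k$ become vacuous in the degrees where one could detect them; I would isolate this as a separate remark and simply exclude it. I expect the main obstacle to be bookkeeping with the symmetrization operator $\Sym_n$ on stratified functions — one must check carefully that $\Sym_n$ of a function constant on strata $X^{(n)}_\theta$ with stratum-dependent constants is $Q$-symmetric (hence in $\mathbf F^Q_n$) precisely when those constants are compatible across strata related by the $\widehat\pi$-action, and that choosing supports genuinely isolates one constant at a time without interference from the $Q_\pi$ phase factors. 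This is where the structure of $Q_\pi$ on $X^{(n)}_\theta$ from \eqref{uyr75rw} (depending only on the induced permutation $\widehat\pi\in\mathfrak S_l$ of blocks) does the real work: it shows the $Q$-phases factor through the block structure, so $Q$-symmetry of the output reduces to a genuinely finite-dimensional (level-$l$) condition that is insensitive to $q$ as long as $q\ne-1$ or the degenerate two-point case is excluded.
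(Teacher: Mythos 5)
Your sufficiency direction is sound and matches the paper: under \eqref{vggyufd7u} the weights $R_i^{(n)}$ and $S_{j-1}^{(n)}$ collapse to the constants $\lambda$ and $2\eta$, the operators $\mathscr J^0(h)$ and $\mathscr J_1^-(h)$ preserve $\mathcal F_{\mathrm{fin}}(C_0(X))$, and \eqref{yufu7edseaa} drops out of Theorem~\ref{fu7r7}. The necessity direction, however, rests on a misidentified obstruction. You claim that (C) holds iff the stratified weights ``assemble into a $Q$-symmetric function after $\Sym_n$.'' But $\Sym_n$ is the orthogonal projection onto $\mathbf F_n^Q(\mathcal H,\nu)$ (Proposition~\ref{yfd6rd6}), so its output is $Q$-symmetric no matter what; worse, the multiplier $\sum_i h(x_i)R_i^{(n)}$ in \eqref{ir688u} is genuinely permutation-invariant (on the stratum $X_\theta^{(n)}$ it equals $\sum_u h(x_{\theta_u})\,b_{|\theta_u|-1}$), so $\mathscr J^0(h)$ carries $Q$-symmetric functions to $Q$-symmetric functions for \emph{arbitrary} coefficients $(b_k)$. $Q$-symmetry therefore never fails and forces nothing. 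What (C) actually demands is that $\Sym_n\mathscr J^0(h)f^{(n)}$ coincide $m_\nu^{(n)}$-a.e.\ with $\Sym_n u^{(n)}$ for some \emph{continuous} $u^{(n)}\in C_0(X^{n})$, and the constraints on $(a_k),(b_k)$ come from comparing the values such a $u^{(n)}$ is forced to take on the diagonal with the values $\mathscr J^0(h)f^{(n)}$ actually takes there.

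Two concrete ingredients needed for that comparison are absent from your sketch. First, one needs the lemmas (the paper's Lemmas~\ref{hiwfgwy8} and~\ref{uit8ot}) asserting that if $\Sym_n(g-u)$ vanishes a.e.\ on a single stratum such as $X^{(n)}_{\{\{1\},\{2,\dots,n\}\}}$, then $g=u$ on the full diagonal; the coefficient that appears there is $(n-1+q)/n$, which is exactly where the hypothesis ``$q\ne-1$, or $q=-1$ and $n\ge3$'' enters --- not, as you suggest, through a collapse of the spaces themselves. Second, probing with the single family of partitions $\{\{1\},\{2,\dots,n\}\}$ does \emph{not} force $b_{k-1}/k$ to be constant: it only yields $b_k=\lambda(k+1)+\varepsilon$ with an undetermined $\varepsilon$ (and the analogous recursion \eqref{iyr8lotrg} for the $a_k$), and a second, independent consistency check on the partition $\{\{1,2\},\{3,4,5\}\}$ of five points is required to conclude $\varepsilon=0$. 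Finally, when $\operatorname{supp}\nu$ is finite with $l\ge2$ points this is not a ``degenerate case'' to be waved away: one must show that (C) \emph{fails}, which the paper does by running the recursion up to $a_l$ and deriving the contradiction $a_l>0$ versus $a_l=0$. Without these three pieces the necessity half of the theorem is not proved.
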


We see that, in the classical case, $q=1$, Theorem~\ref{urr8r} gives exactly the Meixner class of infinite dimensional polynomials, discussed in subsec.~\ref{fytry}. Note that the obtained class of the measures $\nu$ is independent on $q$. So, for such a choice of $\nu$, we  call  $\big(\la P_n(\omega),f^{(n)}\ra\big)$ a Meixner-type system of orthogonal (noncommutative) polynomials for anyon statistics.

\begin{remark}
In the fermion case ($q=-1$), if the support of the measure $\nu$ consists of  exactly two points, we could not 
prove that condition (C) always fails, but we conjecture this indeed to be the case.
\end{remark}

The following result can be easily proven.

\begin{proposition}\label{ytfr7} For each $q\in\mathbb C$, $|q|=1$ we have  equality $\mathscr{CP}=\mathscr{OCP}$ in the anyon Gaussian/Poisson case, i.e., when formula \eqref{vggyufd7u} holds with $\lambda\in\R$ and $\eta=0$.
\end{proposition}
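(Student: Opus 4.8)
The plan is to show $\mathscr{CP}=\mathscr{OCP}$ in the anyon Gaussian/Poisson case ($\eta=0$) by an induction on the order $n$ of continuous monomials. Since $\mathscr{OCP}\subset\mathscr{CP}$ is not automatic (the function $\la P_n(\omega),f^{(n)}\ra$ need not a priori be a continuous polynomial), the real content of the equality is the two inclusions: (a) each $\la P_n(\omega),f^{(n)}\ra$ lies in $\mathscr{CP}$, and (b) each continuous monomial $\la\omega^{\otimes n},f^{(n)}\ra$ lies in $\mathscr{OCP}$, i.e.\ can be written as a finite sum of orthogonalized continuous polynomials. Both inclusions follow once we know that, in the case $\eta=0$, the recursion data is $a_k=0$, $b_k=\lambda(k+1)$, so that the measure $\nu$ is concentrated at the single point $\lambda$ (indeed $a_k=0$ forces $p_1(s)=s-b_0=s-\lambda$ and then $\nu=\delta_\lambda$ by the structure of orthogonal polynomials with a degenerate recursion, cf.\ the remark after \eqref{yufrur}). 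Hence $c_1=1$ and $c_k=0$ for $k\ge2$, so $m_\nu^{(n)}=n!\,m^{\otimes n}$ is supported on $X^{(n)}$ and $\mathbf F^Q(\mathcal H,\nu)=\mathcal F^Q(\mathcal H)$, the ordinary anyon Fock space.

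The main step is then to exhibit the Jacobi field explicitly. By Theorem~\ref{urr8r} (applicable here since $\eta=0$ certainly falls under the hypothesis "$q\ne-1$, or $q=-1$ and $\nu$ not supported on exactly two points", as $\nu=\delta_\lambda$ is a one-point measure), condition (C) holds and formula \eqref{yufu7edseaa} specializes, with $\eta=0$, to
\begin{equation*}
\mathbf J(h)\, f_1\cd\dots\cd f_n = h\cd f_1\cd\dots\cd f_n + \lambda\sum_{i=1}^n f_1\cd\dots\cd(hf_i)\cd\dots\cd f_n + n\int_X dy\,h(y)(f_1\cd\dots\cd f_n)(y,\cdot).
\end{equation*}
In the Fock-space picture this is $\mathbf J(h)=a^+(h)+\lambda a^0(h)+a^-(h)$ acting on the ordinary anyon Fock space over $\mathcal H$ (cf.\ \eqref{vgfi}, \eqref{vcydcyd}), and there is no "extension" term: $\mathbf J^-_2$ is the only annihilation piece and $\mathbf J^-_1=0$ because $a_k=0$ gives $S^{(n)}_{j-1}=0$ in \eqref{oduhuaijo}. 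The upshot is that, translating through the unitary $U$ of Corollary~\ref{igt87}, left multiplication by $\la\omega,h\ra$ sends each $\la P_n(\omega),f^{(n)}\ra$ with $f^{(n)}=f_1\cd\dots\cd f_n$ to a sum of one term of order $n+1$, one of order $n$, and one of order $n-1$, each again of the form $\la P_m(\omega),g^{(m)}\ra$ with $g^{(m)}$ a $\cd$-product (or sum thereof) of functions from $C_0(X)$.

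From here the induction closes quickly. For inclusion (a): $\la P_0(\omega),\cdot\ra$ and $\la P_1(\omega),f\ra=\la\omega,f\ra$ lie in $\mathscr{CP}$; assuming $\la P_k(\omega),g^{(k)}\ra\in\mathscr{CP}$ for all $k\le n$ and all $g^{(k)}$, the displayed recursion (solved for the order-$(n+1)$ term) expresses $\la P_{n+1}(\omega),h\cd f^{(n)}\ra$ as $\la\omega,h\ra\la P_n(\omega),f^{(n)}\ra$ minus lower-order $\la P_{\le n}(\omega),\cdot\ra$ terms, hence it lies in $\mathscr{CP}$; since functions of the form $h\cd f^{(n)}$ span $\mathcal H^{\cd(n+1)}\supset C_0(X^{n+1})$-symmetrizations densely and the map $g^{(n+1)}\mapsto\la P_{n+1}(\omega),g^{(n+1)}\ra$ factors through $\Sym_{n+1}$, this gives $\la P_{n+1}(\omega),g^{(n+1)}\ra\in\mathscr{CP}$ for all $g^{(n+1)}\in C_0(X^{n+1})$. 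For inclusion (b): $\la\omega^{\otimes 0},\cdot\ra$ and $\la\omega,f\ra=\la P_1(\omega),f\ra$ are in $\mathscr{OCP}$; and $\la\omega^{\otimes(n+1)},f_1\otimes\dots\otimes f_{n+1}\ra=\la\omega,f_1\ra\cdots\la\omega,f_{n+1}\ra$ is obtained from $\la\omega,f_1\ra\la\omega^{\otimes n},f_2\otimes\dots\otimes f_{n+1}\ra$, which by the induction hypothesis is $\la\omega,f_1\ra$ times an element of $\mathscr{OCP}$, and then applying the recursion termwise keeps us in $\mathscr{OCP}$. The anticipated obstacle is purely bookkeeping: the order-$n$ "neutral" term $\lambda\sum_i f_1\cd\dots\cd(hf_i)\cd\dots\cd f_n$ and the order-$(n-1)$ term must be recognized as genuine elements of $\mathscr{OCP}$ — this is where one needs the identification $\mathbf F^Q(\mathcal H,\nu)=\mathcal F^Q(\mathcal H)$ (so that $\la P_m(\omega),\cdot\ra$ is defined on all of $C_0(X^m)$ via $\Sym_m$ with no measure-theoretic subtlety on the diagonals) together with a density argument to pass from pure tensors $f_1\otimes\dots\otimes f_n$ to arbitrary $f^{(n)}\in C_0(X^n)$. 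None of this is deep in the $\eta=0$ case; the proposition is exactly the degenerate, "no diagonal" instance of the general Theorem~\ref{urr8r} machinery, which is why the paper remarks it "can be easily proven."
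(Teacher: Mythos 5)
The paper leaves this proof to the reader, so there is no official argument to compare against line by line; your overall strategy---reduce to $\nu=\delta_\lambda$, observe $c_k=0$ for $k\ge2$ so that $m_\nu^{(n)}=n!\,m^{\otimes n}$, $\mathbf F^{Q}(\mathcal H,\nu)=\mathcal F^Q(\mathcal H)$ and $\mathbf J_1^-(h)=0$, then close a two-way induction on the order using the three-term recursion for $\la\omega,h\ra\la P_n(\omega),f^{(n)}\ra$---is exactly the intended one, and it does work.

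There is, however, one intermediate claim that is false as stated and which hides the only nontrivial step for $q\ne1$. You assert that the recursion sends $\la P_n(\omega),f_1\cd\dots\cd f_n\ra$ to terms ``each again of the form $\la P_m(\omega),g^{(m)}\ra$ with $g^{(m)}$ a $\cd$-product (or sum thereof) of functions from $C_0(X)$.'' For the annihilation term this is wrong when $q\ne1$: by \eqref{hyfd7urf},
\[
\mathbf J_2^-(h)(f_1\cd\dots\cd f_n)=\Sym_{n-1}\Bigl(\sum_{k=1}^n\Bigl[\int_X h(y)\,Q(y,x_1)\dotsm Q(y,x_{k-1})f_k(y)\,dy\Bigr]f_1(x_1)\dotsm f_{k-1}(x_{k-1})f_{k+1}(x_k)\dotsm f_n(x_{n-1})\Bigr),
\]
and the bracketed factor is a genuine function of $(x_1,\dots,x_{k-1})$, not a constant, so the result is not a sum of $\cd$-products. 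Since $\mathscr{CP}$ and $\mathscr{OCP}$ are defined through coefficient functions lying in $C_0(X^m)$, your induction closes only if this expression is $\Sym_{n-1}$ of a \emph{continuous} compactly supported function. That is true: expanding $\prod_j Q(y,x_j)$ over the possible order patterns, each region $\{y<x_j\ (j\in S),\ x_j<y\ (j\notin S)\}$ agrees up to a Lebesgue-null set with a slab $\{\alpha<y^1<\beta\}$ whose endpoints depend continuously on the $x_j$'s, and integrating an $L^1$ function of $y$ over such a slab is continuous in $(x_1,\dots,x_{k-1})$. But this must be said, because it is precisely the step that fails when the diagonals carry $m_\nu^{(n)}$-mass---the paper's remark that $\mathscr{CP}\ne\mathscr{OCP}$ for $q\ne1$ and non-degenerate $\nu$ is ``due to the form of $\mathbf J_2^-$.'' Your appeal to the absence of measure-theoretic subtleties on the diagonals explains why that obstruction disappears here, but the off-diagonal continuity of the integrated $Q$-factors is the other half of the argument and is not mere bookkeeping. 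With that verification inserted, and the induction run with general coefficient functions in $C_0(X^m)$ rather than pure tensors (the correction operators $\mathbf J^0(h)$ and the continuous version of $\mathbf J_2^-(h)$ being continuous on $C_0(X^m)$, so that the passage from sums of product functions to arbitrary $f^{(n)}\in C_0(X^n)$ is legitimate), the proof is complete.
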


However, due to the form of the operator $\mathbf J^-_2(h)$, see \eqref{ufcxyf}, equality $\mathscr{CP}=\mathscr{OCP}$  fails if $q\ne1$ and the the measure $\nu$ is not concentrated at one point. Still, in the classical case, $q=1$, 
Theorem~\ref{urr8r}  implies Theorem~\ref{ur7o67r6}.

\subsection{Anyon Meixner-type white noise}\label{ufuktydr}

We will assume in this subsection that  \eqref{vggyufd7u}  holds.
We may, at least informally, define,
$$\omega(x)=\la\omega,\delta_x\ra,\quad x\in X,$$
 so that for $h\in C_0(X)$,
\begin{equation}\label{yf7r}
\la \omega,h\ra=\int_X dx\, \omega(x)h(x).\end{equation}
Hence, we may think of $(\omega(x))_{x\in X}$
as an anyon Meixner-type white noise.

For $x\in X$, we define an annihilation operator $\partial_x$ as the linear operator acting on
$\mathbf F^{Q}_{\mathrm{fin}}(B_0(X))$ by the formula:
\begin{equation}\label{kbhfdvyd}(\partial_x f^{(n)})(x_1,\dots,x_{n-1}):= n f^{(n)}(x,x_1,\dots,x_{n-1}),\quad
(x_1,\dots,x_{n-1})\in X^{n-1},
\end{equation}
for $f^{(n)}\in \mathbf B_0^{Q}(X^n)$. Then, by \eqref{ufcxyf}, for $h\in C_0(X)$, we may interpret the operator $\mathbf J_2^-(h)$ as the integral
\begin{equation}\label{adiohdsrtdv}  \mathbf J_2^-(h)=\int_X dx\, h(x)\partial_x\,. \end{equation}

Next, we introduce an `operator-valued distribution'
$X\ni x\mapsto\partial_x^\dag$ so that, for any $h\in C_0(X)$ and $f^{(n)}\in \mathbf B_0^{Q}(X^n)$,
\begin{equation}\label{hbioghogy}
\int_X dx\, h(x)\partial_x^\dag f^{(n)}:
=h \cd f^{(n)}.
\end{equation}
 In other words, we may think  $\partial_x^\dag f^{(n)}=\delta_x\cd f^{(n)}$. Thus,
\begin{equation}\label{ytdee67y}
 \mathbf J^+(h)=\int_X dx\, h(x)\partial_x^\dag\,.
\end{equation}

For $h\in C_0(X)$, we will now need operators
$$\int_X dx\, h(x) \partial_x^\dag \partial_x,\qquad \int_X dx\, h(x)\partial_x^\dag \partial_x\partial_x$$
acting on $\mathbf F^{Q}_{\mathrm{fin}}(B_0(X))$.
In view of \eqref{kbhfdvyd} and \eqref{hbioghogy}, we have, for each $f^{(n)}\in \mathbf B_0^{Q}(X^n)$,
\begin{align*}
 \bigg(\int_X dx\, h(x) \partial_x^\dag \partial_x\,f^{(n)}\bigg)(x_1,\dots,x_n)&=n\bigg(\int_X dx\, h(x)\di_x^\dag f^{(n)}(x,\cdot)\bigg)(x_1,\dots,x_n)\\
 &=n\,\Sym_n\big(h(x_1)f^{(n)}(x_1,x_2,\dots,x_n)\big)
 \end{align*}
(compare with \eqref{jfiu}), and
\begin{align}
 &\bigg(\int_X dx\, h(x) \partial_x^\dag \partial_x\di_x\,f^{(n)}\bigg)(x_1,\dots,x_{n-1})\notag\\
 &\quad =n(n-1)\bigg(\int_X dx\, h(x)\di_x^\dag f^{(n)}(x,x,\cdot)\bigg)(x_1,\dots,x_{n-1})\notag\\
 &\quad =n(n-1)\,\Sym_{n-1}\big(h(x_1)f^{(n)}(x_1,x_1,x_2,x_3,\dots,x_{n-1})\big).\label{igiugygg}
 \end{align}

\begin{theorem}\label{hfu8fr78} Assume \eqref{vggyufd7u} holds. Then for $h\in C_0(X)$,
\begin{align}
\mathbf J^0(h)&=\int_X dx\, h(x)\lambda \partial_x^\dag \partial_x,\label{vfyter}\\
\mathbf J^-_1(h)&=\int_X dx\, h(x)\eta \partial_x^\dag \partial_x\partial_x\,.\label{bkvgutfgi}
\end{align}
Thus,
\begin{equation}\label{ydfyd}\mathbf J(h)=\int_X dx\, h(x)(\partial_x^\dag+\lambda\partial_x^\dag \partial_x +\eta \partial_x^\dag \partial_x\partial_x+\partial_x).\end{equation}
\end{theorem}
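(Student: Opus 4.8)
The plan is to derive Theorem~\ref{hfu8fr78} as a direct computational consequence of Theorem~\ref{urr8r} combined with the already-established formula \eqref{yufu7edseaa} and the identities \eqref{jfiu}-type computation recorded in \eqref{igiugygg}. Since \eqref{vggyufd7u} holds, Theorem~\ref{urr8r} tells us that for $h,f_1,\dots,f_n\in C_0(X)$ the action of $\mathbf J(h)$ on $f_1\cd\dots\cd f_n$ is given by the four-term expression \eqref{yufu7edseaa}. Comparing with the Jacobi decomposition \eqref{iut78} and \eqref{yur75}, the first term is $\mathbf J^+(h)(f_1\cd\dots\cd f_n)$, the last term is $\mathbf J_2^-(h)(f_1\cd\dots\cd f_n)$ (this matches \eqref{ufcxyf}), and the two middle terms — the one with coefficient $\lambda$ and the one with coefficient $2\eta$ — must therefore be $\mathbf J^0(h)(f_1\cd\dots\cd f_n)$ and $\mathbf J_1^-(h)(f_1\cd\dots\cd f_n)$ respectively, by uniqueness of the decomposition into order-preserving, order-lowering-by-zero and order-lowering-by-one parts.

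First I would record that, by density and continuity, it suffices to verify \eqref{vfyter} and \eqref{bkvgutfgi} on vectors of the form $f_1\cd\dots\cd f_n$ with $f_i\in C_0(X)$, since such vectors are total in $\mathbf F^Q_{\mathrm{fin}}(B_0(X))$ and all operators involved act continuously on this space (in the topology of the topological direct sum). Then I would show that the $\lambda$-term in \eqref{yufu7edseaa},
$$\lambda\sum_{i=1}^n f_1\cd\dots\cd f_{i-1}\cd(hf_i)\cd f_{i+1}\cd\dots\cd f_n,$$
equals $\int_X dx\,h(x)\,\lambda\,\partial_x^\dag\partial_x\,(f_1\cd\dots\cd f_n)$. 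For this I use that $f_1\cd\dots\cd f_n=\Sym_n(f_1\otimes\dots\otimes f_n)$ and the computation preceding \eqref{igiugygg}, which gives $\int_X dx\,h(x)\partial_x^\dag\partial_x\,f^{(n)}=n\,\Sym_n(h(x_1)f^{(n)}(x_1,\dots,x_n))$; applying this to $f^{(n)}=f_1\otimes\dots\otimes f_n$ and expanding the symmetrization, the factor $n$ combined with averaging over the position of the "marked" coordinate reproduces exactly the sum over $i$ of the $Q$-symmetrizations of $f_1\otimes\dots\otimes(hf_i)\otimes\dots\otimes f_n$, i.e.\ the $\lambda$-term. An entirely analogous computation, now based on \eqref{igiugygg} itself, identifies the $2\eta$-term in \eqref{yufu7edseaa} with $\int_X dx\,h(x)\,\eta\,\partial_x^\dag\partial_x\partial_x\,(f_1\cd\dots\cd f_n)$: the prefactor $n(n-1)$ together with the double-marking $f^{(n)}(x_1,x_1,x_2,\dots)$ produces, after symmetrization, the sum over ordered pairs $1\le i<j\le n$ of $Q$-symmetrizations of $f_1\otimes\dots\otimes(hf_if_j)\otimes\dots\otimes f_n$ (with the coordinates $i,j$ merged), which is precisely $2\eta$ times that term. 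Finally \eqref{ydfyd} is obtained by summing \eqref{ytdee67y}, \eqref{vfyter}, \eqref{bkvgutfgi} and \eqref{adiohdsrtdv} according to \eqref{iut78} and \eqref{yur75}.

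The main obstacle I anticipate is the bookkeeping in matching the symmetrized tensor expressions: one must check that the $Q$-factors $Q_\pi$ appearing in $\Sym_n$ when a coordinate (or a merged pair of coordinates) is singled out combine correctly with the $Q$-factors implicit in the combinatorial sums on the right-hand side of \eqref{yufu7edseaa}, and in particular that on the merged diagonal $X^{(n-1)}_\theta$ the relevant partition $\theta$ has the block containing the merged index of size $2$, which is what makes the coefficients $a_1=2\eta$ and $b_1=2\lambda$ from \eqref{vggyufd7u} and the normalizing constants $c_2$ in $m_\nu^{(n)}$ consistent with the factors $\lambda$ and $2\eta$ in \eqref{yufu7edseaa}. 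This is the same diagonal/partition combinatorics that already underlies Theorem~\ref{fu7r7} and Theorem~\ref{urr8r}, so it should reduce to carefully invoking those results rather than redoing the analysis; nonetheless it is the step where sign/weight errors are most likely, so I would treat it with care, possibly first checking the case $n=2$ explicitly (using the displayed formula for $m_\nu^{(2)}$) before stating the general identity.
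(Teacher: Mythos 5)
Your proposal is correct and takes essentially the same route as the paper: the paper likewise reduces to vectors $g^{(n)}=f_1\cd\dotsm\cd f_n$, uses \eqref{igiugygg} to compute $\int_X dx\,h(x)\,\partial_x^\dag\partial_x\partial_x\,g^{(n)}$, and carries out the permutation bookkeeping (via an explicit map $\pi\mapsto\sigma_{ij}(\pi)$ and the identities \eqref{gygfy}, \eqref{gciugfi}) to show this equals $2\sum_{i<j}f_1\cd\dotsm\cd(hf_if_j)\cd\dotsm\cd f_n$, which is then matched against the $2\eta$-term of \eqref{yufu7edseaa}; the $\mathbf J^0$ case is declared "similar and simpler" and omitted, exactly as you treat it.
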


In view of formula \eqref{utr7r}, the operator $\mathbf J(h)$ is a realization of the operator $\la \omega,h\ra$ in the extended anyon Fock space $\mathbf F^{Q}(\mathcal H,\nu)$. So, with an abuse of notation, we may denote  $\mathbf J(h)$ by 
$\la \omega,h\ra$.
Then, by \eqref{yf7r} and \eqref{ydfyd}, we get the following representation of the anyon Meixner-type white noise (realized in the extended anyon Fock space $\mathbf F^{Q}(\mathcal H,\nu)$): 
$$ \omega(x)=\partial_x^\dag+\lambda\partial_x^\dag \partial_x +\eta \partial_x^\dag \partial_x\partial_x+\partial_x.$$

\begin{remark} We note that, for $q$-commutation relations with $q$ being real from either the interval $(-1,0)$ or the interval $(0,1)$ \cite{BS,BKS,A1}, there is no analog of a $q$-L\'evy process which would have a representation like in \eqref{ydfyd}. Nevertheless, as shown in \cite{BW}, there exist classical Markov processes whose transition probabilities are measures of orthogonality for $q$-Meixner (orthogonal) polynomials on the real line.
\end{remark}

\section{Proofs}\label{ur867}

\subsection{Proof of Proposition}\ref{yfd6rd6}

Note that $\Sym_1=\mathbf 1$, so we need to prove the statement for $n\ge2$. Following \cite{BLW}, let us first briefly recall how one shows that the operator $\Sym_n$ given by
\eqref{oitr8o} is an orthogonal projection in the space $L^2(X^n,m^{\otimes n})$.
For each $\pi\in \mathfrak S_n$, we define
\begin{equation}\label{hvtd7i5} (\Psi_\pi f^{(n)})(x_1,\dots,x_n)=Q_\pi(x_1,\dots,x_n)f^{(n)}(x_{\pi^{-1}(1)},\dots,x_{\pi^{-1}(n)})\end{equation} for $(x_1,\dots,x_n)\in X^{(n)}$. Thus,
$ \Sym_n=\frac1{n!}\sum_{\pi\in\mathfrak  S_n}\Psi_{\pi}$.
We then have
$\Psi_\pi^*=\Psi_{\pi^{-1}}$,
which implies $\Sym_n^*=\Sym_n$. Furthermore, for each permutation $\varkappa\in \mathfrak S_n$, we have
\begin{equation}\label{giyr86ort}
\Psi_\pi\Psi_\varkappa=\Psi_{\varkappa\pi}.
\end{equation}
Therefore, on $X^{(n)}$,
\begin{equation}\label{ft7e6ws} \Sym_n^2=\frac1{(n!)^2}\sum_{\pi,\varkappa\in \mathfrak S_n}\Psi_\pi\Psi_\varkappa =\frac1{(n!)^2}\sum_{\pi\in\mathfrak S_n}
\sum_{\varkappa\in \mathfrak S_n}\Psi_{\pi\varkappa}
=\frac1{n!}\sum_{\varkappa\in \mathfrak S_n}\Psi_{\varkappa}=\Sym_n.
\end{equation}
Thus, $\Sym_n$ is an orthogonal projection.
 Note that formula \eqref{giyr86ort} implies that, for $\varkappa,\pi\in\mathfrak S_n$,
\begin{multline}\label{jgi86rt} \left(
\prod_{\substack{1\le i<j\le n\\ \pi(i)>\pi(j)}}\!\!\! Q(x_i,x_j)
\right)
\left(
\prod_{\substack{1\le k<l\le n\\ \varkappa(k)>\varkappa(l)}}\!\!\!Q(x_{\pi^{-1}(k)},x_{\pi^{-1}(l)})
\right)\\
=
\prod_{\substack{1\le i<j\le n\\ (\varkappa\pi)(i)> (\varkappa\pi)(j)}} \!\!\!\!\!\!Q(x_i,x_j),\quad (x_1,\dots,x_n)\in X^{(n)}.
\end{multline}

Now let us  consider the linear, bounded operator $\Sym_n$ in $L^2(X^n,m_\nu^{(n)})$. We represent the operator $\Sym_n$ as
 \begin{equation}\label{gygfy}\Sym_n=\frac1{n!}\sum_{\pi\in \mathfrak S_n}\Psi_{\pi},\end{equation}
  with $ \Psi_\pi f^{(n)}$ being defined on the whole $X^n$ by the right hand side of  formula \eqref{hvtd7i5} in which the function $Q_\pi(x_1,\dots,x_n)$ on $X^n$ is 
  defined in subsec.~\ref{guyf7r}.

We fix a permutation $\pi\in \mathfrak S_n$
and a partition $\theta=\{\theta_1,\dots,\theta_l\}\in\Pi(n)$ satisfying \eqref{ft6e}, and let \eqref{adiov}, \eqref{huyfr7i5er} hold. Further, let $\varkappa\in\mathfrak  S_n$ and let $\zeta=\{\zeta_1,\dots,\zeta_l\}\in\Pi(n)$ be such that
\begin{equation}\label{iut8o6t8}\max\zeta_1<\max\zeta_2<\dots<\max\zeta_l,\end{equation}
and
$$\varkappa \beta_i=\zeta_{\widehat\varkappa(i)},\quad i=1,\dots,l,$$
where $\widehat\varkappa\in \mathfrak S_l$.

Then, for each function $f^{(n)}:X^n\to\mathbb C$ and $(x_1,\dots,x_n)\in X^n$,
we have
\begin{equation}\label{uyr5e67e} (\Psi_\pi  \Psi_\varkappa f^{(n)})(x_1,\dots,x_n)=\left(
\prod_{\substack{1\le i<j\le l\\ \widehat \pi(i)>\widehat \pi(j)}}\!\!\! Q(x_{\theta_i},x_{\theta_j})
\right)( \Psi_\varkappa f^{(n)})(x_{\pi^{-1}(1)},\dots,x_{\pi^{-1}(n)}).\end{equation}
Denote $y_i=x_{\pi^{-1}(i)}$, or equivalently $y_{\pi(i)}=x_i$ for $i=1,\dots,n$.  Thus, $y_{\pi(i)}=y_{\pi(j)}$ if and only if $i$ and $j$ belong to the same element of the partition $\theta$. Equivalently, $y_i=y_j$ if and only if $i$ and $j$ belong to the same element of the partition $\beta$. Therefore,
$$ (\Psi_\varkappa f^{(n)})(y_1,\dots,y_n)=\left(
\prod_{\substack{1\le u<v\le l\\ \widehat \varkappa(u)>\widehat \varkappa(v)}}\!\!\! Q(y_{\beta_u},y_{\beta_v})
\right)f^{(n)}(y_{\varkappa^{-1}(1)},\dots,y_{\varkappa^{-1}(n)}). $$
Hence,
\begin{align} &( \Psi_\varkappa f^{(n)})(x_{\pi^{-1}(1)},\dots,x_{\pi^{-1}(n)})\notag\\
&\quad =\left(
\prod_{\substack{1\le u<v\le l\\ \widehat \varkappa(u)>\widehat \varkappa(v)}}\!\!\! Q(x_{\pi^{-1}\beta_u},x_{\pi^{-1}\beta_v})
\right)f^{(n)}(x_{\pi^{-1}\varkappa^{-1}(1)},\dots,x_{\pi^{-1}\varkappa^{-1}(n)})\notag\\
&\quad = \left(
\prod_{\substack{1\le u<v\le l\\ \widehat \varkappa(u)>\widehat \varkappa(v)}}\!\!\! Q(x_{
\theta_{\widehat \pi^{-1}(u)}
},x_{
\theta_{\widehat \pi^{-1}(v)}
})
\right)f^{(n)}(x_{(\varkappa\pi)^{-1}(1)},\dots,x_{(\varkappa\pi)^{-1}(n)}),\label{gyu7r754}
\end{align}
where we used the observation that, for each $u=1,\dots,l$,
$$ \pi^{-1}\beta_u=\theta_{\widehat\pi^{-1}(u)}.$$
Using \eqref{jgi86rt}, we get
\begin{equation}\label{uiytr8o6}
\left(
\prod_{\substack{1\le i<j\le l\\ \widehat \pi(i)>\widehat \pi(j)}}\!\!\! Q(x_{\theta_i},x_{\theta_j})
\right)\left(
\prod_{\substack{1\le u<v\le l\\ \widehat \varkappa(u)>\widehat \varkappa(v)}}\!\!\! Q(x_{
\theta_{\widehat \pi^{-1}(u)}
},x_{
\theta_{\widehat \pi^{-1}(v)}
})
\right)= \left(
\prod_{\substack{1\le i<j\le l\\ \widehat{\varkappa\pi}(i)>\widehat {\varkappa\pi}(j)}}\!\!\! Q(x_{\theta_i},x_{\theta_j})
\right).
\end{equation} Here $\widehat{\varkappa\pi}$ is the permutation from $\mathfrak S_l$ induced by the permutation $\varkappa\pi\in \mathfrak S_n$ and the partition $\theta$. In \eqref{uiytr8o6}, we used the observation that
$\widehat{\varkappa\pi}=\widehat\varkappa\,\widehat\pi$.
Now, substituting \eqref{gyu7r754} into \eqref{uyr5e67e}
and using \eqref{uiytr8o6}, we conclude that
\begin{equation}\label{gciugfi} \Psi_\pi \Psi_\varkappa= \Psi_{\varkappa\pi},\end{equation}  and hence analogously to \eqref{ft7e6ws}, we get $\Sym_n^2=\Sym_n$.

Next, we note that  the measure $m^{(n)}_\nu$
remains invariant under the transformation
$$ X^n\ni (x_1,\dots,x_n)\mapsto (x_{\pi^{-1}(1)},\dots x_{\pi^{-1}(n)})\in X^n.$$ Furthermore, as easily seen, the equality
$$ \overline{Q_{\pi^{-1}}(x_{\pi^{-1}(1)},\dots,x_{\pi^{-1}(n)})}= Q_\pi(x_{1},\dots,x_{n})$$
holds for each  $(x_1,\dots,x_n)\in X^n$. Hence, for each $\pi\in \mathfrak S_n$, $ \Psi_\pi^*= \Psi_{\pi^{-1}}$, which implies $\Sym_n^*=\Sym_n$.

Thus, $\Sym_n$ is an orthogonal projection in $L^2(X^n,m^{(n)}_\nu)$. Analogously to \cite[Proposition 2.5]{BLW}, we easily conclude that the image of $\Sym_n$is indeed $\mathbf F^Q_n(\mathcal H,\nu)$. Thus, Proposition~\ref{yfd6rd6} is proven.

Recall the tensor product $\cd$ defined on 
 $\mathbf F_{\mathrm{fin}}^{Q}(B_0(X))$ by formula \eqref{vytfr7i6ed}. Using \eqref{gygfy} and \eqref{gciugfi}, it is easy to show that, for any $f^{(m)}\in B_0(X^m)$ and $g^{(n)}\in B_0(X^n)$, we have
\begin{align*}(\Sym_m\, f^{(m)})\cd (\Sym_n\, g^{(n)})&=\Sym_{m+n}( (\Sym_m \,f^{(m)})\otimes (\Sym_n\, g^{(n)}))\\
&=\Sym_{m+n}(f^{(m)}\otimes g^{(n)}).\end{align*}
Therefore, the tensor product  $\cd$ is associative on  $\mathbf F_{\mathrm{fin}}^{Q}(B_0(X))$.

\subsection{Proof of Theorem \ref{utu8}}
Recall the unitary operator $I:L^2(\tau)\to\mathcal F^Q(\mathcal G)$. Our next aim is to obtain an explicit form of the subspace $I(\mathscr{OP}_n)$ of $\mathcal F^Q(\mathcal G)$.

Denote by $\mathbb N_{0,\,\mathrm{fin}}^\infty$ the set of all infinite sequences $\alpha=(\alpha_0,\alpha_1,\alpha_2,\dots)\in\mathbb N_0^\infty$ such that only a finite number of $\alpha_j$'s are not equal to zero. Let $|\alpha|:=\alpha_0+\alpha_1+\alpha_2+\dotsm$. For each $\alpha\in \mathbb N_{0,\,\mathrm{fin}}^\infty$ with $|\alpha|\ge1$, we denote by $\mathcal F_\alpha$ the subspace of the Fock space $\mathcal F^Q(\mathcal G)$ which consists of all elements of the form
$$ \Sym_{|\alpha|}\big(f^{(|\alpha|)}(x_1,\dots,x_{|\alpha|})p_{0}(s_1)\dotsm p_{0}(s_{\alpha_0})p_{1}(s_{\alpha_0+1})
\dotsm p_{1}(s_{\alpha_0+\alpha_1})p_{2}(s_{\alpha_0+\alpha_1+1})\dotsm\big),$$
where $f^{(|\alpha|)}\in\mathcal H^{\otimes|\alpha|}$. For $\alpha\in \mathbb N_{0,\,\mathrm{fin}}^\infty$ with $|\alpha|=0$, we set $\mathcal F_\alpha:=\{c\Omega\mid c\in\mathbb C\}$.
The following proposition is proven in \cite[Section 7]{BLW}.
This result is a counterpart of the  Nualart--Schoutens decomposition of the $L^2$-space of a classical L\'evy process \cite{NS}, see also \cite{Schoutens}.

\begin{proposition}\label{ui8t868} We have
\begin{equation}\label{vytd}\mathcal F^Q(\mathcal G)=\bigoplus_{\alpha\in \mathbb N_{0,\,\mathrm{fin}}^\infty}\mathcal F_\alpha.\end{equation}
\end{proposition}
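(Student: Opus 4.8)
The plan is to establish the orthogonal decomposition \eqref{vytd} by exhibiting a concrete orthonormal-type basis of $\mathcal F^Q(\mathcal G)$ adapted to the tensor factorization $\mathcal G=\mathcal H\otimes L^2(\mathbb R,\nu)$. Since $(p_k)_{k=0}^\infty$ is the system of monic orthogonal polynomials for $\nu$ and $\mathscr P(\mathbb R)$ is dense in $L^2(\mathbb R,\nu)$, the normalized polynomials $(p_k/\sqrt{c_{k+1}})_{k\ge0}$ form an orthonormal basis of $L^2(\mathbb R,\nu)$. Hence, picking an orthonormal basis $(e_j)_{j}$ of $\mathcal H$, the family of simple tensors $e_{j_1}\otimes p_{k_1}(\cdot),\dots$ in the various $\mathcal G^{\otimes m}$, once projected by $\Sym_m$, spans $\mathcal F^Q(\mathcal G)$. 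The key point is that the $\mathcal H$-variables $x_1,\dots,x_m$ and the $\mathbb R$-variables $s_1,\dots,s_m$ play asymmetric roles under $\Sym_m$, because the function $Q_\pi$ depends only on the $x$-variables; this is exactly what makes the grading by the multi-index $\alpha$ (which records how many $s$-variables carry each $p_k$) well-defined.

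First I would make precise the claim that each $\mathcal F_\alpha$ is a closed subspace: for fixed $\alpha$ with $|\alpha|=m$, the map sending $f^{(m)}\in\mathcal H^{\otimes m}$ to $\Sym_m$ of the corresponding tensor with the prescribed polynomial pattern in the $s$-variables is bounded, and one computes its adjoint/inner product explicitly, reducing everything to $L^2(X^m,m^{\otimes m})$ inner products weighted by the normalization constants $c_{k}$ coming from \eqref{yufrur}. This simultaneously shows $\mathcal F_\alpha$ is closed and gives a formula for the induced norm. Second, I would show that the $\mathcal F_\alpha$ are pairwise orthogonal for distinct $\alpha$: given two patterns of polynomials in the $s$-variables with different multi-indices, when one expands $\Sym_m$ as $\frac1{m!}\sum_\pi \Psi_\pi$ and computes the inner product, the $s$-integrals factor out (since $Q_\pi$ does not see the $s$'s) and one obtains products of integrals $\int_{\mathbb R} p_k(s)p_{k'}(s)\,\nu(ds)$, which vanish unless the two patterns are permutations of one another; tracking which permutations survive and reorganizing, orthogonality of $\mathcal F_\alpha$ and $\mathcal F_{\alpha'}$ for $\alpha\ne\alpha'$ drops out.

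Third, and this is where the real content lies, I would prove that the $\mathcal F_\alpha$ span a dense subspace — equivalently, that $\bigoplus_\alpha \mathcal F_\alpha = \mathcal F^Q(\mathcal G)$. Since $\mathcal F_{\mathrm{fin}}^Q(\mathcal H\otimes\mathscr P(\mathbb R))$ is dense in $\mathcal F^Q(\mathcal G)$, it suffices to write an arbitrary $F^{(m)}(x_1,s_1,\dots,x_m,s_m)=\Sym_m[\sum f_{(i_1,\dots,i_m)}(x_1,\dots,x_m)s_1^{i_1}\dotsm s_m^{i_m}]$ as a finite sum of elements of the $\mathcal F_\alpha$'s. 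Using the fact that $(p_k)$ is a triangular basis of $\mathscr P(\mathbb R)$, one expands each monomial $s^i=\sum_{k\le i}\gamma_{i,k}p_k(s)$, so $F^{(m)}$ becomes a finite linear combination of $\Sym_m$ applied to tensors of the form $g(x_1,\dots,x_m)p_{k_1}(s_1)\dotsm p_{k_m}(s_m)$. Each such term is, after relabeling the $x$-variables to group equal $k$-values together (which one is allowed to do inside $\Sym_m$ at the cost of absorbing a $Q_\pi$ factor into $g$, a manipulation already justified by \eqref{gciugfi}), precisely an element of $\mathcal F_\alpha$ for the $\alpha$ counting the multiplicities of the $k_j$.

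The main obstacle I anticipate is the bookkeeping in the last step: the reordering of the $s$-polynomial factors to a canonical (non-decreasing) order is accompanied by a permutation acting on all $m$ variables, and one must check that conjugating $\Sym_m$ by such a permutation simply reshuffles $g$ while leaving the value in $\mathcal F_\alpha$ — this is a consequence of the cocycle identity $\Psi_\pi\Psi_\varkappa=\Psi_{\varkappa\pi}$ and the $\Psi_\pi$-invariance of $\Sym_m$, but making it airtight requires care with how $Q_\pi$ restricts to the diagonal strata $X^{(m)}_\theta$ when some $x_j$ coincide (which can only happen when the corresponding $p_{k_j}$ agree, by the structure of the $\mathcal F_\alpha$). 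Since the excerpt attributes this proposition to \cite[Section 7]{BLW}, I would cite that reference for the detailed combinatorics and present here only the structure of the argument: define the $\mathcal F_\alpha$, verify closedness and pairwise orthogonality by the $s$-integral factorization, and establish density by the triangular expansion of monomials in the $p_k$.
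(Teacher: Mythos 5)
The paper does not actually prove this proposition --- it is quoted verbatim from \cite[Section~7]{BLW} --- so there is no in-text argument to compare yours against; judged on its own, your reconstruction is correct and is essentially the argument one would extract from \cite{BLW}. Pairwise orthogonality does follow from the factorization of the $s$-integrals: for $F=f\otimes p_{k_1}\otimes\dots\otimes p_{k_m}$ and $G=g\otimes p_{l_1}\otimes\dots\otimes p_{l_m}$ one gets that $(F,\Psi_\pi G)_{\mathcal G^{\otimes m}}$ equals $\prod_i\int_{\R}p_{k_i}p_{l_{\pi(i)}}\,d\nu$ times an $x$-integral, which vanishes for every $\pi$ unless the multisets $\{k_i\}$ and $\{l_j\}$ coincide; and density follows from the density of $\mathcal F^Q_{\mathrm{fin}}(\h\otimes\mathscr P(\R))$, the triangular expansion of $s^i$ in the $p_k$, and the identity $\Sym_m=\Sym_m\Psi_\pi$. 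Two points should be tightened. First, for closedness of $\mathcal F_\alpha$ boundedness of $f\mapsto\Sym_m(f\otimes(\text{pattern}))$ is not enough; what your inner-product computation actually yields, and what you should state, is the exact norm identity $\|\Sym_m(f\otimes p_{k_1}\otimes\dots\otimes p_{k_m})\|^2_{\mathcal G^{\otimes m}}=\frac{|S_\alpha|}{m!}\,c_{k_1+1}\dotsm c_{k_m+1}\,\|P_\alpha f\|^2_{\mathcal H^{\otimes m}}$, where $S_\alpha\subset\mathfrak S_m$ is the stabilizer of the pattern $(k_1,\dots,k_m)$ and $P_\alpha=|S_\alpha|^{-1}\sum_{\pi\in S_\alpha}\Psi_\pi$ is an orthogonal projection of $\mathcal H^{\otimes m}$ (by $\Psi_\pi^*=\Psi_{\pi^{-1}}$ and $\Psi_\pi\Psi_\varkappa=\Psi_{\varkappa\pi}$); this exhibits $\mathcal F_\alpha$ as a constant multiple of an isometric image of the closed subspace $P_\alpha\mathcal H^{\otimes m}$, hence closed. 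Second, the obstacle you anticipate concerning the behaviour of $Q_\pi$ on the diagonal strata $X^{(m)}_\theta$ is vacuous here: $\mathcal F^Q(\mathcal G)$ is built over $(m\otimes\nu)^{\otimes m}$, and by \eqref{rtsew6uwy} the diagonals are null, so only the function $Q_\pi$ of \eqref{y7e57ie} on $X^{(m)}$ ever enters; the extended $Q_\pi$ and the strata $X_\theta^{(n)}$ are needed only for the measures $m_\nu^{(n)}$ and the extended Fock space, not for this proposition. (Finally, when $\operatorname{supp}\nu$ is finite, say with $N$ points, the statement survives trivially because every $\mathcal F_\alpha$ with $\alpha_k\ne0$ for some $k\ge N$ is $\{0\}$.)
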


For each $n\in\mathbb N_0$, we define
$$\mathbb F_{n}:=\bigoplus_{\substack{\alpha\in \mathbb N_{0,\,\mathrm{fin}}^\infty\\
\alpha_0+2\alpha_1+3\alpha_2+\dotsm=n}} \mathcal F_\alpha.$$
Note that, by \eqref{vytd}, $$\mathcal F^Q(\mathcal G)=\bigoplus_{n=0}^\infty \mathbb F_n.$$

\begin{proposition}\label{hjvfytdc}
For each $n\in\mathbb Z_+$,
$$I \mathscr{OP}_n=\mathbb F_n.$$
\end{proposition}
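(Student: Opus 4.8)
The plan is to show both inclusions $I\mathscr{OP}_n\subset\mathbb F_n$ and $\mathbb F_n\subset I\mathscr{OP}_n$, exploiting the fact that both $(\mathscr{OP}_n)_{n\ge0}$ and $(\mathbb F_n)_{n\ge0}$ are orthogonal decompositions of their respective $L^2$-spaces; hence it suffices to prove one inclusion for each $n$ (the other then follows by orthogonality and summing over $n$). I would begin by recalling that $\mathscr{OP}_n=\mathscr{MP}_n\ominus\mathscr{MP}_{n-1}$, where $\mathscr{MP}_n$ is the closure of the continuous polynomials $\mathscr{CP}_n$ of order $\le n$, and that $I$ is unitary with $Ip=p\Omega$. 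So the first task is to understand $I(\mathscr{CP}_n)$, i.e.\ to describe the vectors $\la\omega,f_1\ra\dotsm\la\omega,f_k\ra\,\Omega$ for $k\le n$ inside $\mathcal F^Q(\mathcal G)$.

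The key computation: each operator $\la\omega,f\ra=a^+(f\otimes 1)+a^0(f\otimes\id)+a^-(f\otimes 1)$ is a sum of a creation, a neutral, and an annihilation operator. Applying a product of $k$ such operators to $\Omega$ and expanding, one gets a sum of terms; I would track, for each term, the total ``weight'' defined as follows. Writing a basis vector of $\mathcal F_\alpha$ involves the polynomials $p_0,p_1,p_2,\dots$ with multiplicities $\alpha_0,\alpha_1,\dots$; the weight of $\mathcal F_\alpha$ is $\alpha_0+2\alpha_1+3\alpha_2+\dotsm$. The crucial observation is that $a^+(f\otimes 1)$ creates a particle with ``1-component'' equal to $1=p_0$, raising the weight by $1$; the neutral operator $a^0(f\otimes\id)$ multiplies the ``1-component'' of an existing particle by $\id$, which via the three-term recursion \eqref{hdtrss} (i.e.\ $sp_k=p_{k+1}+b_kp_k+a_kp_{k-1}$) replaces $p_k$ by a combination of $p_{k+1},p_k,p_{k-1}$, thus raising the weight by at most $1$; and the annihilation operator $a^-(f\otimes 1)$ pairs off a particle, its effect being (via \eqref{hyfd7urf} with $h=f\otimes 1$) an integration that lowers the number of particles by one but, because of the inner product $\int p_k(s)p_0(s)\,\nu(ds)=\delta_{k0}c_1$ hidden in it (only the $p_0$-slot survives the pairing against the constant $1$), lowers the weight by exactly $1$ when it acts and otherwise kills the term. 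The upshot is that each of the three pieces of $\la\omega,f\ra$ changes the weight by at most $+1$ and the creation piece by exactly $+1$; hence $\la\omega,f_1\ra\dotsm\la\omega,f_k\ra\,\Omega$ lies in $\bigoplus_{m\le k}\mathbb F_m$. This gives $I(\mathscr{CP}_n)\subset\bigoplus_{m\le n}\mathbb F_n$, hence $I(\mathscr{MP}_n)\subset\bigoplus_{m\le n}\mathbb F_m$ after closure, and therefore $I(\mathscr{OP}_n)=I(\mathscr{MP}_n\ominus\mathscr{MP}_{n-1})\subset\mathbb F_n$ (using that $I$ preserves orthogonality and that the $\mathbb F_m$ are mutually orthogonal).

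For the reverse inclusion I would argue that $\bigoplus_{m\le n}\mathbb F_m\subset I(\mathscr{MP}_n)$, which combined with the forward inclusion and the orthogonal decompositions forces $I\mathscr{OP}_n=\mathbb F_n$. To see this, I would show by induction on $n$ that every generating vector $\Sym_{|\alpha|}\big(f^{(|\alpha|)}\otimes p_0^{\otimes\alpha_0}\otimes p_1^{\otimes\alpha_1}\otimes\dotsm\big)$ of $\mathcal F_\alpha$ with weight $\le n$ is a limit of vectors $I p$ with $p\in\mathscr{CP}_n$: one produces the $p_0$-slots by repeated action of $a^+(f\otimes 1)$, and ``promotes'' a $p_k$-slot to a $p_{k+1}$-slot by applying the neutral operator $a^0(f\otimes\id)$ and subtracting off the lower-weight contributions $b_kp_k$ and $a_kp_{k-1}$ that appear via \eqref{hdtrss} — these lower-weight corrections are handled by the induction hypothesis. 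Smearing with continuous test functions and passing to the dense span gives the claim. The main obstacle, I expect, is the bookkeeping in this last step: carefully disentangling, in the $Q$-symmetrized Fock space, which slots the creation and neutral operators act on when several particles are present, and verifying that the combinatorial coefficients (the $c_k$, $a_k$, $b_k$) assemble correctly so that the subtraction of lower-order terms is exact rather than merely approximate. Since Proposition \ref{ui8t868} already furnishes the Nualart–Schoutens-type decomposition \eqref{vytd} and its proof in \cite{BLW} contains the relevant combinatorial apparatus, I would lean on that machinery rather than redo it from scratch.
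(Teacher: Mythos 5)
Your proposal is essentially the paper's own proof: both reduce the statement to the cumulative identity $I\mathscr{MP}_n=\mathbb M_n$ of \eqref{uf7yrd}; both establish the forward inclusion by the same weight count (each of $a^+(f\otimes 1)$, $a^0(f\otimes\id)$, $a^-(f\otimes 1)$ raises the weight $\alpha_0+2\alpha_1+3\alpha_2+\dotsm$ by at most one, which is exactly what Lemma~\ref{tye6ue} together with \eqref{ihoy9y6}--\eqref{vtydy7} encodes); and both dispose of the reverse inclusion by invoking the Nualart--Schoutens-type construction from \cite{BLW}. One logical point needs correcting, though. You assert that the forward inclusion $I\mathscr{MP}_n\subset\mathbb M_n$ already yields $I\mathscr{OP}_n\subset\mathbb F_n$ ``using that $I$ preserves orthogonality and that the $\mathbb F_m$ are mutually orthogonal.'' It does not: a vector of $I\mathscr{MP}_n$ that is orthogonal to $I\mathscr{MP}_{n-1}$ need not be orthogonal to all of $\mathbb M_{n-1}$ unless one already knows $\mathbb M_{n-1}\subset I\mathscr{MP}_{n-1}$, i.e.\ the reverse inclusion; in general $A\subset B$ and $A'\subset B'$ do not give $A\ominus A'\subset B\ominus B'$. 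Consequently the ``one inclusion suffices'' shortcut cannot be cashed in at the $\mathscr{MP}$-level, and you genuinely need both inclusions $I\mathscr{MP}_n=\mathbb M_n$, exactly as the paper proves them. Since you do go on to supply the reverse inclusion, your argument closes; just reorder it so that $I\mathscr{MP}_n=\mathbb M_n$ is established for all $n$ first, and only then are orthogonal differences taken.
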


\begin{proof}
It suffices to prove that, for each $n\in\mathbb N$,
\begin{equation}\label{uf7yrd}
I \mathscr{MP}_n=\bigoplus_{\substack{\alpha\in \mathbb N_{0,\,\mathrm{fin}}^\infty\\
\alpha_0+2\alpha_1+3\alpha_2+\dotsm\le n}} \mathcal F_\alpha=:\mathbb M_n.
\end{equation}

\begin{lemma}\label{tye6ue}
The space $\mathbb M_n$ consists of all  finite sums of elements of the form
\begin{equation}\label{iu87re65} \Sym_k\big(f^{(k)}(x_1,\dots,x_k)s_1^{i_1}s_2^{i_2}\dotsm s_k^{i_k}\big),\end{equation}
where $f^{(k)}\in\h^{\otimes k}$ and $i_1+i_2+\dots+i_k+k\le n$.
\end{lemma}

\begin{proof} For each $\pi\in\mathfrak S_k$, we define a unitary operator $\Psi_\pi$ on $(\mathcal H\otimes L^2(\mathbb R,\nu))^{\otimes k}$ by
$$(\Psi_\pi g^{(k)})(x_1,s_1,\dots,x_k,s_k)=Q_\pi(x_1,\dots,x_k)g^{(k)}(x_{\pi^{-1}(1)},s_{\pi^{-1}(1)},\dots,x_{\pi^{-1}(k)},s_{\pi^{-1}(k)}).$$
Here the function $Q_\pi$ is defined   by \eqref{y7e57ie}. Then, by \cite{BLW}, the operators $\Psi_\pi$ form a unitary representation of the symmetric group $\mathfrak S_k$, and for each $\pi\in \mathfrak S_k$ we have $\Sym_k=\Sym_k\Psi_\pi$. Hence, for any permutation $\pi\in \mathfrak S_k$,   $u^{(k)}\in\mathcal H^{\otimes k}$, and any polynomial $r^{(k)}(s_1,\dots,s_k)$ in the $s_1,\dots,s_k$ variables,
$$ \Sym_k\big(
f^{(k)}(x_1,\dots,x_k)r^{(k)}(s_1,\dots,s_k)
\big)=\Sym_k\big(
u^{(k)}(x_1,\dots,x_k) r^{(k)}(s_{\pi^{-1}(1)},\dots,s_{\pi^{-1}(k)})
\big),$$
where
$$ u^{(k)}(x_1,\dots,x_k)=Q_\pi(x_1,\dots,x_k)f^{(k)}(x_{\pi^{-1}(1)},\dots,x_{\pi^{-1}(k)}).$$
 In particular, $u^{(k)}\in\mathcal H^{\otimes k}$.

Noting the evident representations
$$ p_l(s)=\sum_{i=0}^l \alpha_{il}\, s^i,\quad s^l=\sum_{i=0}^l
\beta_{il}\,p_i(s),$$
we easily conclude the  lemma.
\end{proof}

We  now  finish the proof of \eqref{uf7yrd}.  Let $\mathcal F_{\mathrm{fin}}(\mathcal H\otimes\mathscr P(\mathbb R))$ be the linear subspace of the full Fock space over $\mathcal H\otimes L^2(\mathbb R,\nu)$ which consists of finite sums of $c\Omega$ ($c\in\mathbb C$) and elements of the form
\begin{equation}\label{fgufr} f^{(k)}(x_1,\dots,x_k)s_1^{i_1}s_2^{i_2}\dotsm s_k^{i_k}\end{equation}
with $f^{(k)}\in\h^{\otimes k}$, $i_1,i_2,\dots,i_k\in\mathbb Z_+$, $k\in\mathbb N$.  We set
\begin{equation}\label{iuagiuai} \Sym:=\mathbf 1\oplus \Sym_1\oplus \Sym_2\oplus \Sym_3\oplus\dotsm\,.\end{equation}
 This operator  projects $\mathcal F_{\mathrm{fin}}(\mathcal H\otimes\mathscr P(\mathbb R))$ onto $\mathcal F_{\mathrm{fin}}^Q(\h\otimes\mathscr P(\mathbb R))$.
We have,
for each $h\in C_0(X)$ and $F\in \mathcal F_{\mathrm{fin}}(\mathcal H\otimes\mathscr P(\mathbb R))$,
\begin{gather}
a^+(h\otimes 1) \Sym F= \Sym J^+(h\otimes 1)F,\quad a^-(h\otimes 1) \Sym F= \Sym J^-(h\otimes 1)F,\label{ihoy9y6}\\
a^0(h\otimes \id) \Sym F= \Sym J^0(h\otimes \id)F.\label{gfur7}
\end{gather}
Here, for each $F$ as in \eqref{fgufr},
\begin{align}
&(J^+(h\otimes 1)F)(x_1,s_1,\dots,x_{k+1},s_{k+1})=h(x_1)1(s_1)f^{(k)}(x_2,\dots,x_{k+1})s_2^{i_1}s_3^{i_2}\dotsm s_{k+1}^{i_k},\notag\\
&(J^0(h\otimes \id)F)(x_1,s_1,\dots,x_{k},s_{k})=(h(x_1)s_1+\dots+h(x_k)s_k)f^{(k)}(x_1,\dots,x_k)s_1^{i_1}s_2^{i_2}\dotsm s_k^{i_k},\notag\\
&(J^-(h\otimes 1)F)(x_1,s_1,\dots,x_{k-1},s_{k-1})=\sum_{j=1}^k \int_X dy\int_{\mathbb R}\nu(dt)\, h(y)
Q(y,x_1)\dotsm Q(y,x_{j-1})\notag\\
&\qquad\times f^{(k)}(x_1,\dots,x_{j-1},y,x_{j},\dots,x_{k-1})s_1^{i_1}\dotsm s_{j-1}^{i_{j-1}}t^{i_j}s_j^{i_{j+1}}\dotsm s_{k-1}^{i_{j_k}}.\label{vtydy7}
\end{align}
Hence, it follows by induction from Lemma~\ref{tye6ue} and \eqref{ihoy9y6}--\eqref{vtydy7} that
$$\la\omega, h_1\ra\dotsm\la\omega, h_n\ra\Omega\subset \mathbb M_n$$
for  any $h_1,\dots,h_n\in C_0(X)$, $n\in\mathbb N$. Since $\mathbb M_n$ is a closed subspace of $\mathcal F^Q(\mathcal G)$, we therefore get the inclusion $I\mathscr{MP}_n\subset \mathbb M_n$. On the other hand, it directly follows from the proof of \cite[Proposition 6.7]{BLW} that each element of $\mathbb M_n$ which has form \eqref{iu87re65} belongs to $I\mathscr{MP}_n$. Hence, we get the inverse inclusion $\mathbb M_n\subset I\mathscr{MP}_n$.
\end{proof}

Note that, for each $h\in C_0(X)$,
\begin{equation}\label{kjguyfr7u} a^0(h\otimes \id)=d\Gamma(M_{h\otimes \id})=d\Gamma(M_h\otimes M_{\id}),\end{equation}
where $M_h$ is the operator of multiplication by the function $h(x)$ in $\mathcal H$ and $M_{\id}$ is the (restriction to $\mathscr P(\mathbb R)$ of the) operator of multiplication by the monomial $\id(s)=s$ in $L^2(\R,\nu)$.
(Note that the operator $M_{\id}$ is unbounded in $L^2(\mathbb R,\nu)$ if the support of measure $\nu$ is unbounded, and the second quantization operator has domain $\mathcal F_{\mathrm{fin}}^Q(\h\otimes\mathscr P(\mathbb R))$.)
In view of the recursion  formula \eqref{hdtrss},
we get the representation
$$ M_{\id}=A^++A^0+A^-,$$
where $A^+$, $A^0$, and $A^-$ are the linear operators on $\mathscr P(\mathbb R)$ given by
\begin{equation}\label{nitg} A^+p_k:=p_{k+1},\quad A^0p_k=b_kp_k,\quad A^- p_k=a_k p_{k-1}.\end{equation}
By \eqref{kjguyfr7u} and \eqref{nitg},
\begin{equation}\label{uit8rt} a^0(h\otimes \id)= d\Gamma(M_h\otimes A^+)+d\Gamma(M_h\otimes A^0)+d\Gamma(M_h\otimes A^-).
\end{equation}

By \eqref{g7r75e} and \eqref{uit8rt}, we get, for each $h\in C_0(X)$,
\begin{equation}\label{igyur7} \la \omega,h\ra=\mathcal A^+(h)+\mathcal A^0(h) +\mathcal A^-(h),\end{equation}
where
\begin{align}
\mathcal A^+(h):&=a^+(h\otimes 1)+d\Gamma(M_h\otimes A^+),\notag\\
\mathcal A^0(h):&=d\Gamma(M_h\otimes A^0),\notag\\
\mathcal A^-(h):&=a^-(h\otimes 1)+d\Gamma(M_h\otimes A^-).\label{f75re75}
\end{align}

\begin{proposition}\label{giyuf7or6o}
For each $h\in C_0(X)$, we have $\mathcal A^+(h):\mathbb F_n\to\mathbb F_{n+1}$, $\mathcal A^0(h):\mathbb F_n\to \mathbb F_n$, $\mathcal A^-(h):\mathbb F_n\to \mathbb F_{n-1}$.
\end{proposition}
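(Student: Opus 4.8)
The plan is to verify the three grading statements on the dense set of \emph{non\/-symmetrized} generators of the spaces $\mathcal F_\alpha$, and then transport them through the symmetrization operator $\Sym$. For a generator $f^{(k)}(x_1,\dots,x_k)\,p_{m_1}(s_1)\dotsm p_{m_k}(s_k)$ with $f^{(k)}\in\mathcal H^{\otimes k}$, introduce the grading $w:=k+m_1+\dots+m_k$. Grouping the $s$\/-factors according to which polynomial $p_m$ occurs in them shows that the symmetrization of such a vector lies in $\mathcal F_\alpha$ with $\alpha_j=\#\{i:m_i=j\}$, so that $w=\sum_j(j+1)\alpha_j=\alpha_0+2\alpha_1+3\alpha_2+\dots$. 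Hence $\mathbb F_n$ is precisely the closed linear span of the symmetrizations of grade\/-$n$ generators, and (since all three operators leave the common domain $\mathcal F_{\mathrm{fin}}^Q(\mathcal H\otimes\mathscr P(\mathbb R))$ invariant) it suffices to check that $\mathcal A^+(h)$ raises $w$ by $1$, $\mathcal A^0(h)$ preserves $w$, and $\mathcal A^-(h)$ lowers $w$ by $1$ on these generators.

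First I would pass from $\Sym F$ to $F$ using the intertwining relations. By \eqref{ihoy9y6} one has $a^\pm(h\otimes 1)\Sym F=\Sym J^\pm(h\otimes 1)F$, and by the standard identity $d\Gamma(B)\Sym=\Sym\widehat{d\Gamma}(B)$ with $\widehat{d\Gamma}(B)F=\sum_\ell(\mathbf 1\otimes\dots\otimes B\otimes\dots\otimes\mathbf 1)F$ (a consequence of the definition of $d\Gamma$, using that $\widehat{d\Gamma}(M_h\otimes A^\sharp)$ commutes with the permutation operators $\Psi_\pi$), the operators $d\Gamma(M_h\otimes A^\sharp)$ act on $\Sym F$ by applying $A^\sharp$ to one $s$\/-factor at a time with weight $h(x_\ell)$ and re\/-symmetrizing; this is also the content of \eqref{gfur7}, \eqref{uit8rt} once $J^0(h\otimes\id)F=\sum_\ell h(x_\ell)s_\ell F$ is split through the three\/-term recursion \eqref{hdtrss}, equivalently through $M_{\id}=A^++A^0+A^-$ from \eqref{nitg}. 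The grading count on non\/-symmetrized vectors is then immediate: $J^+(h\otimes 1)$ appends a fresh variable pair carrying $p_0(s)=1$, so $k\mapsto k+1$ with $\sum m_i$ unchanged, giving $w\mapsto w+1$; $A^+$ replaces some $p_{m_\ell}$ by $p_{m_\ell+1}$ ($w\mapsto w+1$); $A^0$ multiplies by $b_{m_\ell}$ without changing any polynomial ($w\mapsto w$); $A^-$ replaces $p_{m_\ell}$ by $a_{m_\ell}p_{m_\ell-1}$, the term vanishing when $m_\ell=0$ ($w\mapsto w-1$); and $J^-(h\otimes 1)$, by \eqref{vtydy7}, removes one variable pair by integrating its $s$\/-argument against $\overline{1(s)}\,\nu(ds)$.

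Combining these with \eqref{f75re75}, $\mathcal A^+(h)=a^+(h\otimes 1)+d\Gamma(M_h\otimes A^+)$ raises $w$ by $1$, $\mathcal A^0(h)=d\Gamma(M_h\otimes A^0)$ preserves $w$, and $\mathcal A^-(h)=a^-(h\otimes 1)+d\Gamma(M_h\otimes A^-)$ lowers $w$ by $1$ on every generator; applying $\Sym$ gives $\mathcal A^+(h)\colon\mathbb F_n\to\mathbb F_{n+1}$, $\mathcal A^0(h)\colon\mathbb F_n\to\mathbb F_n$, and $\mathcal A^-(h)\colon\mathbb F_n\to\mathbb F_{n-1}$ on the dense core of $\mathbb F_n$, hence on all of $\mathbb F_n$ inside the domain.

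The one step requiring genuine care is the annihilation term: one must ensure that $a^-(h\otimes 1)$ drops the grading by \emph{exactly} $1$, rather than producing components in $\bigoplus_{m\le n-1}\mathbb F_m$. This is where the orthogonality of the polynomials $p_m$ ($m\ge1$) against $p_0=1$ in $L^2(\mathbb R,\nu)$ is used: it forces only the terms with $m_\ell=0$ in \eqref{vtydy7} to survive the $\nu$\/-integration, so that $J^-(h\otimes 1)$ deletes exactly one variable pair of polynomial degree $0$, i.e.\ $w\mapsto w-1$. The remaining bookkeeping with $\Sym$ and the scalar $Q_\pi$\/-weights (which are functions of the spatial variables only) does not affect the grading and is routine once recorded for $J^-$.
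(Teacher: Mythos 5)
Your argument is correct and follows essentially the same route as the paper: the intertwining identities you invoke are exactly \eqref{ihoy9y6} together with Lemma~\ref{lgti8t} (i.e.\ $d\Gamma(M_h\otimes A^\sharp)\Sym F=\Sym N(M_h\otimes A^\sharp)F$), after which the grading count $w=k+m_1+\dots+m_k=\alpha_0+2\alpha_1+3\alpha_2+\dotsm$ on non-symmetrized generators is what the paper leaves implicit in its closing sentence. Your explicit remark that the orthogonality of $p_m$ ($m\ge1$) to $p_0=1$ forces $a^-(h\otimes 1)$ to lower the grading by exactly one is the right point to single out, and nothing further is needed.
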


\begin{proof}

Let $\sharp=+,\,0,\,-$. For each $h\in C_0(X)$, we define an operator $N(M_h\otimes A^\sharp)$ on $\mathcal F_{\mathrm{fin}}(\mathcal H\otimes\mathscr P(\mathbb R))$ by setting
$N(M_h\otimes A^\sharp)\Omega:=0$ and for each $n\in\mathbb N$,
\begin{multline*}N(M_h\otimes A^\sharp)\restriction \big(\mathcal F_{\mathrm{fin}}(\mathcal H\otimes\mathscr P(\mathbb R))\cap \mathcal G^{\otimes n}\big)\\
:=
(M_h\otimes A^\sharp)\otimes \mathbf 1\otimes\dots\otimes \mathbf 1+
\mathbf 1\otimes (M_h\otimes A^\sharp)\otimes \mathbf 1\otimes\dots\otimes \mathbf 1+\dots+\mathbf 1\otimes\dots\otimes \mathbf 1\otimes
(M_h\otimes A^\sharp).
\end{multline*}

\begin{lemma}\label{lgti8t}
 Let $\sharp=+,\,0,\,-$. For any  $h\in C_0(X\mapsto\mathbb R)$
and $F\in \mathcal F_{\mathrm{fin}}(\mathcal H\otimes\mathscr P(\mathbb R))$, we have
$$d\Gamma(M_h\otimes A^\sharp) \Sym F= \Sym N(M_h\otimes A^\sharp)F.$$
\end{lemma}

\begin{proof} Fix any  $F\in \mathcal F_{\mathrm{fin}}(\mathcal H\otimes\mathscr P(\mathbb R))$  of the form
$$ F(x_1,s_1,\dots,x_n,s_n)=f^{(n)}(x_1,\dots,x_n)p_{i_1}(s_1)\dotsm p_{i_n}(s_n).$$ By \eqref{oitr8o},
\begin{align}
&(\Sym_nF)(x_1,s_1,\dots,x_k,s_k)\notag\\
&\quad
=\frac1{n!}\sum_{\pi\in \mathfrak S_n}Q_\pi(x_1,\dots,x_n)f^{(n)}(x_{\pi^{-1}(1)},\dots,x_{\pi^{-1}(n)})p_{i_1}
(s_{\pi(1)})\dotsm p_{i_n}(s_{\pi(n)}).\label{gyfu7e5}
\end{align}
Note that
\begin{equation}\label{vuyd7ik}
d\Gamma(M_h\otimes A^+)= \Sym N(M_h\otimes A^+).
\end{equation}
 By \eqref{gyfu7e5},
\begin{align}
&(N(M_h\otimes A^+)\Sym_nF)(x_1,s_1,\dots,x_n,s_n)\notag\\
&\quad
=\frac1{n!}\sum_{j=1}^n \sum_{\pi\in \mathfrak S_n}Q_\pi(x_1,\dots,x_n)
h(x_{\pi^{-1}(j)})
f^{(n)}(x_{\pi^{-1}(1)},\dots,x_{\pi^{-1}(n)})\notag\\
&\qquad\times p_{i_1}
(s_{\pi^{-1}(1)})\dotsm p_{i_j+1}(s_{\pi^{-1}(j)})\dotsm p_{i_n}(s_{\pi^{-1}(n)})\notag\\
&\quad
=\frac1{n!}\sum_{j=1}^n \sum_{\pi\in \mathfrak S_n}Q_\pi(x_1,\dots,x_n)g_j^{(n)}(x_{\pi^{-1}(1)},s_{\pi^{-1}(1)},\dots,x_{\pi^{-1}(n)},s_{\pi^{-1}(n)}).
\label{hgit8p}
\end{align}
Here, for $j=1,\dots,n$,
$$ g_j^{(n)}(x_1,s_1,\dots,x_n,s_n):=h(x_j)f^{(n)}(x_1,\dots,x_n)p_{i_1}(s_1)\dotsm p_{i_j+1}(s_j)\dotsm p_{i_n}(s_n).$$
Then, by \eqref{vuyd7ik} and \eqref{hgit8p},
\begin{align}
&(d\Gamma(M_h\otimes A^+)\Sym_nF)(x_1,s_1,\dots,x_n,s_n)\notag\\
&\quad=
\frac1{(n!)^2}\sum_{j=1}^n \sum_{\sigma\in \mathfrak S_n}\sum_{\pi\in \mathfrak S_n}
Q_\sigma(x_1,\dots,x_n)Q_\pi(x_{\sigma^{-1}(1)},\dots,x_{\sigma^{-1}(n)}) \notag\\
&\qquad \times
g^{(n)}_j(x_{\sigma^{-1}(\pi^{-1}(1))},s_{\sigma^{-1}(\pi^{-1}(1))},\dots x_{\sigma^{-1}(\pi^{-1}(n))},s_{\sigma^{-1}(\pi^{-1}(n))}).
\notag
\end{align}
Hence,
\begin{align*}d\Gamma(M_h\otimes A^+)\Sym_nF&= \sum_{j=1}^n \Sym_n^2g_j^{(n)}=
\sum_{j=1}^n \Sym_ng_j^{(n)}
\\
&=\Sym_n\left(\sum_{j=1}^n g_j^{(n)}\right)= \Sym_n N(M_h\otimes A^+)F.\end{align*}
The proof for $A^0$ and $A^-$ is analogous.
\end{proof}

Now, the proposition follows directly from the definition of the spaces $\mathbb F^{(n)}$, formula \eqref{ihoy9y6}, and Lemma \ref{lgti8t}.
\end{proof}

\begin{proposition}\label{tyfd6ure}
For any $h_1,\dots,h_n\in C_0(X)$, we have
$$ I \la P_n(\omega),h_1\otimes\dots \otimes h_n\ra=\mathcal A^+(h_1)\dotsm\mathcal A^+(h_n)\Omega.$$
\end{proposition}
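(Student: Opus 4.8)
The strategy is to transport everything through the unitary $I:L^2(\tau)\to\mathcal F^Q(\mathcal G)$ and to reduce the claim to a projection computation inside the Fock space. First, recall that by definition $\la P_n(\omega),h_1\otimes\dots\otimes h_n\ra$ is the orthogonal projection of the continuous monomial $\la\omega^{\otimes n},h_1\otimes\dots\otimes h_n\ra=\la\omega,h_1\ra\dotsm\la\omega,h_n\ra$ onto $\mathscr{OP}_n$. Since $I$ is unitary and $Ip=p\Omega$, applying $I$ turns this into the orthogonal projection of $\la\omega,h_1\ra\dotsm\la\omega,h_n\ra\,\Omega\in\mathcal F^Q(\mathcal G)$ onto $I\mathscr{OP}_n$, which by Proposition~\ref{hjvfytdc} equals $\mathbb F_n$. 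So I would rewrite the assertion as: the orthogonal projection of $\la\omega,h_1\ra\dotsm\la\omega,h_n\ra\,\Omega$ onto $\mathbb F_n$ equals $\mathcal A^+(h_1)\dotsm\mathcal A^+(h_n)\,\Omega$.

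Next I would insert the factorization \eqref{igyur7}, $\la\omega,h_i\ra=\mathcal A^+(h_i)+\mathcal A^0(h_i)+\mathcal A^-(h_i)$, into each factor and expand, obtaining
$$\la\omega,h_1\ra\dotsm\la\omega,h_n\ra\,\Omega=\sum_{(\sharp_1,\dots,\sharp_n)\in\{+,0,-\}^n}\mathcal A^{\sharp_1}(h_1)\dotsm\mathcal A^{\sharp_n}(h_n)\,\Omega.$$
Applying the operators to $\Omega\in\mathbb F_0$ from right to left and using Proposition~\ref{giyuf7or6o} (with the convention $\mathbb F_{-1}=\{0\}$), the summand indexed by $(\sharp_1,\dots,\sharp_n)$ is either $0$ or lies in $\mathbb F_k$, where $k$ is the number of $+$'s among the $\sharp_i$ minus the number of $-$'s. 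Since there are exactly $n$ indices, $k\le n$, with $k=n$ only when $\sharp_1=\dots=\sharp_n=+$. As the decomposition $\mathcal F^Q(\mathcal G)=\bigoplus_m\mathbb F_m$ is orthogonal, projecting onto $\mathbb F_n$ kills every summand except $\mathcal A^+(h_1)\dotsm\mathcal A^+(h_n)\,\Omega$, which already belongs to $\mathbb F_n$ and is therefore unchanged by the projection. This gives the claimed identity.

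All the substantial work is contained in results already proved --- Propositions~\ref{hjvfytdc} and \ref{giyuf7or6o}, the factorization \eqref{igyur7}, and the intertwining property \eqref{tyr75} of $I$ --- so the argument is essentially organizing these inputs. The only point needing a little attention is the grading bookkeeping: one must observe that no string of length $n$ other than the all-$+$ one can raise the degree to $n$, and that strings which would require applying an annihilation operator to a vacuum-level vector simply contribute $0$. I do not anticipate any genuine obstacle.
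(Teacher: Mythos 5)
Your argument is correct and is essentially the paper's own proof: both reduce the claim via the unitary $I$ and Proposition~\ref{hjvfytdc} to projecting $(\mathcal A^+(h_1)+\mathcal A^0(h_1)+\mathcal A^-(h_1))\dotsm(\mathcal A^+(h_n)+\mathcal A^0(h_n)+\mathcal A^-(h_n))\Omega$ onto $\mathbb F_n$, and then use the grading of Proposition~\ref{giyuf7or6o} to see that only the all-$+$ term survives. Your write-up merely makes the degree bookkeeping more explicit than the paper does.
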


\begin{proof} Recall that $\la P_n(\omega),h_1\otimes\dots \otimes h_n\ra$ is the orthogonal projection of the monomial
$$\la h_1,\omega\ra\dotsm\la h_n,\omega\ra=\la h_1\otimes\dots\otimes h_n,\omega^{\otimes n}\ra$$ onto
$\mathscr {OP}_n$. The statement follows from Propositions \ref{hjvfytdc} and \ref{giyuf7or6o} if we note that $$I \la P_n(\omega),h_1\otimes\dots \otimes h_n\ra$$
is equal to the orthogonal projection of
$$\la \omega,h_1\ra\dotsm\la \omega,h_n\ra\Omega=
(\mathcal A^+(h_1)+\mathcal A^0(h_1)+\mathcal A^-(h_1))\dotsm
(\mathcal A^+(h_n)+\mathcal A^0(h_n)+\mathcal A^-(h_n))\Omega$$
onto $\mathbb F_n$.
\end{proof}

We will now explicitly calculate the vector $I \la P_n(\omega),h_1\otimes\dots \otimes h_n\ra$. We introduce a topology on  $B_0(X^n)$ which yields the following notion of convergence: $f_n\to f$ as $n\to\infty$ means that there exists a set $\Delta\in\mathcal B_0(X)$ such that
$\operatorname{supp}(f_n)\subset\Delta$ for all $n\in\mathbb N$ and
\eqref{ilyufre75iei7} holds.
Note that $C_0(X^n)$ is a topological subspace
of $B_0(X^n)$. 

For each $\theta=\{\theta_1,\dots,\theta_l\}\in\Pi(n)$ with $\theta_1,\dots,\theta_l$ satisfying 
\eqref{ft6e}, we define, for $f^{(n)}\in B_0(X^n)$, $(x_1,\dots,x_l)\in X^{(l)}$, and $(s_1,\dots,s_l)\in\R^l$,
\begin{equation}\label{isgdiy}(\mathcal E_\theta  f^{(n)})(x_1,s_1,\dots,x_l,s_l):=f^{(n)}_\theta
(x_1,\dots,x_l) p_{|\theta_1|-1}(s_1)p_{|\theta_2|-1}(s_2)\dotsm p_{|\theta_l|-1}(s_l).\end{equation}
Here   the function $f^{(n)}_\theta
(x_1,\dots,x_l)$ is obtained from the function $f^{(n)}(y_1,\dots,y_n)$ by replacing $y_{i_1}$ with $x_1$ for all $i_1\in\theta_1$, $y_{i_2}$ with $x_2$ for all $i_2\in\theta_2$,  and so on. Note that the function $f^{(n)}_\theta: X^{(l)}\to\mathbb C$ is completely identified by the restriction of the function $f^{(n)}:X^n\to\mathbb C$ to the set $X_\theta^{(n)}$.

For example, let $n=6$ and let $\theta=\{\theta_1,\theta_2,\theta_3\}\in\Pi(6)$ be of the form
$$\theta_1=\{1,3\},\quad\theta_2=\{2,4,6\},\quad\theta_3=\{5\}.$$
Then, for each $(x_1,x_2,x_3)\in X^{(3)}$ and $(s_1,s_2,s_3)\in\R^3$,
$$(\mathcal E_\theta f^{(6)})(x_1,s_1,x_2,s_2,x_3,s_3)
=f^{(6)}(x_1,x_2,x_1,x_2,x_3,x_2)p_1(s_1)p_2(s_2)p_0(s_3).$$

\begin{proposition}\label{iyr867r} For each $n\in\mathbb N$, the mapping
$$C_0(X)^n\ni(h_1,\dots,h_n)\mapsto \la P_n(\omega),h_1\otimes\dots \otimes h_n\ra\in L^2(\tau)$$
may be extended by linearity and continuity to a mapping
$$ B_0(X^n)\ni f^{(n)}\to \la P_n(\omega),f^{(n)}\ra\in L^2(\tau).$$ Furthermore,
for each $f^{(n)}\in B_0(X^n)$, we have
\begin{equation}\label{ouifr7ored} I\la P_n(\omega),f^{(n)}\ra=\Sym\left(\sum_{\theta\in \Pi(n)}
\mathcal E_\theta f^{(n)}\right).\end{equation}
\end{proposition}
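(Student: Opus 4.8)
The plan is to prove Proposition~\ref{iyr867r} by combining Proposition~\ref{tyfd6ure} with an explicit computation of the iterated action of the raising operators $\mathcal A^+(h_i)$ on the vacuum, and then to extend the resulting identity by continuity from elementary tensors to all of $B_0(X^n)$.

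First I would fix $h_1,\dots,h_n\in C_0(X)$ and compute $\mathcal A^+(h_1)\dotsm\mathcal A^+(h_n)\Omega$ directly. Recall from \eqref{f75re75} that $\mathcal A^+(h)=a^+(h\otimes 1)+d\Gamma(M_h\otimes A^+)$, where $a^+(h\otimes1)$ adds a new variable carrying the polynomial $p_0$, and $d\Gamma(M_h\otimes A^+)$ picks an already-present variable, multiplies its spatial part by $h$, and raises its polynomial index $p_k\mapsto p_{k+1}$. Expanding the product $\prod_{i=n}^1\bigl(a^+(h_i\otimes1)+d\Gamma(M_{h_i}\otimes A^+)\bigr)\Omega$ term by term, each term corresponds to a way of distributing the indices $\{1,\dots,n\}$ into groups: the ``fresh creation'' choices determine which indices open a new variable, and the $d\Gamma$ choices attach an index $i$ to a previously-opened variable. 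This bookkeeping is exactly a partition $\theta=\{\theta_1,\dots,\theta_l\}\in\Pi(n)$ of $\{1,\dots,n\}$: the block $\theta_u$ collects all indices whose $h_i$ got multiplied into the $u$-th variable, and if $|\theta_u|=k$ then that variable ends up carrying the polynomial $p_{k-1}$ (one $a^+$ plus $k-1$ raisings via $A^+$). Tracking the order in which blocks are opened and the $\Sym$'s introduced by each $a^+(h\otimes1)$, one sees that the contribution of the partition $\theta$ is precisely $\Sym_l\bigl(\mathcal E_\theta(h_1\otimes\dots\otimes h_n)\bigr)$ with the convention \eqref{ft6e} fixing the enumeration of the blocks by their maxima; summing over all $\theta\in\Pi(n)$ and applying the total symmetrization $\Sym$ of \eqref{iuagiuai} gives \eqref{ouifr7ored} for $f^{(n)}=h_1\otimes\dots\otimes h_n$. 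I expect the careful matching of the combinatorics — in particular, verifying that the $Q$-factors produced by the repeated $a^+$'s and by $\Sym$ reassemble into the single outer $\Sym$ acting on $\sum_\theta \mathcal E_\theta f^{(n)}$, with no stray constants — to be the main obstacle.

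Next I would address the extension. For fixed $\theta\in\Pi(n)$, the map $f^{(n)}\mapsto\mathcal E_\theta f^{(n)}$ is manifestly linear in $f^{(n)}$ and continuous for the topology on $B_0(X^n)$ described just before the proposition (uniform convergence with a common compact support): indeed $\mathcal E_\theta f^{(n)}$ is obtained by restricting $f^{(n)}$ to $X_\theta^{(n)}$, relabelling variables, and multiplying by fixed polynomials $p_{|\theta_u|-1}(s_u)$, and the embedding of the resulting function into $\mathcal F^Q(\mathcal G)$ is bounded because $\nu$ has all moments (it satisfies \eqref{ft7er7i57}) and the relevant block sizes are bounded by $n$. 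Hence the right-hand side of \eqref{ouifr7ored} defines a continuous linear map $B_0(X^n)\to\mathcal F^Q(\mathcal G)$. Since $C_0(X^n)$ is a topological subspace of $B_0(X^n)$ and finite sums of elementary tensors $h_1\otimes\dots\otimes h_n$ with $h_i\in C_0(X)$ are dense in $C_0(X^n)$ in this topology, and $I$ is a fixed unitary, the identity $I\la P_n(\omega),f^{(n)}\ra=\Sym\bigl(\sum_\theta\mathcal E_\theta f^{(n)}\bigr)$ established on elementary tensors extends first to all of $C_0(X^n)$ and then, using the continuity of the right-hand side, to all of $B_0(X^n)$; this simultaneously defines $\la P_n(\omega),f^{(n)}\ra:=I^{-1}\Sym\bigl(\sum_\theta\mathcal E_\theta f^{(n)}\bigr)$ for $f^{(n)}\in B_0(X^n)$, consistently with the earlier definition on $C_0(X^n)$. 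This completes the proof of Proposition~\ref{iyr867r}.

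One technical point worth isolating as a short lemma during the computation: the operators $\mathcal A^0(h)$ and $\mathcal A^-(h)$ do not contribute to $I\la P_n(\omega),h_1\otimes\dots\otimes h_n\ra$, because by Proposition~\ref{giyuf7or6o} they map $\mathbb F_k$ into $\mathbb F_k$ and $\mathbb F_{k-1}$ respectively, so any term in the expansion of $\la\omega,h_1\ra\dotsm\la\omega,h_n\ra\Omega$ containing at least one $\mathcal A^0$ or $\mathcal A^-$ lands in $\bigoplus_{k<n}\mathbb F_k$ and is killed by the projection onto $\mathbb F_n=I\mathscr{OP}_n$ (Proposition~\ref{hjvfytdc}); this is precisely the content of Proposition~\ref{tyfd6ure}, which I would simply invoke. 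Thus only the pure $\mathcal A^+$ string survives, and the combinatorial analysis above applies to it verbatim.
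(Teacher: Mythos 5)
Your proposal follows essentially the same route as the paper: reduce via Proposition~\ref{tyfd6ure} to the pure $\mathcal A^+$ string acting on the vacuum, identify the terms in the expansion with partitions $\theta\in\Pi(n)$ (each block of size $k$ carrying $p_{k-1}$), and then extend by linearity and continuity, defining the left-hand side on $B_0(X^n)$ by the right-hand side. The one step you flag as ``the main obstacle'' --- reassembling the $Q$-factors and interleaved symmetrizations into a single outer $\Sym$ --- is exactly where the paper invokes the already-established intertwining relations $a^+(h\otimes 1)\,\Sym=\Sym\,J^+(h\otimes 1)$ and $d\Gamma(M_h\otimes A^+)\,\Sym=\Sym\,N(M_h\otimes A^+)$ (formula \eqref{ihoy9y6} and Lemma~\ref{lgti8t}); these let you push all symmetrizations to a single outermost $\Sym$ and run your partition induction with the elementary unsymmetrized operators, for which the two cases are simply \eqref{fdtrs5} (open a new singleton block) and \eqref{iufy7} (attach $i$ to an existing block). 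Citing those relations closes the gap you identified; the rest of your argument, including the continuity discussion, matches the paper's.
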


\begin{proof}
Fix any $h_1,\dots,h_n\in C_0(X)$ and set $f^{(n)}(x_1,\dots,x_n)=h_1(x_1)\dotsm h_n(x_n)$. Then, by Proposition~\ref{tyfd6ure}, formula \eqref{ouifr7ored} is equivalent to
\begin{equation}
\label{ufdr6s}
\big(a^+(h_1\otimes 1)+d\Gamma(M_{h_1}\otimes A^+)\big)\dotsm
\big(a^+(h_n\otimes 1)+d\Gamma(M_{h_n}\otimes A^+)\big)\Omega
  =\Sym\left(\sum_{\theta\in \Pi(n)}
\mathcal E_\theta f^{(n)}\right).
\end{equation}
By \eqref{ihoy9y6} and Lemmma~\ref{lgti8t}, formula \eqref{ufdr6s} would follow from
\begin{equation}\label{igf7r}
\big(J^+(h_1\otimes 1)+N(M_{h_1}\otimes A^+)\big)
\dotsm \big(J^+(h_n\otimes 1)+N(M_{h_n}\otimes A^+)\big)\Omega=\sum_{\theta\in \Pi(n)}
\mathcal E_\theta f^{(n)}.
\end{equation}

Let $\beta=\{\beta_1,\dots,\beta_k\}$ be an (unordered) partition of $\{i+1,i+2,\dots,n\}$. Then
\begin{equation}\label{fdtrs5} J^+(h_i\otimes 1)\mathcal E_\beta(h_{i+1}\otimes h_{i+2}\otimes\dots\otimes  h_n)=\mathcal E_{\beta^+}(h_i\otimes h_{i+1}\otimes \dots\otimes h_n),\end{equation}
where $\beta^+:=\{\{i\},\beta_1,\dots,\beta_k\}$ is a partition of $\{i,i+1,\dots,n\}$. Furthermore,
\begin{equation}\label{iufy7}N(M_{h_i}\otimes A^+)\mathcal E_\beta(h_{i+1}\otimes h_{i+2}\otimes\dots\otimes  h_n)=\sum_{j=1}^k \mathcal E_{\beta_j^0}(h_i\otimes h_{i+1}\otimes \dots\otimes h_n),\end{equation}
where $\beta_j^0$ is  the partition of $\{i,i+1,\dots,n\}$
obtained from $\beta$ by adding $i$ to the set $\beta_j$, i.e.,
$$ \beta_j^0:=\{\beta_1,\dots,\beta_j\cup \{i\},\dots \beta_k\}.$$
By \eqref{fdtrs5} and \eqref{iufy7}, formula \eqref{igf7r}  follows by induction.  Finally, the extension
of formula \eqref{ouifr7ored} to the case of a general
$f^{(n)}\in B_0(X^n)$ follows by linearity and approximation.
\end{proof}

We will now prove Theorem~\ref{utu8}. Even, a bit more generally, we will prove that formula \eqref{gilyr7e5if} holds for any $f^{(n)},g^{(n)}\in B_0(X^n)$.

We first note that it suffices to prove formula \eqref{gilyr7e5if} in the case where $f^{(n)}=g^{(n)}=h_1\otimes\dots\otimes h_n$ with $h_1,\dots,h_n\in B_0(X)$.
By Proposition \ref{iyr867r},
\begin{align}
&\big(\la P_n(\omega),f^{(n)}\ra , \,\la P_n(\omega), f^{(n)}\ra\big)_{L^2(\tau)}\notag\\
&\quad =\left(\sum_{\theta\in\Pi(n)}\Sym_{|\theta|}
(\mathcal E_{\theta}f^{(n)}),\sum_{\zeta\in\Pi(n)}\Sym_{|\zeta|}
(\mathcal E_{\zeta}f^{(n)})\right)_{\mathcal F^Q(\mathcal G)}\notag\\
&\quad = \sum_{l=1}^n\sum_{\substack{\theta,\zeta\in\Pi(n)\\ |\theta|=|\zeta|=l}}\big(
\Sym_l(\mathcal E_\theta f^{(n)}),\mathcal E_\zeta f^{(n)})
\big)_{L^2((X\times \mathbb R)^l,(m\otimes \nu)^{\otimes l})}\,l!\,.\label{ouit8ot}
\end{align}
Note that, by Proposition \ref{yfd6rd6},
\begin{align}
(\Sym_n\,f^{(n)},\Sym_n\, f^{(n)})_{\mathbf F_{ n}^{Q}(\mathcal H,\nu)}&=
\int_{X^n}(\Sym_nf^{(n)})f^{(n)}\,dm^{(n)}_\nu\notag\\
&=\sum_{\zeta\in\Pi(n)} \int_{X_\zeta^{(n)}}(\Sym_n\,f^{(n)})f^{(n)}\,dm^{(n)}_{\nu,\, \zeta}\,.
\label{it89}\end{align}
By \eqref{ouit8ot} and \eqref{it89}, formula \eqref{gilyr7e5if} will follow if we show that, for a fixed $\zeta\in\Pi(n)$ with $|\zeta|=l$,
\begin{equation}\label{uit8p}
\sum_{\theta\in\Pi(n),\,|\theta|=l}
\big(
\Sym_l(\mathcal E_\theta f^{(n)}),\mathcal E_\zeta f^{(n)})
\big)_{L^2((X\times \mathbb R)^l,(m\otimes \nu)^{\otimes l})}\,l!= \int_{X_\zeta^{(n)}}(\Sym_n\,f^{(n)})f^{(n)}\,dm^{(n)}_{\nu,\, \zeta}\,.\end{equation}

So, let us fix a partition $\zeta=\{\zeta_1,\dots,\zeta_l\}\in\Pi(n)$ and assume that \eqref{iut8o6t8} holds.  Denote $k_i:=|\zeta_i|$, $i=1,\dots,l$.
We have, by the definition of $\mathcal E_\zeta f^{(n)}$:
\begin{equation}\label{rdu6e46i}(\mathcal E_\zeta f^{(n)})=\left(\prod_{i_1\in\zeta_1}
h_{i_1}\right)\otimes p_{k_1-1}\otimes\dots\otimes
\left(\prod_{i_l\in\zeta_l}
h_{i_l}\right)\otimes p_{k_l-1}.\end{equation}
 Let $\theta=\{\theta_1,\dots,\theta_l\}\in\Pi(n)$ and assume that \eqref{ft6e} holds. Let $r_i:=|\theta_i|$, $i=1,\dots,l$. We may assume that there exists a permutation $\widehat\pi\in\mathfrak S_l$ such that
 \begin{equation}\label{ydyduc}r_i=k_{\widehat\pi(i)},\quad i=1,\dots,l.\end{equation}
 Indeed, otherwise the corresponding term in the sum on the left hand side of formula \eqref{uit8p} vanishes.
Analogously to \eqref{rdu6e46i}, we have
\begin{align*}
&l!\, \Sym_l(\mathcal E_\theta f^{(n)})(y_1,s_1,\dots,y_l,s_l)=\sum_{\varkappa\in S_l}Q_\varkappa(y_1,\dots,y_l)\\
&\times
\left(\left(\prod_{j_1\in\theta_{\varkappa(1)}}h_{j_1}\right)\otimes p_{r_{\varkappa(1)}-1}\otimes\dots\otimes \left(\prod_{j_l\in\theta_{\varkappa(l)}}h_{j_l}\right)\otimes p_{r_{\varkappa(l)}-1}\right)(y_1,s_1,\dots,y_l,s_l).
\end{align*}
Hence, by \eqref{yufrur},
\begin{align}
&\big(
\Sym_l(\mathcal E_\theta f^{(n)}),\mathcal E_\zeta f^{(n)})
\big)_{L^2((X\times \mathbb R)^l,(m\otimes \nu)^{\otimes l})}\,l!\notag\\
&\quad =\sum_{\widehat\pi}\int_{X^l}Q_{\widehat\pi}(y_1,\dots,y_l)
\left(\prod_{j_1\in\theta_{\widehat\pi(1)}}h_{j_1}(y_1)\right)
\left(\prod_{i_1\in\zeta_1}
h_{i_1}(y_1)\right)\notag\\
&\qquad\times\dotsm\times
\left(\prod_{j_l\in\theta_{\widehat\pi(l)}}h_{j_l}(y_l)\right)
\left(\prod_{i_l\in\zeta_l}
h_{i_l}(y_l)\right)\,dy_1\dotsm  dy_l\, c_{k_1}\dotsm c_{k_l},\label{igr8678u}
\end{align}
where the summation is over all permutations $\widehat\pi\in S_l$ which satisfy \eqref{ydyduc}. Let us fix such a permutation $\widehat\pi$. Then, there exist
$$r_1! \dotsm r_l!=k_1!\dotsm k_l!$$ permutations $\pi\in \mathfrak S_n$ which satisfy
\begin{equation}\label{hit98t} \pi \zeta_i=\theta_{\widehat \pi(i)},\quad i=1,\dots,l.\end{equation}
Note that, for each permutation $\pi$  satisfying \eqref{hit98t} and for $(x_1,\dots,x_n)\in X_\zeta^{(n)}$,
\begin{align}&f^{(n)}(x_{\pi^{-1}(1)},\dots,x_{\pi^{-1}(n)})\notag\\
&\quad=(h_1\otimes \dots\otimes h_n)(x_{\pi^{-1}(1)},\dots,x_{\pi^{-1}(n)})\notag\\
&=(h_{\pi(1)}\otimes\dots\otimes h_{\pi(n)})(x_1,\dots,x_n)\notag\\
&\quad=\left(\prod_{j_1\in\pi\zeta_1}h_{j_1}\right)(y_1)\dotsm \left(\prod_{j_l\in\pi\zeta_l}h_{j_l}\right)(y_l)\notag\\
&\quad=\left(\prod_{j_1\in\theta_{\widehat\pi(1)}}h_{j_1}\right)(y_1)\dotsm \left(\prod_{j_l\in\theta_{\widehat\pi(l)}}h_{j_l}\right)(y_l),\label{fu88}
\end{align}
where $y_1=x_{i_1}$ for $i_1\in\zeta_1$,\dots, $y_l=x_{i_l}$ for $i_l\in\zeta_l$.

Let $\zeta, \theta \in \Pi(n)$ be such that condition \eqref{ydyduc} is satisfied by some permutation $\widehat{\pi} \in \mathfrak S_l$.
That is, the corresponding sequences $(k_1,\dots,k_l)$ and  $(r_1,\dots,r_l)$ coincide up to a permutation.
Denote
by $\mathfrak S_n[\zeta, \theta]$ the set of all permutations $\pi \in \mathfrak S_n$ which satisfy \eqref{hit98t} with some permutation $\widehat{\pi} \in \mathfrak S_l$.
(Note that the permutation $\widehat{\pi}$ is then completely identified by $\pi$, $\zeta$ and $\theta$ and automatically satisfies \eqref{hit98t}.)
Clearly, if $\theta$ and $\theta'$ are from $\Pi(n)$ with $|\theta| = |\theta'| = l$, both satisfying \eqref{hit98t}, and
 $\theta \ne \theta'$, then
 \begin{equation}\label{csd35}
\mathfrak S_n[\zeta, \theta] \cap \mathfrak S_n[\zeta,\theta'] = \varnothing.
\end{equation}
Furthermore,
\begin{align}\label{csd36}
\bigcup_{\substack{\theta \in\Pi(n),\, |\theta| =l \\ \theta\, \text{satisfying \eqref{hit98t}}}}
\mathfrak S_n[\zeta,\theta] = \mathfrak S_n.
\end{align}
Therefore,  by the definition of the measure $m_{c,\,\zeta}^{(n)}$ and formulas \eqref{uyr75rw}, \eqref{igr8678u}, \eqref{fu88}--\eqref{csd36},
\begin{align*}
&\big(\Sym_l(\mathcal E_\theta f^{(n)}),\mathcal E_\zeta f^{(n)}
\big)_{L^2((X\times \mathbb R)^l,(m\otimes\nu)^{\otimes l})}\,l!\\
& \quad = \frac{1}{n!}\sum_{\pi \in S_n[\zeta, \theta]}\int_{X_\zeta^{(n)}}\mathbf Q_\pi(x_1,\dots,x_n)f^{(n)}(x_{\pi^{-1}(1)}, \ldots , x_{\pi^{-1}(n)})\\
&\quad \quad \times f^{(n)}(x_1, \ldots, x_n)\,m_{\nu,\,\zeta}^{(n)}(dx_1\times\dots\times  dx_n).
\end{align*}Hence
\begin{align*}
&\sum_{\theta \in \Pi(n),\, |\theta| = l}\big(\Sym_l(\mathcal E_\theta f^{(n)}),\mathcal E_\zeta f^{(n)}
\big)_{L^2((X\times \mathbb R)^l,(m\otimes\nu)^{\otimes l})}\,l!\\
& = \sum_{\substack{\theta \in\Pi(n),\, |\theta| =l \\ \text{$\theta$ satisfying \eqref{hit98t}}}}
\big(\Sym_l(\mathcal E_\theta f^{(n)}),\mathcal E_\zeta f^{(n)}
\big)_{L^2((X\times \mathbb R)^l,(m\otimes\nu)^{\otimes l})}\,l!\\
& =\frac{1}{n!}\sum_{\substack{\theta \in\Pi(n),\, |\theta| =l \\ \text{$\theta$ satisfying \eqref{hit98t}}}}
\sum_{\pi \in S_n[\zeta, \theta]}\int_{X_\zeta^{(n)}} Q_\pi(x_1,\dots,x_n)f^{(n)}(x_{\pi^{-1}(1)}, \dots , x_{\pi^{-1}(n)})\\
& \quad \times f^{(n)}(x_1, \ldots, x_n)\,m_{\nu,\,\zeta}^{(n)}(dx_1\times\dots\times  dx_n)\\
& =\frac{1}{n!}\sum_{\pi \in S_n} \int_{X_\zeta^{(n)}} Q_\pi(x_1,\dots,x_n)f^{(n)}(x_{\pi^{-1}(1)}, \dots , x_{\pi^{-1}(n)})\\
&\quad\times
 f^{(n)}(x_1, \ldots, x_n)\,m_{\nu,\,\zeta}^{(n)}(dx_1\times\dots\times  dx_n)\\
&  = \int_{X_\zeta^{(n)}}(\Sym_n\, f^{(n)})f^{(n)}\,dm_{\nu,\,\zeta}^{(n)}\,.
\end{align*}
Thus,  Theorem~\ref{utu8} is proven.

\subsection{Proof of Theorem~\ref{fu7r7}}

Let us first prove the following 

\begin{lemma}\label{vfyt6}Let $h\in C_0(X)$ and $f^{n}\in B_0(X^n)$, $n\in\mathbb  N$. Then formulas \eqref{iut78}, \eqref{yur75} hold with
\begin{align*}
\mathbf J^+(h)\Sym_n f^{(n)}&=\Sym_{n+1}(h\otimes f^{(n)}),\\
\mathbf J^0(h)\Sym_n f^{(n)}&=\Sym_{n}\mathscr J^0(h)f^{(n)},\\
\mathbf J^-_1(h)\Sym_n f^{(n)}&=\Sym_{n-1}\mathscr J_1^-(h)f^{(n)},\\
\mathbf J^-_2(h)\Sym_n f^{(n)}&=\Sym_{n-1}\mathscr J_2^-(h)f^{(n)},
\end{align*}
Here \begin{multline}\label{tydr6ue}
(\mathscr J_2^-(h)f^{(n)})(x_1,\dots,x_{n-1})\\
:=\sum_{i=1}^n\int_X dy\,h(y) f^{(n)}(x_1,\dots,x_{i-1},y,x_i,\dots,x_{n-1})T_i(y,x_1,\dots,x_{n-1}),
\end{multline}
where for any $\theta\in\Pi(n-1)$
\begin{equation}\label{gftuoihcs}
T_i^{(n)}\restriction X\times  X_\theta^{(n-1)} :=\prod_{\theta_u\in\theta:\, \max\theta_u\le i-1}Q(y,x_{\theta_u}).
\end{equation}

\end{lemma}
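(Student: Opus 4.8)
The plan is to transport the decomposition $\langle\omega,h\rangle=\mathcal A^+(h)+\mathcal A^0(h)+\mathcal A^-(h)$ of \eqref{igyur7} through the unitary $\mathbf U=IU^{-1}$ and to exploit the explicit lifting formula $I\langle P_n(\omega),f^{(n)}\rangle=\Sym\big(\sum_{\theta\in\Pi(n)}\mathcal E_\theta f^{(n)}\big)$ of Proposition~\ref{iyr867r}. By Propositions~\ref{hjvfytdc} and \ref{giyuf7or6o} we have $I\mathscr{OP}_n=\mathbb F_n$ and $\mathcal A^+(h)\colon\mathbb F_n\to\mathbb F_{n+1}$, $\mathcal A^0(h)\colon\mathbb F_n\to\mathbb F_n$, $\mathcal A^-(h)\colon\mathbb F_n\to\mathbb F_{n-1}$; since the $\mathbb F_m$ are mutually orthogonal, $\mathbf J(h)=UI^{-1}\big(\mathcal A^+(h)+\mathcal A^0(h)+\mathcal A^-(h)\big)IU^{-1}$ is a sum of three operators on $\mathbf F^Q(\mathcal H,\nu)$ that, respectively, raise, preserve and lower the degree by one, hence they are exactly the components $\mathbf J^+(h),\mathbf J^0(h),\mathbf J^-(h)$ of \eqref{iut78}, and $\mathbf J^\sharp(h)\Sym_n f^{(n)}=UI^{-1}\mathcal A^\sharp(h)\,I\langle P_n(\omega),f^{(n)}\rangle$. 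Thus everything reduces to applying $\mathcal A^\sharp(h)$ to the lift $\Sym\big(\sum_\theta\mathcal E_\theta f^{(n)}\big)$ and re-recognising the result, again via Proposition~\ref{iyr867r}, as $I\langle P_m(\omega),g^{(m)}\rangle$ for the appropriate $g^{(m)}$, after which $U\langle P_m(\omega),g^{(m)}\rangle=\Sym_m g^{(m)}$; in passing, this also shows $\mathbf J(h)$ preserves $\mathbf F^Q_{\mathrm{fin}}(B_0(X))$, as every $\mathcal E_\theta f^{(n)}$ lies in $\mathcal F^Q_{\mathrm{fin}}(\mathcal H\otimes\mathscr P(\mathbb R))$.

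For the creation part this is immediate from Proposition~\ref{tyfd6ure}: for $f^{(n)}=h_1\otimes\dots\otimes h_n$ one has $\mathcal A^+(h)\,I\langle P_n(\omega),f^{(n)}\rangle=\mathcal A^+(h)\mathcal A^+(h_1)\dots\mathcal A^+(h_n)\Omega=I\langle P_{n+1}(\omega),h\otimes f^{(n)}\rangle$, and this extends to all $f^{(n)}\in B_0(X^n)$ by linearity and the continuity in Proposition~\ref{iyr867r}, giving $\mathbf J^+(h)\Sym_n f^{(n)}=\Sym_{n+1}(h\otimes f^{(n)})$. For the neutral part I would use $\mathcal A^0(h)=d\Gamma(M_h\otimes A^0)$ and Lemma~\ref{lgti8t} to replace it, on the lift, by $\Sym$ of $N(M_h\otimes A^0)\mathcal E_\theta f^{(n)}$; since $A^0p_k=b_kp_k$, acting on the $u$-th tensor factor of $\mathcal E_\theta f^{(n)}$ yields the scalar $b_{|\theta_u|-1}$ times multiplication by $h$ in the $u$-th spatial variable, and summing over $u$ one gets $N(M_h\otimes A^0)\mathcal E_\theta f^{(n)}=\mathcal E_\theta(\mathscr J^0(h)f^{(n)})$ once the identity $\sum_{i\in\theta_u}b_{|\theta_u|-1}/|\theta_u|=b_{|\theta_u|-1}$ encoded in \eqref{ho9t97tg} is used. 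Summing over $\theta\in\Pi(n)$ then gives $\mathbf J^0(h)\Sym_n f^{(n)}=\Sym_n\mathscr J^0(h)f^{(n)}$.

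The annihilation part is the core. Write $\mathcal A^-(h)=a^-(h\otimes 1)+d\Gamma(M_h\otimes A^-)$ and, via \eqref{ihoy9y6} and Lemma~\ref{lgti8t}, replace these on the lift by $\Sym$ of $J^-(h\otimes 1)\mathcal E_\theta f^{(n)}$ and of $N(M_h\otimes A^-)\mathcal E_\theta f^{(n)}$, respectively. In $J^-(h\otimes 1)$, see \eqref{vtydy7}, annihilating the $j$-th slot of $\mathcal E_\theta f^{(n)}$ produces the scalar $\int_{\mathbb R}p_{|\theta_j|-1}\,d\nu$, which vanishes unless $\theta_j$ is a singleton; deleting that singleton, integrating its spatial variable against $h$, keeping the commutation factors $Q(y,x_1)\dots Q(y,x_{j-1})$ — which, written through the blocks whose maximum precedes the deleted index, are exactly $T^{(n)}_i$ — and finally regrouping the double sum over $(\theta,\text{singleton block})$ as a sum over $(\theta'\in\Pi(n-1),\text{insertion point }i)$, one gets $\sum_\theta J^-(h\otimes 1)\mathcal E_\theta f^{(n)}=\sum_{\theta'\in\Pi(n-1)}\mathcal E_{\theta'}\big(\mathscr J_2^-(h)f^{(n)}\big)$. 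For $d\Gamma(M_h\otimes A^-)$ one uses $A^-p_{r-1}=a_{r-1}p_{r-2}$: acting on the $u$-th factor is nonzero only for blocks $\theta_u$ of size $r\ge 2$, it converts such a block into one of size $r-1$ in a partition $\theta'\in\Pi(n-1)$ while contributing $a_{r-1}$ and multiplication by $h$ in that spatial variable; regrouping over $\theta'$ and using that the $\binom r2=r(r-1)/2$ pairs $(i,j)$ inside a block of size $r$ all yield, after symmetrisation, the same diagonally contracted function, one recovers the coefficient $S^{(n)}_{j-1}=2a_{r-1}/((r-1)r)$ and hence $\sum_\theta N(M_h\otimes A^-)\mathcal E_\theta f^{(n)}=\sum_{\theta'\in\Pi(n-1)}\mathcal E_{\theta'}\big(\mathscr J_1^-(h)f^{(n)}\big)$. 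Adding the two contributions, applying $\Sym$ and then $UI^{-1}$ gives $\mathbf J^-(h)\Sym_n f^{(n)}=\Sym_{n-1}\mathscr J_1^-(h)f^{(n)}+\Sym_{n-1}\mathscr J_2^-(h)f^{(n)}$, which is \eqref{yur75}.

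The step I expect to be the main obstacle is precisely the combinatorial bookkeeping in the last paragraph: confirming that after summing over all $\theta\in\Pi(n)$ the surviving terms are indexed exactly by all $\theta'\in\Pi(n-1)$, with neither omission nor double counting, and that the multiplicities — the $\binom r2$ pairs inside a block for the $N(M_h\otimes A^-)$ term, and the reindexing of the $Q$-factors via block maxima for the $a^-(h\otimes 1)$ term — reproduce the normalisations $S^{(n)}_{j-1}$ and $T^{(n)}_i$ exactly. A convenient way to organise this is to prove the two displayed regrouping identities first for product functions $f^{(n)}=h_1\otimes\dots\otimes h_n$, where $\mathcal E_\theta$, $J^-(h\otimes 1)$, $N(M_h\otimes A^-)$ and the partition combinatorics become fully explicit, and then to pass to general $f^{(n)}\in B_0(X^n)$ by linearity and approximation in the topology of $B_0(X^n)$.
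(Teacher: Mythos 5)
Your proposal is correct and follows essentially the same route as the paper: conjugating the decomposition $\la\omega,h\ra=\mathcal A^+(h)+\mathcal A^0(h)+d\Gamma(M_h\otimes A^-)+a^-(h\otimes 1)$ through $\mathbf U$, using Proposition~\ref{iyr867r} and Lemma~\ref{lgti8t} to reduce to the action of $J^+(h\otimes 1)$, $N(M_h\otimes A^\sharp)$ and $J^-(h\otimes 1)$ on the lifts $\mathcal E_\theta f^{(n)}$, with the singleton-block vanishing $\int_\R p_k\,d\nu=0$ producing $\mathbf J_2^-$ and the $\binom{r}{2}$-pair count inside blocks producing the normalisation $S^{(n)}_{j-1}$ exactly as in the paper's argument. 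The combinatorial regrouping you flag as the main obstacle is precisely the content of parts (iii)--(iv) of the paper's proof, and your coefficient checks ($\binom{r}{2}\cdot 2a_{r-1}/((r-1)r)=a_{r-1}$, and the $Q$-factors matching $T_i^{(n)}$ via block maxima) are the correct ones.
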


\begin{proof}
  By \eqref{igyur7} and \eqref{f75re75},
we have
\begin{equation}
 \la \omega,h\ra=\mathcal A^+(h)+\mathcal A^0(h) +d\Gamma(M_h\otimes A^-)+a^-(h\otimes 1).\label{duvc}
\end{equation}

(i) ($\mathbf J^+(h)$ part)
From the proof of Proposition \ref{iyr867r}  it follows that
\begin{equation}\label{vfytde57e} 
\mathbf U^{-1}\mathcal A^+(h)\mathbf U\Sym_n f^{(n)}=\Sym_{n+1}(h\otimes f^{(n)})=\mathbf J^+(h)\Sym_n f^{(n)}.
\end{equation}

(ii) ({\it $\mathbf J^0(h)$ part})  By Lemma \ref{lgti8t}, Proposition \ref{iyr867r}, \eqref{iuagiuai},\eqref{f75re75}, \eqref{ir688u} and \eqref{ho9t97tg},
\begin{align}
\mathbf U^{-1}\mathcal A^0(h)\mathbf U\Sym_n f^{(n)}
& =\mathbf U^{-1}\mathcal A^0(h) \Sym \sum_{\theta\in\Pi(n)}\mathcal E_\theta f^{(n)}\notag\\
& =\mathbf U^{-1}\Sym N(M_h\otimes A^0)\sum_{\theta\in\Pi(n)}\mathcal E_\theta f^{(n)}\notag\\
&=  \mathbf U^{-1}\Sym \sum_{\theta\in\Pi(n)} \sum_{i=1}^n \mathcal E_\theta(h\times_i f^{(n)})b_{\gamma(i,\theta)-1}\, \gamma(i,\theta)^{-1}\notag\\
& =\Sym_n \mathscr J^0(h)f^{(n)}\notag\\
&=\mathbf J^0(h)\Sym_n f^{(n)}.\label{iwyeutf8}
\end{align}
Here,
$$ (h\times_i f^{(n)})(x_1,\dots,x_n):= h(x_i)f^{(n)}(x_1,\dots,x_n).$$

(iii) ({\it $\mathbf J^-_1(h)$ part})
Analogously,
\begin{align}
 &\mathbf U^{-1}d\Gamma(M_h\otimes A^-)\mathbf U\Sym_n f^{(n)}=
 \mathbf U^{-1}\Sym N(M_h\otimes A^-)\sum_{\theta\in\Pi(n)}\mathcal E_\theta f^{(n)}\notag\\
&\quad =\mathbf U^{-1} \Sym\sum_{l=1}^n\sum_{\substack{\theta\in\Pi(n)\\|\theta|=l}}\sum_{k=1}^l \mathbf 1^{\otimes (k-1)}\otimes(M_h\otimes A^-)\otimes \mathbf 1^{\otimes(l-k)}\mathcal E_\theta f^{(n)}\notag\\
&\quad=\mathbf U^{-1}\Sym\sum_{l=1}^{n-1}\sum_{\theta=\{\theta_1,\dots,\theta_l\}\in\Pi(n)}\sum_{\substack{k=1,\dots,l\\ |\theta_k|\ge2}}\mathbf 1^{\otimes (k-1)}\otimes(M_h\otimes A^-)\otimes \mathbf 1^{\otimes(l-k)}\mathcal E_\theta f^{(n)},\label{if7er5}
\end{align}
where \eqref{ft6e} is supposed to hold.
Note that, for $\theta=\{\theta_1,\dots,\theta_l\}\in\Pi(n)$ satisfying \eqref{ft6e} and $k\in\{1,\dots,l\}$ with $|\theta_k|\ge2$, we have
\begin{align}
&\big(\mathbf 1^{\otimes (k-1)}\otimes(M_h\otimes A^-)\otimes \mathbf 1^{\otimes(l-k)}\mathcal E_\theta f^{(n)}\big)(x_1,s_1,\dots,x_l,s_l)\notag\\
&\quad =a_{|\theta_k|-1}h(x_k)f^{(n)}_\theta(x_1,\dots,x_k,\dots,x_l)p_{|\theta_1|-1}(s_1)\dotsm p_{|\theta_{k-1}|-1}(s_{k-1})\notag\\
&\qquad\times p_{|\theta_k|-2}(s_k)
p_{|\theta_{k+1}|-1}(s_{k+1})
\dotsm p_{|\theta_{l}|-1}(s_{l}) .\label{iyur77rf}
\end{align}

Let us fix any $i,j\in\{1,\dots,n\}$ with $i<j$.  Consider the set
$$ L_i:=\{1,2,\dots,i-1,i+1,\dots,n\},$$
which has $n-1$ elements. Then any partition $\zeta=\{\zeta_1,\dots,\zeta_l\}\in\Pi(n-1)$ identifies a partition $\tilde\zeta=\{\tilde\zeta_1,\dots,\tilde\zeta_l\}$ of $L_i$: $\tilde\zeta_u:=K_i\zeta_u$, $u=1,\dots,l$, where
$$ K_i v:=\begin{cases} v,&\text{if }v\le i-1,\\
v+1,& \text{if }v\ge i.\end{cases}$$
 Let $\tilde \zeta_k$ be the element of $\tilde \zeta$ which contains $j$. Set
$$\theta_u:=\begin{cases}\tilde\zeta_u,&\text{if }u\ne k,\\
\tilde \zeta_k\cup\{i\},&\text{if }u=k.\end{cases}$$
Thus, we have constructed a partition $\theta=\{\theta_1,\dots,\theta_l\}\in\Pi(n)$ with $l\le n-1$.
Next, consider an arbitrary partition $\theta=\{\theta_1,\dots,\theta_l\}\in\Pi(n)$ with $l\le n-1$. Choose any $k\in\{1,\dots,l\}$ such that $|\theta_k|\ge2$. In how many ways can we obtain $\theta$ from $i,j$ and $\zeta\in\Pi(n-1)$ as above? This number is evidently equal to the number of all choices of
$i,j\in\{1,\dots,n\}$ with $i<j$ and $i,j\in \theta_k$, i.e.,
$$|\theta_k|(|\theta_k|-1)/2=(|\tilde\zeta_k|+1)|\tilde\zeta_k|/2=(|\zeta_k|+1)|\zeta_k|/2,
$$
where $j\in\tilde\zeta_k$, or equivalently $j-1\in\zeta_k$.
Hence, by \eqref{ft6e} \eqref{igftr7y}, \eqref{oduhuaijo}, \eqref{if7er5}, and \eqref{iyur77rf}, we get
\begin{equation}\label{jigtyl8tg}
\mathbf U^{-1}d\Gamma(M_h\otimes A^-)\mathbf U\Sym_n f^{(n)}
=\Sym_{n-1}\mathscr J_1^-(h)f^{(n)}=\mathbf J^-_1(h)\Sym_n f^{(n)}.
\end{equation}

(iv) ({\it $\mathbf J_2^-(h)$ part}).
For each $\theta=\{\theta_1,\dots,\theta_l\}\in\Pi(n)$ satisfying \eqref{ft6e}, we have
\begin{align}
&\big(a^-(h\otimes 1)\Sym_{l}\mathcal E_\theta f^{(n)}\big)(x_1,s_1,\dots,x_{l-1},s_{l-1})\notag\\
&\quad =\Sym_{l-1}\bigg(\int_Xdy\,
\sum_{\substack{i=1,\dots,l\\ |\theta_i|=1}}h(y)Q(y,x_1)Q(y,x_2)\dotsm Q(y,x_{i-1})\notag\\
&\qquad\times f^{(n)}_\theta(x_1,\dots,x_{i-1},y,x_i,\dots,x_{l-1})\notag\\
&\qquad \times p_{|\theta_1|-1}(s_1)\dotsm
p_{|\theta_{i-1}|-1|}(s_{i-1})p_{|\theta_{i+1}|-1}(s_i)\dotsm p_{|\theta_l|-1}(s_{l-1})\bigg),\label{agviyagv}
\end{align}
where we used \eqref{hyfd7urf} and \eqref{isgdiy}.
Hence, by \eqref{tydr6ue}, \eqref{gftuoihcs}, and \eqref{agviyagv},
\begin{equation}\label{kcgyiudgsc}
\mathbf U^{-1} a^-(h\otimes 1)\mathbf U \Sym_n  f^{(n)}=
\Sym_{n-1}\mathscr J_2^-(h)f^{(n)}=\mathbf J^-_2(h)\Sym_n  f^{(n)}.
\end{equation}
\end{proof}

\begin{lemma}\label{ugfcubc}
For any $h\in C_0(X)$ and $f^{(n)}\in \mathbf B_0^{Q}(X^n)$, we have
\begin{align} (\mathbf J_2^-(h)f^{(n)})(x_1,\dots,x_{n-1})&=(\mathscr  J_2^-(h)f^{(n)})(x_1,\dots,x_{n-1})\notag\\
&=n\int_X dy\, h(y)f^{(n)}(y,x_1,\dots,x_{n-1}).\label{trdtrs}\end{align}
\end{lemma}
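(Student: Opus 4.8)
The plan is to combine Lemma~\ref{vfyt6} with the extended $Q$-symmetry \eqref{ft7ier5ird} of $f^{(n)}$. Lemma~\ref{vfyt6} gives $\mathbf J_2^-(h)\Sym_n g=\Sym_{n-1}\mathscr J_2^-(h)g$ for every $g\in B_0(X^n)$, with $\mathscr J_2^-(h)$ the operator \eqref{tydr6ue}--\eqref{gftuoihcs}. Since $f^{(n)}\in\mathbf B_0^{Q}(X^n)$ means precisely $\Sym_n f^{(n)}=f^{(n)}$, it suffices to prove two things: \emph{(i)} $\mathscr J_2^-(h)f^{(n)}=n\int_X dy\,h(y)f^{(n)}(y,\cdot\,)$; and \emph{(ii)} the function $g^{(n-1)}(x_1,\dots,x_{n-1}):=\int_X dy\,h(y)f^{(n)}(y,x_1,\dots,x_{n-1})$ again belongs to $\mathbf B_0^{Q}(X^{n-1})$. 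Granting these, $\mathbf J_2^-(h)f^{(n)}=\Sym_{n-1}\mathscr J_2^-(h)f^{(n)}=\Sym_{n-1}(n g^{(n-1)})=n g^{(n-1)}=\mathscr J_2^-(h)f^{(n)}$, which is \eqref{trdtrs}.

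For \emph{(i)} I would argue on one piece $X_\theta^{(n-1)}$ at a time, $\theta=\{\theta_1,\dots,\theta_l\}\in\Pi(n-1)$ ordered as in \eqref{ft6e}, fixing $(x_1,\dots,x_{n-1})\in X_\theta^{(n-1)}$ and, after discarding a $y$-null set, taking $y$ distinct from $x_1,\dots,x_{n-1}$. For $i\in\{1,\dots,n\}$ put $P:=(x_1,\dots,x_{i-1},y,x_i,\dots,x_{n-1})$; its coincidence partition $\widetilde\theta$ is obtained from $\theta$ by shifting the positions $\ge i$ up by one and adjoining the singleton block $\{i\}$, which carries the value $y$. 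Applying \eqref{ft7ier5ird} to $f^{(n)}$ at $P$ with the cyclic permutation $\pi=(1\,2\,\cdots\,i)$, for which $(P_{\pi^{-1}(1)},\dots,P_{\pi^{-1}(n)})=(y,x_1,\dots,x_{n-1})$, gives $f^{(n)}(P)=Q_\pi(P)\,f^{(n)}(y,x_1,\dots,x_{n-1})$. The crux is to evaluate $Q_\pi(P)$ from \eqref{uyr75rw}: the permutation $\widehat\pi$ that $\pi$ induces on the blocks of $\widetilde\theta$, ordered by maxima, sends the index of $\{i\}$ to first position, raises by one the indices of exactly those blocks $\theta_u$ with $\max\theta_u\le i-1$, and fixes all other indices, so its only inversions pair the block $\{i\}$ with each such $\theta_u$. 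Hence $Q_\pi(P)=\prod_{\theta_u:\,\max\theta_u\le i-1}Q(x_{\theta_u},y)$, which by the Hermitian property of $Q$ and $|q|=1$ equals $\overline{T_i^{(n)}(y,x_1,\dots,x_{n-1})}=\big(T_i^{(n)}(y,x_1,\dots,x_{n-1})\big)^{-1}$ with $T_i^{(n)}$ as in \eqref{gftuoihcs}. Therefore $f^{(n)}(x_1,\dots,x_{i-1},y,x_i,\dots,x_{n-1})\,T_i^{(n)}(y,x_1,\dots,x_{n-1})=f^{(n)}(y,x_1,\dots,x_{n-1})$, so in \eqref{tydr6ue} each of the $n$ summands equals $\int_X dy\,h(y)f^{(n)}(y,x_1,\dots,x_{n-1})$, giving \emph{(i)}.

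For \emph{(ii)}, fix $\sigma\in\mathfrak S_{n-1}$ and extend it to $\widetilde\sigma\in\mathfrak S_n$ fixing the first coordinate, so that $f^{(n)}(y,x_{\sigma^{-1}(1)},\dots,x_{\sigma^{-1}(n-1)})=f^{(n)}(z_{\widetilde\sigma^{-1}(1)},\dots,z_{\widetilde\sigma^{-1}(n)})$ with $z:=(y,x_1,\dots,x_{n-1})$. For $y$ generic, the block of the coincidence partition of $z$ containing position $1$ is the singleton $\{1\}$, which has minimal maximum and is fixed by $\widetilde\sigma$; reading off \eqref{uyr75rw} then gives $Q_{\widetilde\sigma}(z)=Q_\sigma(x_1,\dots,x_{n-1})$, so \eqref{ft7ier5ird} for $f^{(n)}$ yields $f^{(n)}(z_{\widetilde\sigma^{-1}(1)},\dots,z_{\widetilde\sigma^{-1}(n)})=\overline{Q_\sigma(x_1,\dots,x_{n-1})}\,f^{(n)}(y,x_1,\dots,x_{n-1})$. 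Integrating against $h(y)\,dy$ shows $g^{(n-1)}$ satisfies \eqref{ft7ier5ird}, and since $g^{(n-1)}$ is plainly bounded with compact support it lies in $\mathbf B_0^{Q}(X^{n-1})$, completing the proof.

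The main obstacle is the block-combinatorial step inside \emph{(i)}: both the extended symmetrization factor $Q_\pi$ in \eqref{uyr75rw} and the weight $T_i^{(n)}$ in \eqref{gftuoihcs} are defined block-by-block relative to coincidence partitions, so one must track carefully how the cycle $\pi=(1\,2\,\cdots\,i)$ reorders the blocks of $\widetilde\theta$ by their maxima --- in particular that the block $\{i\}$ lands just ahead of precisely the blocks of $\theta$ contained in $\{1,\dots,i-1\}$ --- to see that $Q_\pi(P)$ carries exactly one $Q$-factor per such block and hence equals $\overline{T_i^{(n)}}$.
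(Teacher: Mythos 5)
Your proof is correct and follows essentially the same route as the paper: the paper likewise uses the $Q$-symmetry of $f^{(n)}$ under a cycle permutation (phrased via $\Psi_\pi f^{(n)}=f^{(n)}$ for the inverse cycle, with $Q_\pi=T_i^{(n)}$ on the relevant set) to turn each summand of \eqref{tydr6ue} into $\int_X dy\,h(y)f^{(n)}(y,x_1,\dots,x_{n-1})$, and then notes that the resulting function is $Q$-symmetric so that $\Sym_{n-1}$ acts as the identity, concluding via Lemma~\ref{vfyt6}. Your explicit block-by-block evaluation of $Q_\pi(P)=\overline{T_i^{(n)}(y,x_1,\dots,x_{n-1})}$ just fills in the combinatorial step the paper leaves terse.
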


\begin{proof} Fix any $n\ge 2$ and $i\in \{2,\dots,n\}$. Let a permutation $\pi\in \mathfrak S_n$ be given by $\pi(1)=i$,  $\pi(j)=j-1$ for  $j=2,\dots,i$, and $\pi(j)=j$ for $j=i+1,\dots,n$. Recall the operator $ \Psi_\pi$ defined in subsec.~\ref{yfd6rd6}. By  \eqref{uyr75rw} and \eqref{gftuoihcs}, we have, for each $(x_1,\dots,x_n)\in X^n$ such that $x_1\ne x_j$ for $j\in\{2,\dots,n\}$,
\begin{equation}\label{gufr8o}( \Psi_\pi f^{(n)})(x_1,\dots,x_n)=f^{(n)}(x_2,x_3,\dots,x_{i},x_1,x_{i+1},\dots,x_n)T_i(x_1,x_2,\dots,x_n).\end{equation}
Since $f\in \mathbf B_0^{Q}(X^n)$, by \eqref{gygfy} and
\eqref{gciugfi},
\begin{equation}\label{houygt9it}\Psi_\pi f^{(n)}=\Psi_\pi \Sym_n f^{(n)}=
\Sym_nf^{(n)}=f^{(n)}. \end{equation}
By \eqref{gufr8o} and \eqref{houygt9it}, for each $(x_1,\dots,x_{n-1})\in X^{n-1}$
\begin{align*}& \int_X dy\,h(y) f^{(n)}(x_1,\dots,x_{i-1},y,x_i,\dots,x_{n-1})T_i(y,x_1,\dots,x_{n-1})
\\&\quad =\int_{X\setminus\{x_1,\dots,x_{n-1}\}} dy\,h(y) f^{(n)}(x_1,\dots,x_{i-1},y,x_i,\dots,x_{n-1})T_i(y,x_1,\dots,x_{n-1})
\\
&\quad = \int_X dy\,h(y) f^{(n)}(y,x_1,\dots,x_{n-1}).\end{align*}
Hence, by \eqref{tydr6ue},
\begin{equation} (\mathscr  J_2^-(h)f^{(n)})(x_1,\dots,x_{n-1})=n\int_X dy\, h(y)f^{(n)}(y,x_1,\dots,x_{n-1})=:g^{(n-1)}(x_1,\dots,x_{n-1}).\label{fy7e5irfv}\end{equation}
Since $f^{(n)}\in \mathbf B_0^{Q}(X^n)$, formula \eqref{ft7ier5ird} holds for each $\pi\in\mathfrak S_n$. Hence, for each $\pi\in\mathfrak S_{n-1}$,
$$ g^{(n-1)}(x_1,\dots,x_{n-1})= Q_\pi(x_1,\dots,x_{n-1})g^{(n)}(x_{\pi^{-1}(1)},\dots,x_{\pi^{-1}(n)}),$$
see \eqref{uyr75rw}. Therefore,
\begin{equation}\label{guy8rt7o68r}
\Sym\, g^{(n-1)}=g^{(n-1)}.
\end{equation}
So, the lemma follows from \eqref{fy7e5irfv} and \eqref{guy8rt7o68r}.
\end{proof}

Now, Theorem~\ref{fu7r7} follows from Lemmas \ref{vfyt6}, \ref{ugfcubc}.

\subsection{Proof of Theorem \ref{urr8r}}

Assume that \eqref{vggyufd7u} holds.
Then, by \eqref{ho9t97tg} and \eqref{oduhuaijo}, we get
$R_i^{(n)}\equiv \lambda$ and  $S_{j-1}^{(n)}\equiv2\eta$. Hence, by \eqref{ir688u} and \eqref{igftr7y}, for any $h\in C_0(X)$, the operators $\mathscr J^0(h)$ and $\mathscr J_1^-(h)$ map $\mathcal F_{\mathrm{fin}}(C_0(X))$ into itself. Hence, condition (C) is satisfied. Furthermore, equality \eqref{yufu7edseaa} follows Theorem~\ref{fu7r7}.

To show that \eqref{vggyufd7u} is necessary for condition (C) to hold, we proceed as follows.
We first assume that the measure $\nu=\delta_\lambda$ for some $\lambda\in\mathbb R$ (Guassian/Poisson). Then $a_k=0$ for all $k\in\mathbb N$, $b_0=\lambda$, and the values of $b_k$ for $k\in\mathbb N$ maybe chosen arbitrarily. Thus, \eqref{vggyufd7u} holds in this case with $\eta=0$.

We next assume that the support of the measure $\nu$ contains an infinite number of points. Thus, $a_k>0$ for all $k\in\mathbb N$.

\begin{lemma}\label{hiwfgwy8} Let $q\ne-1$. Let $a_k>0$ for all $k\in\mathbb N$.
 Let $n\ge2$ and let $f^{(n)}\in C_0(X^n)$ be such that $\Sym_n\, f^{(n)}=0$ $m_\nu^{(n)}$-a.e.\ on the set $X^{(n)}_\theta$, where $\theta=\{\theta_1,\theta_2\}\in \Pi(n)$ with $\theta_1=\{1\}$ and $\theta_2=\{2,\dots,n\}$. Then $ f^{(n)}(x,\dots,x)=0$  for all $x\in X$.

  In the fermion case, $q=-1$, the above result remains true for $n\ge3$.
\end{lemma}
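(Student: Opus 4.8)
The plan is to compute $\Sym_n f^{(n)}$ explicitly on the stratum $X^{(n)}_\theta$ and then collapse the two distinct coordinates onto the full diagonal. First I would record the role of the hypothesis $a_k>0$: it gives $c_{n-1}=a_0a_1\dotsm a_{n-2}>0$, so by the construction of $m_{\nu,\,\theta}^{(n)}$ in subsec.~\ref{utf7r5svgy} (note $\max\theta_1=1<n=\max\theta_2$, so \eqref{ft6e} holds for this $\theta$) the measure on $X^{(n)}_\theta$ is $n\,c_{n-1}$ times the push-forward of $m^{\otimes2}$ under $(u,v)\mapsto(u,v,\dots,v)$. Hence the hypothesis is equivalent to $(\Sym_n f^{(n)})(u,v,\dots,v)=0$ for $m^{\otimes2}$-almost all $(u,v)$ with $u\ne v$.

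Next I would evaluate this quantity from \eqref{tr866}. For $\pi\in\mathfrak S_n$, the tuple $(x_{\pi^{-1}(1)},\dots,x_{\pi^{-1}(n)})$ evaluated at $(u,v,\dots,v)$ carries $u$ in position $\pi(1)$ and $v$ in all other positions; writing $g_k(u,v)$ for the value of $f^{(n)}$ with $u$ in its $k$-th argument and $v$ in all the rest, the $\pi$-term equals $Q_\pi(u,v,\dots,v)\,g_{\pi(1)}(u,v)$. The permutation $\widehat\pi\in\mathfrak S_2$ induced by $\pi$ on the two-block partition $\theta$ is the identity when $\pi(1)<n$ (then $n$ still maximizes the large block) and the transposition when $\pi(1)=n$; so by \eqref{uyr75rw} one has $Q_\pi=1$ in the first case and $Q_\pi=Q(u,v)$ in the second. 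Grouping the $(n-1)!$ permutations with a fixed value of $\pi(1)$ gives
\[
(\Sym_n f^{(n)})(u,v,\dots,v)=\frac1n\Big(\sum_{k=1}^{n-1}g_k(u,v)+Q(u,v)\,g_n(u,v)\Big).
\]

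Then I would upgrade the almost-everywhere vanishing to the diagonal. Up to an $m^{\otimes2}$-null set, $\{u<v\}$ coincides with the genuinely open set $\{u^1<v^1\}$ (the remaining strata $\{u^1=v^1,\,u^2<v^2\},\dots$ sit inside $\{u^1=v^1\}$, which is null), and on $\{u<v\}$ one has $Q(u,v)\equiv q$. Since each $g_k$ is continuous ($f^{(n)}\in C_0(X^n)$), the continuous function $\sum_{k=1}^{n-1}g_k+q\,g_n$ vanishes a.e.\ on $\{u^1<v^1\}$, hence on its closure $\{u^1\le v^1\}$, which contains the diagonal. Evaluating at $(x,x)$, where $g_k(x,x)=f^{(n)}(x,\dots,x)$ for every $k$, yields $(n-1+q)\,f^{(n)}(x,\dots,x)=0$; the symmetric argument over $\{v<u\}$ gives $(n-1+\bar q)\,f^{(n)}(x,\dots,x)=0$. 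Both coefficients vanish simultaneously only if $q=1-n$, which for $|q|=1$ forces $(n,q)=(2,-1)$. In every other case — i.e.\ for $q\ne-1$ with $n\ge2$, and for $q=-1$ with $n\ge3$ — we conclude $f^{(n)}(x,\dots,x)=0$.

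The step needing care is this last passage: $\Sym_n f^{(n)}$ is genuinely discontinuous across the hypersurface separating $\{u<v\}$ from $\{v<u\}$, so one cannot simply invoke continuity of $\Sym_n f^{(n)}$ itself; the remedy is to work on the open full-measure pieces $\{u^1<v^1\}$ and $\{v^1<u^1\}$, where $Q$ is constant, and to use continuity only of the functions $g_k$. I also expect the exceptional case $(n,q)=(2,-1)$ to be unavoidable — it is forced purely by $|q|=1$, and the rank-one function $f^{(2)}=\varphi\otimes\varphi$ shows the statement genuinely fails there — which is exactly why the fermion case requires $n\ge3$.
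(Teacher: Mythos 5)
Your argument is correct and is essentially the paper's own proof: you evaluate $\Sym_n f^{(n)}$ on the stratum $(u,v,\dots,v)$, observe that the induced permutation $\widehat\pi\in\mathfrak S_2$ makes $Q_\pi$ equal to $1$ unless $\pi(1)=n$, in which case it is $Q(u,v)\equiv q$ on the open set $\{u^1<v^1\}$, use continuity of $f^{(n)}$ to pass from a.e.\ vanishing to the closure containing the diagonal, and conclude $(n-1+q)f^{(n)}(x,\dots,x)=0$. Your extra bookkeeping (the role of $a_k>0$ via $c_{n-1}>0$, and the counterexample $f^{(2)}=\varphi\otimes\varphi$ for $(n,q)=(2,-1)$) is accurate but does not change the route.
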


\begin{proof} Let $x_1,x_2\in X$ be such that $x_1^1<x_2^1$.  (Recall that $x^i$ denotes the $i$-th coordinate of $x=(x^1,\dots,x^d)\in X$.) In particular, $x_1<x_2$. Then
 \begin{align}
&(\Sym_n\,f^{(n)})(x_1,x_2,x_2,\dots,x_2)=\frac1n\big( f^{(n)}(x_1,x_2,x_2,\dots,x_2)+
f^{(n)}(x_2,x_1,x_2,\dots,x_2)\notag\\
&\quad +\dots+f^{(n)}(x_2,\dots,x_2,x_1,x_2)+q f^{(n)}(x_2,\dots,x_2,x_1)\big)=0.\label{tye65e}
\end{align}
Since the function $f^{(n)}$ is continuous, equality \eqref{tye65e} holds point-wise on the open set
$$\{(x_1,x_2)\in X^2\mid x_1^1<x_2^1\}.$$
Therefore, for all $x\in X$, we get
$\frac{n-1+q}n f^{(n)}(x,\dots,x)=0$.
Thus,  $f^{(n)}(x,\dots,x)=0$ if either $q\ne -1$ and $n\ge2$, or $q=-1$ and $n\ge3$.
\end{proof}

We now set $\lambda:=b_0$. Let us show  that, if (C) holds, then $b_k=\lambda(k+1)$ for all $k\in\mathbb Z_+$. The proof below works for any anyon statisics, however, in the case where $q\ne-1$, this proof can be significantly simplified.

Let $\varepsilon\in\mathbb R$ be such that $b_1=2\lambda+\varepsilon$. We will now show by induction that
\begin{equation}\label{igyfd7i5ei}
b_k=\lambda(k+1)+\varepsilon,\quad k\ge1.\end{equation}
 Assume that equality in \eqref{igyfd7i5ei} holds for $k=1,\dots,n$. Fix any  $h\in C_0(X)$ and $f^{(n+2)}\in C_0(X^{n+2})$.
 We define a function $g^{(n+2)}\in C_0(X^{n+2})$ by
\begin{equation}\label{tye66ie7}
g^{(n+2)}(x_1,\dots,x_{n+2}):=f^{(n+2)}(x_1,\dots,x_{n+2})\big(\lambda h(x_1)
+h(x_2)(\lambda(n+1)+\varepsilon)\big).\end{equation}
 Let $\theta=\{\theta_1,\theta_2\}\in\Pi(n+2)$ with $\theta_1=\{1\}$, $\theta_2=\{2,\dots,n+2\}$. By \eqref{ir688u} and   \eqref{ho9t97tg}, we have $m_\nu^{(n+2)}$-a.e.\ on $X_\theta^{(n+2)}$:
\begin{align*}&(\mathscr J^0(h)f^{(n+2)})(x_1,\dots,x_{n+2})\\
&\quad =f^{(n+2)}(x_1,\dots,x_{n+2})\big(\lambda h(x_1)
+(n+1)h(x_2)(\lambda(n+1)+\varepsilon)/(n+1)\big)\\
&\quad =g^{(n+2)}(x_1,\dots,x_{n+2}).
 \end{align*}
Since (C) holds, there exists a function $u^{(n+2)}\in C_{0}(X^{n+2})$ such that
\begin{equation}\label{iwgyGF}
\Sym_{n+2}\mathscr J^0(h)f^{(n+2)}=\Sym_{n+2}\,u^{(n+2)}\end{equation} $m_\nu^{(n+2)}$-a.e.\ on $X^{n+2}$. Hence,
$$ \SSym_{n+2}(g^{(n+2)}-u^{(n+2)})(x_1,\dots,x_{n+2})=0 $$
for $m_c^{(n+2)}$-a.a.\ $(x_1,\dots,x_{n+2})\in X^{(n+2)}_\theta$.
Noting that $g^{(n+2)}-u^{(n+2)}\in C_0(X^{n+2})$, we conclude from Lemma~\ref{hiwfgwy8} that
\begin{equation}\label{ifu7i5e} u^{(n+2)}(x,\dots,x)=g^{(n+2)}(x,\dots,x),\quad x\in X.\end{equation}
By \eqref{tye66ie7}--\eqref{ifu7i5e},
\begin{equation}\label{ftur75reeawa}(\mathscr J^0(h)f^{(n+2)})(x,\dots,x)=\big(\lambda (n+2)+\varepsilon)
h(x)f^{(n+2)}(x,\dots,x)\end{equation}
for all $x\in X$.
By \eqref{ir688u},  \eqref{ho9t97tg}, and \eqref{ftur75reeawa}, we therefore get $b_{n+1}=\lambda(n+2)+\varepsilon$.
Thus, \eqref{igyfd7i5ei} is proven.

Our next aim is to show that $\varepsilon=0$. We  first derive the following analog of Lemma~\ref{hiwfgwy8}.

\begin{lemma}\label{uit8ot}Let $a_k>0$ for all $k\in\mathbb N$.
 Let $f^{(5)}\in C_0(X^5)$ be such that $\Sym_5\, f^{(5)}=0$ $m_\nu^{(5)}$-a.e.\ on the set $X^{(5)}_\theta$, where $\theta=\{\theta_1,\theta_2\}\in\Pi(5)$ with $\theta_1=\{1,2\}$, $\theta_2=\{3,4,5\}$.
Then $f^{(5)}(x,\dots,x)=0$ for all $x\in X$.
\end{lemma}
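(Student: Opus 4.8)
The plan is to follow the pattern of the proof of Lemma~\ref{hiwfgwy8}, with the two-block partition used there replaced by the present $\{2,3\}$-block partition $\theta=\{\theta_1,\theta_2\}$, $\theta_1=\{1,2\}$, $\theta_2=\{3,4,5\}$. First I would note that the assumption $a_k>0$ forces $c_2=a_1>0$ and $c_3=a_1a_2>0$, so the measure $m^{(5)}_{\nu,\theta}$ on $X^{(5)}_\theta$ is a strictly positive multiple of the push-forward of $m^{\otimes2}$ under the bijection $(y_1,y_2)\mapsto(y_1,y_1,y_2,y_2,y_2)$ from $X^{(2)}$ onto $X^{(5)}_\theta$. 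Consequently the hypothesis $\Sym_5\,f^{(5)}=0$ $m^{(5)}_\nu$-a.e.\ on $X^{(5)}_\theta$ is equivalent to $(\Sym_5\,f^{(5)})(y_1,y_1,y_2,y_2,y_2)=0$ for $m^{\otimes2}$-a.a.\ $(y_1,y_2)\in X^{(2)}$.

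Next I would evaluate $\Sym_5\,f^{(5)}$ at the point $\mathbf x:=(y_1,y_1,y_2,y_2,y_2)$ using \eqref{tr866}. Since $|\theta|=2$, for every $\pi\in\mathfrak S_5$ the induced permutation $\widehat\pi$ lies in $\mathfrak S_2$, and by \eqref{uyr75rw} the coefficient $Q_\pi(\mathbf x)$ equals $Q(y_1,y_2)$ when $\widehat\pi$ is the transposition and equals $1$ otherwise; from the definition of $\widehat\pi$ one checks that $\widehat\pi$ is the transposition precisely when $5\in\{\pi(1),\pi(2)\}$, i.e.\ when $\pi^{-1}(5)\in\{1,2\}$. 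Hence, writing $\mathbf x_\pi:=(x_{\pi^{-1}(1)},\dots,x_{\pi^{-1}(5)})$,
\[(\Sym_5\,f^{(5)})(\mathbf x)=\frac1{120}\Big(\sum_{\pi:\,\pi^{-1}(5)\in\{3,4,5\}}f^{(5)}(\mathbf x_\pi)+Q(y_1,y_2)\sum_{\pi:\,\pi^{-1}(5)\in\{1,2\}}f^{(5)}(\mathbf x_\pi)\Big).\]
Restricting to the open set $\{(y_1,y_2):y_1^1<y_2^1\}\subset X^{(2)}$, on which $Q(y_1,y_2)=q$ is constant, the right-hand side becomes a continuous function of $(y_1,y_2)$ that vanishes a.e., hence identically, on this set.

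Finally I would let $(y_1,y_2)\to(x,x)$ from inside $\{y_1^1<y_2^1\}$ (e.g.\ $y_1=(x^1-\varepsilon,x^2,\dots,x^d)$, $y_2=x$, $\varepsilon\downarrow0$): by continuity of $f^{(5)}$ on $X^5$ each $f^{(5)}(\mathbf x_\pi)$ tends to $f^{(5)}(x,\dots,x)$, while $\#\{\pi:\pi^{-1}(5)\in\{3,4,5\}\}=3\cdot4!=72$ and $\#\{\pi:\pi^{-1}(5)\in\{1,2\}\}=2\cdot4!=48$. This gives $(72+48q)\,f^{(5)}(x,\dots,x)=0$, and since $|q|=1$ we have $|72+48q|\ge72-48=24>0$, so $f^{(5)}(x,\dots,x)=0$ for all $x\in X$. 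The only step requiring real care is the identification of $Q_\pi(\mathbf x)$ through the induced permutation $\widehat\pi$ for the $\{2,3\}$-split; note that, unlike the $n=2$ case of Lemma~\ref{hiwfgwy8} where the analogous constant $\tfrac{1+q}{2}$ degenerates at $q=-1$, here $72+48q\neq0$ for all $q$ with $|q|=1$, so no separate fermionic case is needed.
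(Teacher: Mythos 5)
Your proof is correct and follows essentially the same route as the paper, which simply notes the argument is analogous to Lemma~\ref{hiwfgwy8} and records the resulting constant $\tfrac{6+4q}{10}$; your coefficient $\tfrac{72+48q}{120}$ is exactly this constant, and your identification of $\widehat\pi$ (transposition iff $\pi^{-1}(5)\in\{1,2\}$), the use of $a_k>0$ to make $m^{(5)}_{\nu,\theta}$ a positive multiple of the pushed-forward Lebesgue measure, and the limit along $\{y_1^1<y_2^1\}$ are precisely the details the paper leaves implicit.
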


\begin{proof}The proof is similar to that of Lemma~\ref{hiwfgwy8}. In fact, from the condition of Lemma~\ref{uit8ot}, we get
$ \frac{6+4q}{10}f^{(5)}(x,\dots,x)=0$,
which implies the statement.
\end{proof}

By \eqref{ir688u},    \eqref{ho9t97tg}, and \eqref{igyfd7i5ei}, we have, for $m_\nu^{(5)}$-a.e.\  $(x_1,\dots,x_5)\in X_\theta^{(5)}$ with $\theta\in\Pi(5)$ being as in Lemma~\ref{uit8ot},
\begin{equation}\label{vct6red6e} (\mathscr J^0(h)f^{(5)})(x_1,\dots,x_5)=
f^{(5)}(x_1,\dots,x_5)\big(h(x_1)(2\lambda+\varepsilon)+ h(x_3)(3\lambda+\varepsilon)\big).\end{equation}
Analogously to derivation of  formula \eqref{ftur75reeawa}, we conclude from condition (C),
Lemma~\ref{uit8ot}, and \eqref{vct6red6e} that, for all $x\in X$,
\begin{equation}\label{hur75r}
(\mathscr J^0(h)f^{(5)})(x,\dots,x)=f^{(5)}(x,\dots,x)
h(x)(5\lambda+2\varepsilon).
\end{equation}
On the other hand, by
\eqref{ir688u},    \eqref{ho9t97tg}, and \eqref{igyfd7i5ei},
we have, for all $x\in X$
\begin{equation}\label{it86tr}
(\mathscr J^0(h)f^{(5)})(x,\dots,x)=f^{(5)}(x,\dots,x)
h(x)(5\lambda+\varepsilon).
\end{equation}
Comparing \eqref{hur75r} and \eqref{it86tr}, we see that $\varepsilon$ must be equal to zero.

The proof of the equality $a_k=\eta k(k+1)$ for $k\in\mathbb N$ is similar, so we only outline it.
Denote $\eta:=a_1/2$.
Using Lemma~\ref{hiwfgwy8} and formulas \eqref{igftr7y}, \eqref{oduhuaijo}, we get the recursive formula
\begin{equation}\label{iyr8lotrg}
a_{n+1}=2\eta+\big((n+1)(n+2)-2\big)\frac{a_n}{n(n+1)}
\end{equation}
for $n\ge2$.
Choose $\varepsilon\in\mathbb R$ so that $a_2=6\eta+\varepsilon$. Then, by \eqref{iyr8lotrg},
\begin{equation}\label{oiwhcy97t}
a_3=12\eta+\frac{10}6\varepsilon,\quad
a_4=20\eta+\frac52\varepsilon,\quad a_5=30\eta+\frac72\varepsilon.
\end{equation}
On the other hand, by Lemma~\ref{uit8ot},
\begin{equation}\label{fur7}
a_5=a_2+2a_3.
\end{equation}
From  \eqref{oiwhcy97t} and \eqref{fur7}, we get $\varepsilon=0$. Hence, the recursive formula \eqref{iyr8lotrg} holds for all $n\ge1$. From here the desired equality follows.

We finally consider the case where
the support of the measure $\nu$ consists of $l$ points with $l\ge2$ being finite. In the case where $q=-1$, we will additionally assume that $l\ge3$.
  Then
 $a_1>0$, $a_2>0$,\dots,$a_{l-1}>0$, $a_i=0$ for$i\ge l$. Furthermore, by \eqref{yuft8uotfr8o},
 $ c_1>0$, $c_2>0$,\dots,$c_k>0$, $c_i=0$ for $i\ge l+1$.
 Let condition (C) be satisfied.
 Then, in view of the construction of the measures $m_\nu^{(n)}$, analogously to the above, we conclude that formula  \eqref{iyr8lotrg} holds for $n=1,2,\dots,l-1$. In particular, we get
 $$ a_l=a_1+\big( l(l+1)-2\big)\frac{a_{l-1}}{(l-1)l}\,.$$
Since $a_1>0$ and $a_{l-1}>0$, we therefore get $a_l>0$, which contradicts the fact that $a_l=0$. Thus, (C) can not be satisfied. Theorem \ref{urr8r} is proven.
 
 We leave the easy proof of Proposition~\ref{ytfr7} to the interested reader.  Let us show, however, how Theorem~\ref{ur7o67r6} can now be easily derived.

Assume $q=1$. Assume that $\mathscr{CP}=\mathscr{OCP}$. Then, for any
$h\in C_0(X)$ and $f^{(n)}\in C_0(X^n)$, we have
\begin{equation}\label{vytjdy6}\la \omega,h\ra\la P_n(\omega),f^{(n)}\ra\in \mathscr{OCP}\end{equation}
(we used that product of any polynomials from $\mathscr{CP}$ belongs to $\mathscr{CP}$). Since
$$\mathbf J_2^-(h)\la f^{(n)},P_n(\omega)\ra=\la \mathscr J^-_2(h)f^{(n)},P_{n-1}(\omega)\ra\in \mathscr{OCP},$$
we therefore conclude from Theorem \ref{fu7r7} and \eqref{vytjdy6} that (C) holds. Hence, by Theorem~\ref{urr8r}, \eqref{vggyufd7u} holds.

Let us now assume that \eqref{vggyufd7u} holds. Then, as follows from the proof of Theorem~\ref{urr8r}, $h\in C_0(X)$, the operators $\mathscr J^0(h)$ and $\mathscr J_1^-(h)$ map $\mathcal F_{\mathrm{fin}}(C_0(X))$ into itself. Hence, for any  $f^{(n)}\in C_0(X^n)$, \eqref{vytjdy6} holds. From here the equality $\mathscr{CP}=\mathscr{OCP}$ can be deduced analogously to the proof of \cite[Theorem~4.1]{BL1}.

\subsection{Proof of Theorem \ref{hfu8fr78}}
We will only prove equality \eqref{bkvgutfgi} as the proof of equality \eqref{vfyter} is similar and simpler. Note also that formula \eqref{ydfyd} will follow from \eqref{adiohdsrtdv}--\eqref{bkvgutfgi}.

It suffices to prove that, for any $h\in C_0(X)$,
$$ \mathbf J_1^-(h)g^{(n)}=\int_X dx\, h(x)\eta \partial_x^\dag \partial_x\partial_x\,g^{(n)},$$
where $g^{(n)}\in\mathbf B^{Q}_0(X^n)$ is of the form
$g^{(n)}=f_1\cd\dotsm \cd f_n$,
with $f_1,\dots,f_n\in B_0(X)$. We have
$$
g^{(n)}(x_1,\dots,x_n):
=\frac1{n!}\sum_{\pi\in\mathfrak S_n} Q_\pi (x_1,\dots,x_n)f_{\pi(1)}(x_1)\dotsm f_{\pi(n)}(x_n).
$$
Hence, by \eqref{igiugygg},
\begin{align}
& \left(\int_X dx\, h(x)\di_x^\dag\di_x\di_x g^{(n)}\right)(x_1,\dots,x_{n-1})\notag\\
& =\Sym_{n-1}\bigg(
\frac1{(n-2)!}\sum_{\pi\in\mathfrak S_n} Q_\pi(x_1,x_1,x_2,\dots,x_{n-1})\notag\\
&\qquad\qquad\qquad\times (hf_{\pi(1)}f_{\pi(2)})(x_1)
f_{\pi(3)}(x_2)\dotsm f_{\pi(n)}(x_{n-1})
\bigg)\notag\\
& =\sum_{1\le i<j\le n}\frac1{(n-2)!}\sum_{\substack{\pi\in\mathfrak S_n\\
\pi\{1,2\}=\{i,j\} }}
\Sym_{n-1}\big(
 Q_\pi(x_1,x_1,x_2,\dots,x_{n-1})\notag\\
&\qquad\qquad\qquad\qquad\qquad\qquad\qquad\qquad\times (hf_{i}f_{j})(x_1)
f_{\pi(3)}(x_2)\dotsm f_{\pi(n)}(x_{n-1})
\big).\label{bigi}
\end{align}
By \eqref{uyr75rw}, for any $\pi\in\mathfrak S_n$ satisfying $\pi\{1,2\}=\{i,j\}$ with $i<j$, and any $(x_1,x_2,\dots,x_{n-1})\in X^{n-1}$, we have
\begin{equation}\label{dyre6e}
 Q_\pi(x_1,x_1,x_2,\dots,x_{n-1})= Q_{\sigma_{ij}(\pi)}(x_1,x_2,\dots,x_{n-1}).
\end{equation}
Here the permutation $\sigma_{ij}(\pi)\in \mathfrak S_{n-1}$ is defined as follows:
$$\sigma_{ij}(\pi)(1):=j,$$
and for $k=2,\dots,n-1$,
$$\sigma_{ij}(\pi)(k):=\begin{cases}
\pi(k+1),&\text{if }\pi(k+1)<i,\\
\pi(k+1)-1,&\text{if }\pi(k+1)>i.
\end{cases}$$
By \eqref{dyre6e}, for any $\pi\in\mathfrak S_n$ satisfying $\pi\{1,2\}=\{i,j\}$ with $i< j$,
\begin{align}
&  Q_\pi(x_1,x_1,x_2,\dots,x_{n-1}) (hf_{i}f_{j})(x_1)
f_{\pi(3)}(x_2)\dotsm f_{\pi(n)}(x_{n-1})\notag\\
&\quad =  Q_{\sigma_{ij}(\pi)}(x_1,x_2,\dots,x_{n-1})
\big(f_1\otimes \dots\otimes f_{i-1}\otimes f_{i+1}\notag\\
&\qquad\otimes\dots\otimes f_{j-1}\otimes (hf_if_j)\otimes f_{j+1}\otimes\dots\otimes f_n \big)(x_{\sigma_{ij}(\pi)^{-1}(1)},\dots, x_{\sigma_{ij}(\pi)^{-1}(n-1)} )\notag\\
&\quad=  \Psi_{\sigma_{ij}(\pi)}\big(f_1\otimes \dots\otimes f_{i-1}\otimes f_{i+1}\notag\\
&\qquad \otimes\dots\otimes f_{j-1}\otimes (hf_if_j)\otimes f_{j+1}\otimes\dots\otimes f_n \big)(x_1,\dots,x_{n-1}).
\notag
\end{align}
Hence, by \eqref{gygfy} and \eqref{gciugfi},
\begin{align}
&\Sym\big( Q_\pi(x_1,x_1,x_2,\dots,x_{n-1}) (hf_{i}f_{j})(x_1)
f_{\pi(3)}(x_2)\dotsm f_{\pi(n)}(x_{n-1})\big)\notag\\
&\quad=(f_1\cd \dotsm \cd f_{i-1}\cd f_{i+1}\cd\dotsm\cd f_{j-1}\cd(hf_if_j)\cd f_{j+1}\cd\dotsm
\cd f_n)(x_1,\dots,x_{n-1}).\label{fcauyfy}
\end{align}
By \eqref{bigi} and \eqref{fcauyfy}, we thus get
\begin{align*}
&\int_X dx\, h(x)\di_x^\dag\di_x\di_x g^{(n)}\\
 &\quad =2\sum_{1\le i<j\le n} f_1\cd \dotsm \cd f_{i-1}\cd f_{i+1}\cd\dotsm\cd f_{j-1}\cd(hf_if_j)\cd f_{j+1}\cd\dotsm
\cd f_n.
\end{align*}
From here equality \eqref{bkvgutfgi} follows.

\begin{center}
{\bf Acknowledgements}\end{center}
M.B. and E.L. acknowledge the financial support of the Polish
National Science Center, grant no.\ Dec-2012/05/B/ST1/00626, and of
the SFB 701 ``Spectral
structures and topological methods in mathematics'', Bielefeld University. MB was partially supported
by the MAESTRO grant DEC-2011/02/A/ST1/00119.

\end{document}